\title{Regular Subgradients of Marginal Functions with Applications to Calculus 
and Bilevel Programming} 
\author{Le Phuoc Hai\thanks{Instituto de Alta Investigaci\'on (IAI), 
Universidad de Tarapac\'a, Arica, Chile. E-mail: lephuochai88@gmail.com.}
\and
Felipe Lara\thanks{Instituto de Alta Investigaci\'on (IAI), Universidad de 
Tarapac\'a, Arica, Chile. E-mail: felipelaraobreque@gmail.com, 
flarao@academicos.uta.cl; web: felipelara.cl. Research of this author was partially
supported by ANID--Chile under project Fondecyt Regular 1241040.}
\and
Boris S. Mordukhovich\thanks{Department of Mathematics, Wayne State 
University, Detroit, Michigan, USA. E-mail: aa1086@wayne.edu.  Research of 
this author was partly supported by the  US National Science Foundation under 
grant DMS-2204519, by the Australian Research Council under Discovery 
Project DP190100555, and by the Project 111 of China under grant D21024.}}
\date{\small \emph{\today}}
\numberwithin{equation}{section}
\newcommand{\R}{\mathbb{R}}
\def\ph{\varphi}
\def\inte{\mathop{\rm int}\nolimits}
\def\Lsup{\mathop{\rm Limsup}\nolimits}
\def\ep{\mathop{\rm epi}\nolimits}
\def\gr{\mathop{\rm gph}\nolimits}
\def\ri{\mathop{\rm ri}\nolimits}
\newtheorem{theorem}{Theorem}[section]
\newtheorem{corollary}[theorem]{Corollary}
\newtheorem{lemma}[theorem]{Lemma}
\newtheorem{example}[theorem]{Example}
\newtheorem{definition}[theorem]{Definition}
\newtheorem{remark}[theorem]{Remark}
\DeclareMathOperator{\dom}{dom}
\DeclareMathOperator{\epi}{epi}
\begin{document}
\maketitle
\begin{abstract}

\noindent {\bf Abstract}. The paper addresses the study and applications of a broad class of extended-real-valued functions, known as optimal value or marginal functions,  which frequently appear in variational analysis, parametric optimization, and a variety of applications. Functions of this type are intrinsically nonsmooth and require the usage of tools of generalized differentiation. The main results of this paper provide novel evaluations and exact calculations of regular/Fr\'echet subgradients and their singular counterparts for general classes of marginal functions via their given data. The obtained results are applied to establishing new calculus rules for such subgradients and necessary optimality conditions in bilevel programming.\\[1ex]
\noindent{\small {\bf Keywords}: Variational analysis, generalized differentiation, marginal functions, calmness, variational calculus, bilevel programming\\[1ex]
{\bf Mathematics Subject Classification (2020)} 49J52, 49J53, 90C31}
\end{abstract}

\medskip

\centerline{Dedicated to Fabi\'an Flores-Baz\'an on the occasion of his 60th Birthday.}

\section{Introduction}\label{intro}
 
Our main interest in this paper is attracted to the class of {\em optimal 
value/marginal functions} given in the form
\begin{equation}\label{1722023_1}
\mu (x):= \inf\big\{\varphi(x,y)\;\big|\; y \in G(x)\big\},
\end{equation}
where $\varphi:X \times Y \rightarrow{\mathbb{\bar R}}$ is a cost/objective 
function defined on the product of normed spaces $X$ and $Y$ and taking values 
in the extended real line $\bar{\mathbb{R}} := (-\infty,\infty]$, and where 
$G : X \rightrightarrows Y$ is a constraint set-valued mapping. The {\em
 solution/argminimum mapping} $M: X \rightrightarrows Y$  associated with
 \eqref{1722023_1} is defined by
\begin{equation}\label{1722023_2}
M(x):=\big\{y \in G(x)\;\big|\;\;\varphi(x, y) = \mu(x)\big\}.
\end{equation}

Over the years, it has been well recognized in variational analysis and optimization 
that the class of marginal functions \eqref{1722023_1} plays a highly important role 
in many aspects of variational theory and applications; see, e.g.,  the books
\cite{DontchevRockafellar,Mor2006,M2018,M24,MN2022,Book_Penot,RW,Thibault} 
with the discussions and references therein. It follows from the definition that
\eqref{1722023_1} can be viewed as the optimal value in the {\em parameterized
problem} of minimizing the cost function $\ph(x,y)$ with respect to the decision 
variable $y$ subject to the parameter-dependent constraint set $G(x)$. Besides 
this obvious interpretation, marginal functions arise in various settings that may 
not be related to optimization; see the books mentioned above. 

A characteristic feature of \eqref{1722023_1} is the {\em intrinsic nondifferentiability} of this function even in the case of smooth objectives and simple convex constraint sets. Therefore, sensitivity analysis of marginal functions requires the usage of suitable machinery of {\em generalized differentiation}, which has been done at some levels for various generalized derivatives under diverse assumptions; see, e.g.,  \cite{GO,KL-2,Mor2006,M2018,MN2022,LK,Book_Penot,Thibault} and the bibliographies therein for more details.

In this paper, we focus on the study and applications of {\em regular/Fr\'echet subgradients} and {\em singular regular subgradients} of marginal functions given in the general form \eqref{1722023_1}. These subdifferential constructions for marginal functions seem to be much less investigated in comparison with the classical subdifferential of convex analysis, the generalized gradient of Clarke, and the limiting and singular subdifferentials of Mordukhovich. Some results for regular subgradients of marginal functions can be found in \cite{HuxuAn,MNY2009} and are discussed in what follows. The results of our paper significantly improve the known ones while providing also completely new conditions for lower and upper estimates and exact calculation formulas for both regular and singular regular subdifferentials of marginal functions in general settings of normed spaces. Moreover, we present applications of the results obtained for marginal functions to deriving new {\em calculus rules} for regular subgradients and their singular counterparts with further developing the {\em marginal function approach} to optimistic {\em bilevel programming}. \vspace*{0.03in}

The rest of the paper is organized as follows. Section~\ref{sec:2} contains some preliminaries from variational analysis and generalized differentiation together with definitions of the major constructions and notions that are largely used to establish our main results in the subsequent sections. 

In Section~\ref{sec:3}, we derive new results on {\em lower and upper estimates} for both regular and singular regular subdifferentials of the general class of marginal functions \eqref{1722023_1} on normed spaces that are expressed via the given data of \eqref{1722023_1}. To the best of our knowledge, the lower subdifferential estimates for marginal functions have never been obtained in the literature outside of equalities. Our results in this direction are established under the alternative assumptions of either the newly defined notion of {\em nearly-isolated calmness} for constraint mappings, or {\em upper Lipschitzian selections} of the solution mapping \eqref{1722023_2}.

Section~\ref{sec:4} establishes {\em exact calculation formulas} for the regular and singular regular subdifferentials of marginal functions on arbitrary normed spaces. Besides the aforementioned assumptions of Section~\ref{sec:3}, the obtained exact subdifferential calculations require either the Fr\'echet differentiability of the cost function, or certain {\em metric qualification conditions} imposed on the initial data of \eqref{1722023_1}. All these assumptions are illustrated by examples. It has been well realized in variational analysis that developing {\em variational calculus rules} is crucial for applications of any subdifferential construction. Not much has been done in this direction for regular subgradients and their singular counterparts. 

Section~\ref{sec:5} is devoted to applications of the subdifferential formulas for marginal functions obtained in Section~\ref{sec:4} to deriving {\em exact/equality type} chain, sum, product, and quotient rules for regular and singular regular subgradients of functions on normed spaces. Some of the calculus results of this type for regular subgradients were obtained in \cite{MNY2006} under the local Lipschitz continuity of the functions involved. Besides establishing novel calculus rules, we systematically replace the Lipschitz continuity assumptions in the results of \cite{MNY2006} by the corresponding {\em calmness} properties, which lead us to refined calculus.  

Section~\ref{sec:bilevel} develops the {\em marginal function approach} to deriving {\em optimality conditions} in problems of {\em optimistic bilevel programming} in {\em Asplund spaces}, which constitutes a broad class of Banach spaces that includes, in particular, all reflexive ones. Involving subdifferentiation of marginal functions and calculus rules for the regular and limiting subdifferentials together with the aforementioned metric qualification conditions, we establish new necessary optimality conditions for local optimal solutions to optimistic bilevel programs in generally non-Lipschitzian settings, which significantly improve those obtained in \cite{Dempe2020} for problems with fully Lipschitzian data in finite-dimensional spaces under additional conditions and more restrictive constraint qualifications.

In Section~\ref{sec:7}, we present concluding remarks and discuss some directions of our future research and applications.\vspace*{-0.15in}

\section{Preliminaries and Basic Definitions}\label{sec:2}

Throughout the paper, we use standard notation of variational analysis and generalized differentiation; see, e.g., \cite{Mor2006}. Given a {\em normed space} $X$ endowed with the norm $\|\cdot\|$, the topologically dual space of $X$ is denoted by $X^*$, where $w^*$ stands for the weak$^*$ topology of $X^*$ and $\langle \cdot,\cdot \rangle$ indicates the canonical pairing between $X$ and $X^*$. The symbols $\mathbb{B} (\bar x, r)$ and 
 ${\mathbb{\overline B}}_{X^*}$ signify the open ball centered at 
 $\bar{ x}$ with radius $r > 0$ and the closed unit ball of $X^*$, respectively. 
 We also use the notations $\mathbb{R}_{+} := [0,\infty)$, 
 $\mathbb{R}_{-} := (-\infty,0]$ and $x \xrightarrow{S} \bar{x}$ when 
 $x \to \bar x$ with $x \in S$.

 For a set-valued mapping $F : X \rightrightarrows X^{*}$, the symbol
\begin{align*}
\Lsup\limits_{x \rightarrow \bar{ x}} \, F(x) := \big\{ x^{*} \in X^{*}\;\big|\;
\exists \, x_k \to \bar x,\,\,\, x^*_k \xrightarrow{w^*} x^* \,\,\, 
\text{with} \,\,\,  x^{*}_k \in F(x_k),\; k \in \mathbb{ N}\big\}
\end{align*}
stands for the (Painlev\'e-Kuratowski) {\em sequential outer limit} of $F$ as 
$x \to\bar x$ with respect to the norm topology of $X$ and the weak$^*$ topology of the dual space $X^{*}$, where $\mathbb{ N}:=\{1,2,\ldots\}$ signifies the collection of natural numbers.
 
Given a set-valued mapping/multifunction $F : X \rightrightarrows Y$ between normed spaces, the {\em domain} and {\em graph} of $F$ are defined by
\begin{equation*}
\dom F :=\big\{x \in X\;\big|\;F(x) \ne \emptyset\big\},\;
\gr F :=\big\{(x, y) \in X \times Y\;\big|\;\;y \in F(x)\big\},
\end{equation*}
respectively. Unless otherwise stated, the {\em norm} on the product space $X\times Y$ is defined by $\|(x, y)\|: = \|x\| + \|y\|$ for any $(x, y) \in X \times Y$. 

Given an extended-real-valued function $f : X \to\mathbb{\bar R}$ on a normed space $X$, its {\em domain} and {\em epigraph} are defined, respectively, by
\begin{equation*}
\dom f:=\big\{x\in X\;\big|\;f(x)\ne\emptyset\}\;\mbox{ and }\;\ep f:=\big\{(x,\alpha)\in X\times\mathbb R\;\big|\;\alpha\ge f(x)\big\}.
\end{equation*}

Following the terminology of \cite{RW}, the (Fr\'echet) {\em regular subdifferential}  $\Hat\partial f(\bar x)$ of $f$ at $\bar x\in\dom f$ is the collection of subgradients $x^*\in X^*$ such that for every 
$\varepsilon > 0$ there exists $\gamma> 0$ satisfying
\begin{align}\label{eq:Fdef}
f(x) - f(\bar{x}) - \langle x^{*}, x - \bar{x} \rangle \geq - \varepsilon 
\|x - \bar{x}\|\;\mbox{ whenever }\;x \in \mathbb{B} (\bar{x},\gamma).
\end{align}
Along with $\widehat\partial f(\bar x)$, we consider also the {\em upper regular subdifferential} of $f\colon X\to\mathbb R\cup\{-\infty\}$ at $\bar x$ with $|f(\bar x)|<\infty$ defined symmetrically as
\begin{equation}\label{upper-sub}
\widehat\partial^+f(\bar x):=-\widehat\partial(-f)(\bar x).
\end{equation}
As well known \cite[Proposition~1.87]{Mor2006}, the sets $\widehat\partial f(\bar x)$ and $\widehat\partial^+f(\bar x)$ are nonempty simultaneously if and only if $f$ is Fr\'echet differentiable at $\bar x$.

The corresponding {\em regular normal cone} to a set $S\subset X$ at $\bar x\in S$ is given by
\begin{align}\label{Fnor}
\widehat N (\bar{x};S) := \Big\{x^* \in X^*\;\Big|\;\limsup\limits_{x 
\xrightarrow{S} \bar{x}} \Big\langle x^*, \dfrac{x - \bar{x}}{\|x - 
\bar{x}\|}\Big\rangle\le 0 \Big\}
\end{align}
with $\widehat N(\bar x;S):=\emptyset$ if $\bar x\notin S$. Note that the regular subdifferential \eqref{eq:Fdef} of $f$ at $\bar x\in S$ can be geometrically expressed via the regular normal cone \eqref{Fnor} by
\begin{align*}
\widehat {\partial} f(\bar{x}) =\big\{ x^* \in X^*\;\big|\;(x^*, - 1) \in 
\widehat {N}\big((\bar{ x}, f(\bar{x}));\ep f\big)\big\}.
\end{align*}
Along with the regular subdifferential \eqref{eq:Fdef}, we consider the {\em singular regular subdifferential} of $f$ at $\bar x\in\dom f$ defined by
\begin{align}\label{eq:Fsin}
\widehat{\partial}^\infty f(\bar{x}) :=\big\{ x^* \in X^*\;\big|\;(x^*,0) \in 
\widehat {N}\big((\bar{ x}, f(\bar{x}));\ep f\big)\big\},
\end{align}
which can be equivalently described as follows: $x^*\in \widehat {\partial}^{\infty}f(\bar{ x})$ if and only if  for any $\varepsilon>0$ there exists $\gamma>0$ such that
\begin{align*}
\langle x^{*}, x - \bar{x} \rangle \le \varepsilon\big(\|x - \bar x\| + 
\lvert \beta - f(x) \rvert \big)\;\mbox{ whenever }\;\big(x, \beta) \in \ep f \cap \mathbb B 
 ((\bar x, f (\bar{x})),\gamma\big).
\end{align*} 
For a set $S\subset X $ with its {\em indicator function} $\delta_S\colon X\to\mathbb{R}$ defined by $\delta_S(x):=0$ if $x\in S$ and $\delta_S(x):=\infty$ if $x\notin S$, we have the relationships
\begin{align*}
\widehat N (\bar{ x};S) = \widehat{\partial}\delta_S (\bar{x}) = \widehat {\partial}^{\infty} 
\delta_S (\bar{x})\;\mbox{ for all }\;\bar x\in S.
\end{align*}

Now we define, following the scheme from \cite{Mor1980}, a counterpart of the regular constructions \eqref{eq:Fdef} and \eqref{Fnor} for set-valued mappings between normed spaces. The {\em regular coderivative} of $F : X \rightrightarrows Y$ at $(\bar x,\bar y)\in X\times Y$ is the multifunction $\widehat {D}^{*} F (\bar{x}, \bar{y})\colon Y^{*} 
\rightrightarrows X^{*}$ with the values
\begin{align}\label{cod}
\widehat {D}^{*} F(\bar{x}, \bar{y}) (y^{*}) :=\big\{x^{*} \in X^{*}\;\big|\;
(x^{*}, - y^{*}) \in \widehat N\big((\bar{x}, \bar{y});\gr F\big)\}
\end{align}
if $(\bar x,\bar y)\in\gr F$, and $\widehat {D}^{*} F(\bar{x}, \bar{y}) (y^*):= \emptyset$ if 
$(\bar{x}, \bar{y}) \notin \gr F$. We omit  $\bar y$ in the coderivative notion if $F(\bar x)$ is a singleton. Observe that
\begin{align*}
\widehat {D}^{*} F(\bar{x}) (y^*) =\big\{\nabla F(\bar{ x})^{*}y^{*}\big\}\;\mbox{ for all }\;y^*\in Y^*
\end{align*}
if $F$ is Fr\'echet differentiable at $\bar{x}$ via the adjoint derivative operator $\nabla F(\bar{ x})^{*}$.\vspace*{0.05in}

Recall that a mapping $h:D\subset X \to Y$ is {\em upper Lipschitzian} \cite{Robinson} at $\bar x\in D$ or {\em calm} \cite{RW} at this point if there exist numbers $r>0$ and $\ell\ge 0$
such that 
\begin{align}\label{calm}
\|h(x) - h(\bar{ x})\| \le\ell\|x-\bar{ x}\|\;\mbox{ for all }\;x \in \mathbb{B} 
(\bar{x}, r)\cap D.
\end{align} 
This notion is definitely weaker than the the Lipschitz continuity of $h$ around $\bar x$ as is illustrated by the simple example of $h:\mathbb{R} 
 \to\mathbb{ R}$ given by $h(x) := x^{3/2} \sin(1/x)$ for all $x \in 
 \mathbb{R} \setminus \{0\}$ and $h(0) := 0$. This function is calm at $\bar x=0$ while not Lipschitz continuous around this point.
 
 It is not difficult to deduce from the definitions (see, e.g., the proof of \cite[Proposition~3.5]{MNY2006}) that the calmness of $h\colon X\to Y$ at $\bar x$ yields the {\em regular coderivative scalarization formula}
\begin{equation}\label{scal}
\widehat D^* h(\bar{ x})(y^*)=\widehat {\partial} \langle 
 y^*, h\rangle (\bar{ x}) \;\mbox{ for all }\;y^*\in Y^* 
\end{equation}
in any normed spaces $X$ and $Y$. 

Given further a set-valued mapping $F: X \rightrightarrows Y$ between normed spaces, we say as in \cite{DontchevRockafellar} that $F$ enjoys the {\em isolated calmness} property at $(\bar{x}, \bar{y})\in \gr F$  with modulus $L> 0$ if there exists $\gamma>0$ such that
\begin{equation}\label{i-calm}
F(u) \cap \mathbb{B}_{Y}(\bar{y}, \gamma ) \subset \{\bar{y}\} + L 
\|u-\bar{x}\|{\mathbb{\overline B}}_{Y}\;\mbox{ for all }\;u \in \mathbb{B}_{X} 
(\bar{x}, \gamma).
\end{equation}

Next we formulate and discuss certain {\em metric qualification conditions} the importance of which has been well realized in different aspects of variational analysis, generalized differential calculus, and optimization theory; see below.

\begin{definition}[\bf metric qualification conditions]\label{mqc} {\rm
 Let $C,D$ be subsets of a normed space $X$, and let $\bar{x} \in C \cap D$. 
It is said that these sets satisfy:
\begin{description}
\item[\bf(i)]  The {\em metric qualification condition \eqref{Q1}} at $\bar{x}$ if there exist $a, r > 0$ such that
\begin{align}\label{Q1}
d({x}, C \cap D) \le ad({ x},C) + ad({ x}, D)\;\mbox{ for all }\;x \in \mathbb{B} 
(\bar{x}, r), \tag{$Q_1$}
\end{align}
where $d(x,S)$ stands for the distance from $x$ to the set $S$.
		
\item[\bf(ii)] The {\em metric qualification condition \eqref{Q2}} at $\bar{x}$ with respect to some subdifferential $\partial^{\textasteriskcentered}$ if there exists $a > 0$ such that
\begin{align}\label{Q2}
\partial^{\textasteriskcentered} d(\bar{x}, C \cap D) \subset a \partial^{\textasteriskcentered} d(\bar { x},C) + a \partial^{\textasteriskcentered} 
d(\bar{x}, D). \tag{$Q_2$}
\end{align}
\end{description}}
\end{definition}

The terminology of Definition~\ref{mqc} is taken from \cite{HuxuAn} while 
such conditions are traced back to \cite{Ioffe} in the case of \eqref{Q1} and to 
\cite{jour} in the case of \eqref{Q2}. Observe that condition 
\eqref{Q2}, in contrast to \eqref{Q1}, depends on the choice of the 
subdifferential $\partial^{\textasteriskcentered}$. In \cite{HuxuAn,Ioffe,jour,NgaiThera,Book_Penot,Thibault} among others, the reader 
can find sufficient conditions for the fulfillment of \eqref{Q2} with respect to 
different subdifferentials and their applications. On the other hand, the 
subgradient-free condition \eqref{Q1}, which is easily to deal with, yields the 
fulfillment of \eqref{Q2} in some important subgradient settings; see, e.g., 
\cite{HuxuAn} and the references therein. Furthermore, note that the proper 
\eqref{Q1}  is also called subtransversality, linear regularity, or linear coherence 
in the literature. \vspace*{0.05in} 

We conclude this section with three lemmas broadly used in what follows. The 
first one, taken from \cite[Corollary~1.96]{Mor2006}, provides relationships 
between regular normals to sets and regular subgradients of the associated 
distance functions.

\begin{lemma}[\bf normals to sets and subgradients of distance  functions]\label{lem2722023_1} $\,$ Let $S \subset X$ be a nonempty subset 
of a normed space. Then for every $\bar x\in S$ we have the regular 
normal-subgradient relationships
\begin{align*}
\widehat{\partial} d(\cdot,S) (\bar{ x}) = \mathbb{B}_{X^{*}} 
\cap{\widehat N} (\bar{x}; S)\;\mbox{ and }\;
{\widehat N} (\bar{x}, S) = \bigcup_{\lambda>0}\lambda\widehat{\partial} d(\cdot, S) (\bar{x}). 
\end{align*}
\end{lemma}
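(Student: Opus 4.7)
The plan is to establish the two relationships independently, with the first providing the key tool for the second.

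For the first equality $\widehat{\partial} d(\cdot,S)(\bar x) = \mathbb{B}_{X^*} \cap \widehat N(\bar x; S)$, I would prove two inclusions. For the forward inclusion, take $x^* \in \widehat\partial d(\cdot,S)(\bar x)$ and use $d(\bar x,S)=0$ in the defining inequality \eqref{eq:Fdef}. Restricting $x$ to lie in $S$ makes $d(x,S)=0$, which immediately yields the normal cone inequality $\langle x^*,x-\bar x\rangle \le \varepsilon\|x-\bar x\|$, hence $x^* \in \widehat N(\bar x;S)$. To get $\|x^*\|\le 1$, I would exploit the $1$-Lipschitz property of the distance function: for arbitrary $x$ near $\bar x$, $d(x,S)\le \|x-\bar x\|$, and combining this with \eqref{eq:Fdef} gives $\langle x^*,x-\bar x\rangle \le (1+\varepsilon)\|x-\bar x\|$; letting $\varepsilon\downarrow 0$ yields the norm bound.

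For the reverse inclusion, fix $x^*\in \mathbb{B}_{X^*}\cap\widehat N(\bar x;S)$ and $\varepsilon>0$. Choose $\gamma>0$ from the definition of $\widehat N(\bar x;S)$ with tolerance $\varepsilon/2$. For any $x$ close to $\bar x$ and any small $\delta>0$, pick $s\in S$ with $\|x-s\|\le d(x,S)+\delta$. A routine triangle-inequality estimate shows $s$ stays inside $\mathbb{B}(\bar x,\gamma)$ for $x$ sufficiently close to $\bar x$, so the regular normal inequality applies at $s$. Then splitting $\langle x^*,x-\bar x\rangle = \langle x^*,x-s\rangle + \langle x^*,s-\bar x\rangle$ and bounding the first term by $\|x-s\|$ (since $\|x^*\|\le 1$) and the second by $\frac{\varepsilon}{2}\|s-\bar x\|$ produces, after sending $\delta\downarrow 0$, the desired inequality $d(x,S)-\langle x^*,x-\bar x\rangle \ge -\varepsilon\|x-\bar x\|$, so $x^*\in \widehat\partial d(\cdot,S)(\bar x)$.

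For the second equality, the inclusion $\bigcup_{\lambda>0}\lambda\widehat\partial d(\cdot,S)(\bar x) \subset \widehat N(\bar x;S)$ follows from the first equality together with the fact that $\widehat N(\bar x;S)$ is a cone. The opposite inclusion is handled by scaling: given $x^*\in \widehat N(\bar x;S)$, pick $\lambda >0$ with $\lambda \ge \|x^*\|$, so that $x^*/\lambda$ lies in $\mathbb{B}_{X^*}\cap \widehat N(\bar x;S) = \widehat\partial d(\cdot,S)(\bar x)$, and then $x^* = \lambda(x^*/\lambda)$ belongs to the right-hand side.

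The main technical obstacle is the reverse inclusion in the first equality, where the projection-like choice of $s\in S$ must be coordinated with the two tolerances (the $\varepsilon$ from the normal cone definition and the auxiliary $\delta$ from the approximate projection) so that both the normal cone inequality applies at $s$ and the remainder terms vanish in the limit.
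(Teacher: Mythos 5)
Your proof is correct. Note that the paper itself does not prove this lemma at all: it is quoted verbatim from \cite[Corollary~1.96]{Mor2006}, so there is no in-paper argument to compare against. Your self-contained proof is the standard one behind that citation: the forward inclusion of the first equality uses the vanishing of $d(\cdot,S)$ on $S$ plus the $1$-Lipschitz bound $d(x,S)\le\|x-\bar x\|$ to get both the normal-cone inequality and $\|x^*\|\le 1$; the reverse inclusion via approximate projections $s\in S$ with $\|x-s\|\le d(x,S)+\delta$ works as you describe, since $\|s-\bar x\|\le 2\|x-\bar x\|+\delta$ keeps $s$ in the ball where the $\varepsilon/2$-normal inequality applies and the $\delta$-terms vanish in the limit; and the second equality then follows from the conic nature of $\widehat N(\bar x;S)$ together with the scaling $x^*/\lambda$ for $\lambda\ge\|x^*\|$ (the case $x^*=0$ being trivial). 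All steps are sound, so the only remark is bibliographic rather than mathematical: in the paper this statement is an imported lemma, and your argument in effect reproves the cited result.
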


The next result is a consequence of Lemma~\ref{lem2722023_1} for the epigraphical set $S:=\epi f$ with the replacement of  $\bar{ x}$ by $(\bar{ x}, f(\bar{ x}))$.

\begin{lemma}[\bf normals and subgradients for epigraphs]\label{12202023_lem2} $\,$Let $f\colon X\to\Bar{\mathbb R}$ be defined on a normed space, and let $\bar{x}\in S$. Then we have
\begin{align*}
\widehat{N}\big((\bar x,f(\bar x); \ep f)\big) = \bigcup_{\lambda>0} 
\lambda\widehat{\partial} d(\cdot, \ep f)\big(\bar x,f(\bar x)\big).
\end{align*}
\end{lemma}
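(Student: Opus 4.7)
The plan is to recognize this statement as nothing more than a direct specialization of the preceding Lemma~\ref{lem2722023_1} to the epigraphical set. Concretely, I would apply Lemma~\ref{lem2722023_1} in the product normed space $X\times\mathbb R$ (which, as fixed in Section~\ref{sec:2}, is equipped with the sum norm $\|(x,\alpha)\|=\|x\|+|\alpha|$) with the choice $S:=\ep f$ and the base point $(\bar x,f(\bar x))\in\ep f$.

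First, I would briefly verify that $(\bar x, f(\bar x))$ does belong to $\ep f$ under the implicit hypothesis that $f(\bar x)$ is finite (which is what the displayed identity tacitly requires, in line with the convention used throughout Section~\ref{sec:2}): this follows at once from the definition of the epigraph since $f(\bar x)\ge f(\bar x)$. Next, I would simply substitute $S\mapsto\ep f$ and $\bar x\mapsto(\bar x,f(\bar x))$ into the second equality of Lemma~\ref{lem2722023_1}, which reads ${\widehat N}(\bar x;S)=\bigcup_{\lambda>0}\lambda\,\widehat\partial d(\cdot,S)(\bar x)$, to obtain
\[
\widehat N\bigl((\bar x,f(\bar x));\ep f\bigr)=\bigcup_{\lambda>0}\lambda\,\widehat\partial d\bigl(\cdot,\ep f\bigr)\bigl(\bar x,f(\bar x)\bigr),
\]
which is exactly the claim.

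No obstacle is anticipated, since Lemma~\ref{lem2722023_1} is stated for an arbitrary normed space and arbitrary nonempty set, so it applies verbatim to the pair $(X\times\mathbb R,\ep f)$. The only mild point worth noting is that the regular normal cone and regular subdifferential on the right-hand side are computed with respect to the product-norm topology on $X\times\mathbb R$, but this is precisely the setting built into the definitions \eqref{Fnor} and \eqref{eq:Fdef} via the convention on $\|(x,y)\|$ fixed in Section~\ref{sec:2}. Hence the proof consists of one line of invocation and one line of substitution.
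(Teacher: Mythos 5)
Your proof is correct and is essentially identical to the paper's own argument: the paper also obtains this lemma as a direct consequence of Lemma~\ref{lem2722023_1} applied to $S:=\ep f$ in the product space with the base point $(\bar x,f(\bar x))$. Your added remarks (finiteness of $f(\bar x)$ and the product norm convention) are appropriate but not a departure from the paper's route.
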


The last lemma, taken from \cite[Proposition~1.88]{Mor2006}, provides a smooth variational description of regular subgradients for general functions on normed spaces.

\begin{lemma}[\bf smooth variational description of regular subgradients]\label{12202023_lem} $\,$
Let $f\colon X\to\Bar{\mathbb R}$ be defined on a normed space $X$, and let $\bar{x}\in\dom f$. Then the inclusion $x^*\in\widehat\partial f(\bar x)$ for regular subgradients is equivalent to the existence of a neighborhood $U$ of $\bar x$ and a function $s\colon U\to\mathbb R$ such that $s(\cdot)$ is Fr\'echet differentiable at $\bar x$ with $\nabla s(\bar x)=x^*$, that $s(\bar{ x}) = f (\bar x)$, and that $s(x) \le f(x)$ for all $x\in U$.
\end{lemma}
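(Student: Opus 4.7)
The plan is to prove the equivalence in two directions, relying directly on the definition \eqref{eq:Fdef} of the regular subdifferential.

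For the sufficiency direction, assume the existence of a neighborhood $U$ of $\bar x$ and a function $s$ as described. Fr\'echet differentiability of $s$ at $\bar x$ furnishes a (unique) $x^{*}\in X^{*}$ satisfying $s(x) - s(\bar x) - \langle x^{*}, x-\bar x\rangle = o(\|x-\bar x\|)$ as $x\to\bar x$. Combining this with $s(\bar x) = f(\bar x)$ and $s(x)\le f(x)$ on $U$ yields
\begin{equation*}
f(x) - f(\bar x) - \langle x^{*}, x-\bar x\rangle \ge s(x) - s(\bar x) - \langle x^{*}, x-\bar x\rangle = o(\|x-\bar x\|),
\end{equation*}
which is precisely condition \eqref{eq:Fdef}; hence $x^{*}\in\widehat\partial f(\bar x)$.

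For the necessity direction, given $x^{*}\in\widehat\partial f(\bar x)$, I would extract from \eqref{eq:Fdef} a sequence $\varepsilon_{k}\downarrow 0$ and matching radii $\gamma_{k}\downarrow 0$ (strictly decreasing after thinning) such that $f(x) - f(\bar x) - \langle x^{*}, x-\bar x\rangle \ge -\varepsilon_{k}\|x-\bar x\|$ whenever $x\in \mathbb{B}(\bar x,\gamma_{k})$. The central step is to construct a nondecreasing concave function $\theta\colon [0,\infty)\to[0,\infty)$ with $\theta(0)=0$, right-derivative $\theta'_{+}(0)=0$, and $\theta(t)\ge \varepsilon_{k}t$ on $[0,\gamma_{k}]$ for every $k$. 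This can be done by piecewise-linear patching on the intervals $[\gamma_{k+1},\gamma_{k}]$ with slopes selected to be strictly decreasing to zero; concavity is ensured by passing to a further subsequence of $(\varepsilon_{k},\gamma_{k})$ if necessary. Setting
\begin{equation*}
s(x) := f(\bar x) + \langle x^{*}, x-\bar x\rangle - \theta(\|x-\bar x\|),\qquad x\in U:=\mathbb{B}(\bar x,\gamma_{1}),
\end{equation*}
the identity $s(\bar x) = f(\bar x)$ is immediate, while $s\le f$ on $U$ follows from the chosen bounds on $\theta$; moreover $\theta(t)=o(t)$ as $t\downarrow 0$ forces $s$ to be Fr\'echet differentiable at $\bar x$ with derivative $x^{*}$.

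The main technical obstacle is the explicit construction of the \emph{concave} majorant $\theta$ with the required tangency at the origin, since one simultaneously needs the pointwise bounds $\theta(t)\ge \varepsilon_{k}t$ on $[0,\gamma_{k}]$ and the limit $\theta(t)/t\to 0$ as $t\downarrow 0$. Once $\theta$ is in hand, the Fr\'echet differentiability of the composite $x\mapsto\theta(\|x-\bar x\|)$ at $\bar x$ follows from this limit alone and does not require any smoothness of the norm, which is crucial for the construction to work in arbitrary normed spaces.
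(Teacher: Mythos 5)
Your sufficiency argument is correct, and it is also the right reading of the lemma (as the paper uses it) to attach the requirement $\nabla s(\bar x)=x^*$ to the function $s$. The gap is in the necessity direction: the auxiliary function $\theta$ with the properties you list cannot exist. First, as literally stated, the requirement that $\theta(t)\ge\varepsilon_k t$ on all of $[0,\gamma_k]$ for every $k$ includes $k=1$, so $\theta(t)/t\ge\varepsilon_1>0$ for all small $t>0$, contradicting $\theta'_+(0)=0$; what is actually needed is only the annulus-wise bound $\theta(t)\ge\varepsilon_k t$ for $t\in(\gamma_{k+1},\gamma_k]$ (up to the usual care with the open balls in \eqref{eq:Fdef}). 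Second, and more seriously, concavity is incompatible with the tangency at the origin: if $\theta$ is concave with $\theta(0)=0$, then $t\mapsto\theta(t)/t$ is nonincreasing, so $\theta'_+(0)=\lim_{t\downarrow 0}\theta(t)/t=\sup_{t>0}\theta(t)/t$, and $\theta'_+(0)=0$ forces $\theta\le 0$, which contradicts any bound $\theta(t)\ge\varepsilon_k t>0$. (Patching linear pieces whose slopes decrease to zero as one approaches the origin produces a convex, not concave, profile near $0$.)

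The fix is that the obstacle you single out as the main difficulty is unnecessary: as you observe yourself, Fr\'echet differentiability of $x\mapsto\theta(\|x-\bar x\|)$ at $\bar x$ with zero derivative requires nothing beyond $\theta(0)=0$ and $\theta(t)=o(t)$, so no shape constraint on $\theta$ should be imposed. For instance, take $\theta(t):=\sup\big\{\max\{f(\bar x)+\langle x^*,u-\bar x\rangle-f(u),0\}\;\big|\;\|u-\bar x\|\le t\big\}$ for $t<\gamma_1$: it vanishes at $0$, satisfies $\theta(t)\le\varepsilon_k t$ whenever $t<\gamma_k$ by \eqref{eq:Fdef}, hence is $o(t)$, and taking $u=x$ in the supremum gives $s(x):=f(\bar x)+\langle x^*,x-\bar x\rangle-\theta(\|x-\bar x\|)\le f(x)$ on $U:=\mathbb{B}(\bar x,\gamma_1)$. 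Even shorter is the standard envelope $s(x):=\min\{f(x),\,f(\bar x)+\langle x^*,x-\bar x\rangle\}$, for which $s\le f$, $s(\bar x)=f(\bar x)$, and $0\ge s(x)-f(\bar x)-\langle x^*,x-\bar x\rangle\ge-\varepsilon\|x-\bar x\|$ near $\bar x$, so $s$ is Fr\'echet differentiable at $\bar x$ with derivative $x^*$; this is essentially the argument behind \cite[Proposition~1.88]{Mor2006}, which the paper cites in place of a proof.
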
\vspace*{-0.15in}

\section{Estimates for Regular and Singular Regular Subgradients of Marginal Functions}\label{sec:3}

In this section, we establish lower and upper estimates for regular subgradients and their singular counterparts of general marginal functions \eqref{1722023_1}. Without further mentioning, all the spaces under consideration are {\em arbitrarily normed}. 

The lower estimates for both regular and singular regular subgradients of marginal functions, new in the literature, are obtained below under the alternative conditions formulated in the following two definitions. \vspace*{0.05in}

The first notion, used in \cite{MNY2009} to derive equalities for regular (while not singular regular) subgradients of marginal functions, employs the upper Lipschitzian/calmness property of single-valued mappings defined in \eqref{calm}.

\begin{definition}[\bf upper Lipschitzian selections]\label{upper-lip} {\rm 
 A set-valued mapping $F : D \rightrightarrows Y$ defined on some set 
 $D \subset X$, admits an {\em upper Lipschitzian selection} at 
 $(\bar x, \bar y) \in \gr F$ if there exists a single-valued mapping $h: D \to  Y$, 
 which is calm at $\bar{ x}$ satisfying $h(\bar x) = \bar{y}$ and 
 $h(x) \in  F(x)$ for all $x \in D$ in a neighborhood of $\bar{ x}$.}
\end{definition}

The next definition introduces two new well-posedness properties of set-valued mappings that are related to isolated calmness in \eqref{i-calm}.
 
\begin{definition}[\bf nearly-isolated calmness]\label{df1102024_1}
{\rm Given a set-valued mapping $F: X \rightrightarrows Y$ with $(\bar{x}, \bar{y}) 
\in \gr F$, consider the following properties:
\begin{description}

\item[\bf(i)] $F$ is {\em locally nearly-isolated calm} at $(\bar{x}, \bar{y})$ with 
modulus $L>0$ if there exists $\gamma>0$ such that 
\begin{align}\label{eq:lnstrong}
F(u) \cap \mathbb{B}_{Y}(\bar{y},\gamma) \subset\{\bar{y}\} + L 
\|u - \bar{x}\| {\mathbb{\overline B}}_{Y}\;\mbox{ for all }\;u \in \mathbb{B}_{X} 
(\bar{x},\gamma)\setminus\{\bar x\}.
\end{align}

\item[\bf(ii)] $F$ is {\em semilocally nearly-isolated calm} at $(\bar{x}, \bar{y})$ with 
modulus $L>0$ if there exists $\gamma>0$ such that 
\begin{align}\label{eq:snstrong}
F(u) \subset \{\bar{y}\} + L \|u - \bar{x}\| {\mathbb{\overline B}}_{Y}\;\mbox{
for all }\;u \in \mathbb{B}_{X} (\bar{x}, \gamma) \setminus \{\bar x\}.
\end{align}
\end{description}}
\end{definition}

Clearly, the introduced properties are weaker than the isolated calmness \eqref{i-calm} even in simple examples as for $F\colon\mathbb R\rightrightarrows\mathbb{ R}$ 
defined by $F(x):=\R$ if $x=0$ and $F(x):=\emptyset$. This mapping satisfies \eqref{eq:lnstrong} but not \eqref{i-calm} at $(0,0)$. It is important to note that 
property \eqref{eq:snstrong} is a stronger condition than property \eqref{eq:lnstrong}.
For instance, consider  \( F \colon \mathbb{R} \rightrightarrows \mathbb{R} \) 
defined by \( F(x) = \{0\} \) if \( x = 0 \) and \( F(x) = \{1\} \). This mapping 
satisfies condition \eqref{eq:lnstrong}, but it does not satisfy condition
\eqref{eq:snstrong} at the point \( (0,0) \).

Besides,  the emergence of the notion of semilocal near-isolated calmness is quite 
natural because this notion is indeed connected to the notion of upper Lipschitz multifunctions, as mentioned in \cite{Zhangtreiman}. Semilocal near-isolated calmness represents a variant of upper Lipschitz continuity for a multifunction. Furthermore, the authors \cite{Zhangtreiman} have already provided criteria, similar to the Levy-Rockafellar criterion for isolated calmness (see \cite{DontchevRockafellar}), to verify this property using the so-called outer derivative of a multifunction (see \cite[Theorem 3.2]{Zhangtreiman})  and the contingent derivative of the multifunction (see \cite[Propositions 3.4, Corollaries 3.6 and 3.7]{Zhangtreiman}). 
\vspace*{0.05in}

Our first theorem presents both lower and upper estimates of {\em regular subgradients} \eqref{eq:Fdef} of marginal functions in the general normed space setting. 

\begin{theorem}[\bf estimates for regular subgradients]\label{thm1}
Let the marginal function $\mu(\cdot)$ in \eqref{1722023_1} be finite 
 at some $\bar{x} \in \dom M$ from \eqref{1722023_2}, and let $\bar{y} \in M(\bar{x})$. The
 following assertions are satisfied:  
 \begin{description}
\item[\bf(i)] If $\widehat{\partial}^{+} \varphi (\bar{x}, \bar{y})\ne \emptyset$ for the upper regular subdifferential \eqref{upper-sub}, then 
\begin{align}\label{1722023_3}
 \widehat{\partial} \mu(\bar{x}) \subset \bigcap_{(x^{\ast}, y^{\ast}) \in 
 \widehat{\partial}^{+} \varphi (\bar{x}, \bar{y})}\big[x^{\ast} + \widehat{D}^* 
 G(\bar{x}, \bar{y}) (y^{\ast})\big].
 \end{align}
	
\item[\bf(ii)] We have the lower subgradient estimate
 \begin{align}\label{1722023_4}
\widehat{\partial} \mu (\bar{x})\supset\bigcup_{(x^{\ast}, y^{\ast}) \in \widehat{\partial} \varphi(\bar{x}, \bar{y})}\big[x^{\ast} + \widehat {D}^* G (\bar{x}, \bar{y}) (y^{\ast})\big]  
\end{align}
\end{description}
provided that at least one of the conditions below holds:
\begin{description}
\item[\bf(a)] $G$ is semilocally nearly-isolated calm at $(\bar{x},\bar{y})$.

\item[\bf(b)] $G$ is locally nearly-isolated calm at $(\bar{x},\bar{y})$, and  for every $\varepsilon > 0$ there exists $\gamma > 0$ such that
\begin{align}\label{H}
\mu(x) = \inf_{y \in G(x)} \varphi (x, y) = \inf_{y \in G(x) \cap \mathbb{B} 
(\bar{y}, \varepsilon)} \varphi (x, y)\;\mbox{ for  }\;x \in \mathbb{B} 
(\bar{x},\gamma).
 \end{align}

\item[\bf(c)] The solution mapping $M : \dom G\rightrightarrows Y$ admits an upper Lipschitzian selection  at $(\bar x, \bar y)$.
\end{description}
\end{theorem}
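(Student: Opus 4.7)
The plan is to work directly from the definitions of the regular subdifferentials and the regular coderivative of $G$, using the elementary observation $\mu(x)\le\varphi(x,y)$ for $y\in G(x)$ together with $\mu(\bar x)=\varphi(\bar x,\bar y)$ to transfer inequalities between $\varphi$ and $\mu$.

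For assertion (i), I would fix $x^{*}\in\widehat\partial\mu(\bar x)$ and any $(u^{*},v^{*})\in\widehat\partial^{+}\varphi(\bar x,\bar y)$. The subgradient inequality for $\mu$ gives a lower bound on $\mu(x)-\mu(\bar x)$ by $\langle x^{*},x-\bar x\rangle$ up to $-\varepsilon\|x-\bar x\|$, while the defining inequality for the upper regular subgradient (i.e.\ a regular subgradient of $-\varphi$, rewritten) gives an upper bound $\varphi(x,y)-\varphi(\bar x,\bar y)\le\langle u^{*},x-\bar x\rangle+\langle v^{*},y-\bar y\rangle+\varepsilon(\|x-\bar x\|+\|y-\bar y\|)$ on $\gr G$ near $(\bar x,\bar y)$. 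Chaining these via $\mu(x)-\mu(\bar x)\le\varphi(x,y)-\varphi(\bar x,\bar y)$ yields
\[
\langle x^{*}-u^{*},x-\bar x\rangle-\langle v^{*},y-\bar y\rangle\le 2\varepsilon\big(\|x-\bar x\|+\|y-\bar y\|\big)\quad\text{on }\gr G,
\]
which is exactly the regular-normal condition $(x^{*}-u^{*},-v^{*})\in\widehat N((\bar x,\bar y);\gr G)$, i.e.\ $x^{*}-u^{*}\in\widehat D^{*}G(\bar x,\bar y)(v^{*})$. This proves \eqref{1722023_3}.

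For assertion (ii), I would take $(u^{*},v^{*})\in\widehat\partial\varphi(\bar x,\bar y)$ and $p^{*}\in\widehat D^{*}G(\bar x,\bar y)(v^{*})$ and aim to verify the defining inequality of $\widehat\partial\mu(\bar x)$ for $x^{*}:=u^{*}+p^{*}$. The regular-subgradient bound on $\varphi$ gives, for $y\in G(x)$ near $(\bar x,\bar y)$,
\[
\varphi(x,y)-\varphi(\bar x,\bar y)\ge\langle u^{*},x-\bar x\rangle+\langle v^{*},y-\bar y\rangle-\varepsilon\big(\|x-\bar x\|+\|y-\bar y\|\big),
\]
while the coderivative condition yields $\langle v^{*},y-\bar y\rangle\ge\langle p^{*},x-\bar x\rangle-\varepsilon(\|x-\bar x\|+\|y-\bar y\|)$ on $\gr G$. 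Adding these and using $\mu(\bar x)=\varphi(\bar x,\bar y)$ together with $\mu(x)\le\varphi(x,y)$ produces
\[
\varphi(x,y)-\mu(\bar x)\ge\langle u^{*}+p^{*},x-\bar x\rangle-2\varepsilon\big(\|x-\bar x\|+\|y-\bar y\|\big).
\]
The main obstacle is then to dominate $\|y-\bar y\|$ by a multiple of $\|x-\bar x\|$, after which taking the infimum over the appropriate $y\in G(x)$ and letting $\varepsilon\downarrow 0$ closes the argument.

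Each of the three alternative assumptions supplies exactly this domination. Under (a), semilocal nearly-isolated calmness of $G$ forces every $y\in G(x)$ with $x\in\mathbb B(\bar x,\gamma)\setminus\{\bar x\}$ to satisfy $\|y-\bar y\|\le L\|x-\bar x\|$, so the infimum over all $y\in G(x)$ is estimated at once and the trivial case $x=\bar x$ is automatic. Under (b), local nearly-isolated calmness provides the same linear bound but only on $G(x)\cap\mathbb B(\bar y,\gamma)$; condition \eqref{H} is precisely what allows me to restrict the infimum in $\mu(x)$ to this neighborhood, so the same conclusion follows. Under (c), I substitute the selection $y=h(x)$, noting that $h(x)\in M(x)$ gives $\mu(x)=\varphi(x,h(x))$ near $\bar x$ and calmness of $h$ at $\bar x$ with $h(\bar x)=\bar y$ gives $\|h(x)-\bar y\|\le\ell\|x-\bar x\|$; the evaluated inequality becomes the desired bound on $\mu(x)-\mu(\bar x)$ directly, with no need for an infimum. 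In all three cases one arrives at $\mu(x)-\mu(\bar x)-\langle u^{*}+p^{*},x-\bar x\rangle\ge -\varepsilon'\|x-\bar x\|$ on a neighborhood of $\bar x$, giving $u^{*}+p^{*}\in\widehat\partial\mu(\bar x)$ and hence \eqref{1722023_4}.
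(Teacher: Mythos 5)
Your proposal is correct and follows essentially the same route as the paper: chain the regular (upper) subgradient inequality for $\varphi$ with the regular normal-cone inequality for $\gr G$, then use (a), (b) with \eqref{H}, or (c) to dominate $\|y-\bar y\|$ by a multiple of $\|x-\bar x\|$ before passing to the infimum over $y\in G(x)$. The only cosmetic differences are that you prove (i) directly instead of citing the known result, and you use the $\varepsilon$-form of the definition of $\widehat N((\bar x,\bar y);\gr G)$ where the paper invokes the smooth variational description of regular subgradients, which is an equivalent step.
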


\begin{proof} The upper estimate in (i) is taken \cite[Theorem 1]{MNY2009} formulated there in the Banach space setting, while the given proof does not require the completeness of the normed spaces in question. Let us now verify the lower estimate of regular subgradients in (ii) in all the three cases therein.\vspace*{0.05in}

\noindent {$\bullet$} {\em Case~{\rm (a)}, where $G$ is semilocally nearly-isolated calm at 
$(\bar{x},\bar{y})$ with modulus $L$.} Fix an arbitrary number $\varepsilon > 0$ and pick
$u^{\ast}$ in the right-hand side of (\ref{1722023_4}). Then there 
exists ${(x^{\ast}, y^{\ast}) \in \widehat{\partial} \varphi (\bar{x}, 
\bar{y})}$ such that $u^{\ast}-{x}^{\ast}\in \widehat {D}^* 
G(\bar{x}, \bar{y}) (y^{\ast})$, and hence $(u^{\ast}-{x}^{\ast}, - y^{\ast}) 
\in \widehat{N} ( (\bar{x}, \bar{y});\gr G)$. The inclusion ${(x^{\ast}, y^{\ast}) 
\in \widehat{\partial} \varphi (\bar{x}, \bar{y})}$ allows us to 
choose $\delta_1 > 0$ so that for every $(u,v) \in \mathbb{B} 
((\bar{x}, \bar{y}), \delta_1)$ we get
\begin{equation}\label{1712023_6}
\langle x^{\ast}, u-\bar{x}\rangle+\langle y^{\ast}, v-\bar{y} 
\rangle\le \varphi(u,v)-\varphi(\bar{x},\bar{y})+\varepsilon 
(\|u-\bar{x}\|+\|v-\bar{y}\|).
\end{equation}
By Lemma~\ref{12202023_lem}, there exists a function $s: X \times Y \rightarrow \mathbb{R}$ Fr\'echet differentiable at $(\bar x,\bar y)$ with $\nabla s(\bar{x}, \bar{y}) 
= (u^{\ast} - x^{\ast}, - y^{\ast})$ and  
\begin{equation}\label{for:3-1}
 s(u, v) \le s(\bar{x}, \bar{y}) = 0\;\mbox{ for all }\;(u, v) \in \gr G\;\mbox{ near }\;(\bar x,\bar y).
\end{equation}
Thus there exists $0<\delta_2 <\delta_1$ such that 
\begin{equation}\label{for:3-2}
 \arrowvert s(u, v) - s(\bar{x}, \bar{y}) - \langle (u^{\ast} - x^{\ast}, - 
 y^{\ast}), (u - \bar{x}, v - \bar{y}) \rangle \lvert \le \varepsilon ( \|u - 
 \bar{x}\| + \|v - \bar{y}\|)
\end{equation}
for every $(u, v) \in \mathbb{B}((\bar{x}, \bar{y}), \delta_2)$, 
Combining the latter with \eqref{for:3-1} yields 
\begin{equation}\label{for:3-3}
\langle u^{\ast}, u - \bar{x} \rangle \le \langle x^{\ast}, u - \bar{x} \rangle 
+\langle y^{\ast}, v - \bar{y} \rangle + \varepsilon (\|u - \bar{x}\| + 
\|v - \bar{y}\|)
\end{equation}
for every $(u, v) \in \mathbb{B}((\bar{x}, \bar{y}), \delta_2) \cap \gr G$. The imposed semilocal nearly-isolated calmness of $G$ at $(\bar{x},\bar{y})$ gives us $\delta_3>0$ with $\max\{\delta_3,L\delta_3\}<\delta_2$ satisfying
\begin{equation}\label{1812023_1}
 G(u) \subset \{\bar{y}\} + L \|u - \bar{x}\|{\mathbb{\overline B}}_Y \subset 
\mathbb{B} (\bar{y}, \delta_2)\;\mbox{ whenever }\;u \in \mathbb{B} (\bar{x}, \delta_3) \setminus \{\bar{ x}\}.
\end{equation}
Observe that $(u, v) \in \mathbb{B} ((\bar{x}, \bar{y}), \delta_2) 
\cap \gr G$ whenever $u \in \mathbb{B} (\bar{x}, \delta_3) \setminus \{\bar{ x}\}$ 
and $v \in G(u)$. Thus it follows from (\ref{1712023_6}), (\ref{for:3-3}), and 
(\ref{1812023_1}) that 
\begin{align*}
\langle u^{\ast}, u - \bar{x} \rangle\le\varphi (u, v) - \varphi (\bar{x}, \bar{y}) + 2 \varepsilon (L+1)
 \|u - \bar{x}\|\;\mbox{ for all }\; ~ v  \in G(u),
\end{align*}
and therefore we arrive at the estimate
\begin{align*}
\langle u^{\ast}, u - \bar{x} \rangle & \le \mu(u) - \mu(\bar{x}) + 2 
\varepsilon (L + 1) (\|u - \bar{x}\|)\;\mbox{ when }\;u \in \mathbb{B} 
(\bar{x}, \delta_3).
\end{align*}
This tells us that $u^{\ast} \in 
\widehat {\partial} \mu (\bar{x})$ since $\varepsilon>0$ was chosen arbitrarily.\vspace*{0.05in}

\noindent $\bullet$ {\em Case~{\rm(b)}, where $G$ is locally nearly-isolated calm at 
$(\bar{x},\bar{y})$.} The proof in this case is similar to (a). Indeed, by taking $\delta_1,\delta_2$ as in (a), it follows from the local nearly-isolated calmness of $G$ at $(\bar{x},\bar{y})$ with modulus $L_{0}$ that there exists $\delta_3 > 0$ with $\max\{\delta_3,\delta_3 L_{0}\}< \delta_2$  such that for all $u \in \mathbb{B} (\bar{x}, \delta_3) \setminus \{\bar{ x}\}$ we have
\begin{align}\label{16072023}
G(u) \cap \mathbb{B} (\bar{y}, \delta_3) \subset \{\bar{y}\} + L_{0} 
\|u - \bar{x}\| {\mathbb{\overline B}}_Y \subset \mathbb{B} 
 (\bar{y}, \delta_2).
 \end{align}
 Furthermore, the choice of $u\in\mathbb{B} (\bar{x}, \delta_3) \setminus 
 \{\bar{ x}\}$ and $v \in G(u) \cap \mathbb{B} (\bar{y}, \delta_3)$ yields
 $(u, v) \in \mathbb{B} ((\bar{x}, \bar{y}), \delta_2) \cap \gr G$. Combining
\eqref{1712023_6}, \eqref{for:3-3}, and \eqref{16072023} ensures that
 that for every $u \in \mathbb{B} (\bar{x}, \delta_3) \setminus \{\bar{ x}\}$
 we get the estimate
 \begin{align}\label{12202023_1}
\langle u^{\ast}, u - \bar{x} \rangle \le \varphi (u, v) - \varphi (\bar{x}, \bar{y}) + 2 \varepsilon (L_{0} + 1) 
\|u - \bar{x}\|	
\end{align}
whenever $v\in G(u)\cap \mathbb{B} (\bar{y}, \delta_3)$. On the other hand, it follows from assumption $\eqref{H}$ that there exists $\bar \delta \le \delta_3$ such that 
$$
\mu(u) = \inf_{y \in G(u)} \varphi (u, y) = \inf\limits_{y \in G(u) \cap 
\mathbb{B} (\bar{y}, \delta_3)} \varphi (u, y)\;\mbox{ for all }\; u \in \mathbb{B} 
(\bar x, \bar\delta).
$$ 
Unifying the latter with (\ref{12202023_1}) brings us to the implication
\begin{align*}
&\langle u^{\ast}, u -\bar{x} \rangle \le \mu(u) - \mu (\bar{x}) + 2 
\varepsilon (L_0+1) \|u - \bar{x}\|\;\mbox{ if }\; u \in \mathbb{B} (\bar{x}, 
\bar\delta) \setminus \{\bar{x}\} \\
& \Longrightarrow \langle u^{\ast}, u -\bar{x} \rangle\le \mu(u) - \mu 
(\bar{x}) + 2 \varepsilon (L_0 + 1) \|u - \bar{x}\|\;\mbox{ if }\; u \in 
\mathbb{B} (\bar{x},\bar\delta),
\end{align*}
 which means that $u^* \in \widehat {\partial} \mu (\bar{ x})$ verifying the lower estimate \eqref{1722023_4} in case (b).\vspace*{0.05in}

\noindent $\bullet$ {\em Case~{\rm(c)}, where the solution mapping $M : \dom G 
\rightrightarrows Y$ admits a local upper Lipschitzian selection $h$ at 
$(\bar x, \bar y)$.} In this case, we have $\mu(x)=\varphi(x,h(x))$ for all $x$ from some neighborhood $U$ of $\bar x$. As in case (a), there exist $0<\delta_3< \delta_2$ such 
that $h(\bar{ x})=\bar{ y}$, $h(x) \in M(x)$, and 
\begin{align}\label{12212023}
\|h(x)-h(\bar{ x})\|<\ell\|x-\bar{ x}\| < \delta_2\;\mbox{ for all }\;x \in \mathbb{B} 
 (\bar{x}, \delta_3) \subset U(\bar{ x})
\end{align}
with the calmness modulus $\ell$ from \eqref{calm}. Then for every $u \in \mathbb{B} (\bar{x}, \delta_3)$, we have $(u, h(u))\in \mathbb{B} ((\bar{x}, \bar{y}), \delta_2) \cap \gr G$, which implies by 
\eqref{1712023_6} and \eqref{for:3-3} that
\begin{align*}
\langle u^{\ast}, u - \bar{x} \rangle & \le \varphi (u,h(u)) - \varphi 
(\bar{x}, \bar{y}) + 2 \varepsilon (\|u - \bar{x}\| + \|h(u) - \bar{y}\|)\\
& \le \mu(u) - \mu(\bar{ x}) + 2 \varepsilon (\ell + 1) \|u - \bar{ x}\|\;\mbox{ whenever }\;u \in \mathbb{B} (\bar x, \delta_3).
\end{align*}
Since $\varepsilon > 0$ was chosen arbitrarily, we conclude that $u^{*} \in \widehat 
{\partial} \mu (\bar{ x})$, which verifies \eqref{1722023_4} and completes the proof the theorem.
\end{proof}

The next theorem provides both lower and upper estimates for the {\em singular regular subgradients} \eqref{eq:Fsin} of marginal functions defined on normed spaces. The only result on {\em upper estimates} of $\widehat\partial^\infty\mu(\bar x)$ known to us is given in \cite[Theorem~4.1]{HuxuAn}, where a different estimate is obtained in Asplund spaces via the singular subdifferential and coderivative by Mordukhovich under the metric qualification conditions (Q1) and (Q2) from Definition~\ref{mqc}. We are not familiar with any {\em lower estimate} of $\widehat\partial^\infty\mu(\bar x)$ available in the literature. 

\begin{theorem}[\bf estimates for singular regular
 subgradients]\label{thm192024} Let $\bar{y} \in M(\bar{x})$ for the solution mapping \eqref{1722023_2} associated with \eqref{1722023_1}, where $\varphi$ is finite at $\bar x$. The following upper and lower estimates for $\widehat {\partial}^{\infty}\mu(\bar{x})$ are satisfied:
\begin{description}
\item[\bf(i)] If $\varphi$ is calm at $(\bar{ x}$,$\bar{ y}$) as in \eqref{calm}, 
then we have the upper estimate
\begin{align}\label{192024_3}
\widehat {\partial}^{\infty}\mu(\bar{x}) \subset \widehat {D}^* 
G(\bar{x},\bar{y})(0).
\end{align}

\item[\bf(ii)] The lower estimate for singular regular subgradients
\begin{align}\label{192024_4}
\widehat{\partial}^{\infty} \mu (\bar{x})\supset\bigcup_{(x^*, y^*) \in \widehat{\partial}^\infty \varphi (\bar{x}, \bar{y})}\big[ 
x^* + \widehat {D}^* G(\bar{x}, \bar{y}) (y^*) \big]
\end{align}
\end{description}
holds if at least one of the conditions below is fulfilled:
\begin{description}
\item[\bf(a)] $G$ is semilocally nearly-isolated calm at $(\bar{x},\bar{y})$.
	
\item[\bf(b)] $G$ is locally nearly-isolated calm at $(\bar{x},\bar{y})$, and  for every $\varepsilon > 0$ there exists $\gamma > 0$ such that \eqref{H} is satisfied.

\item[\bf(c)] The solution mapping $M : \dom G\rightrightarrows Y$ admits a local 
upper Lipschitzian selection  at $(\bar x, \bar y)$,
\end{description}
\end{theorem}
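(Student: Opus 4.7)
The plan is to work directly with the sequential $\varepsilon$--$\gamma$ descriptions of both $\widehat{\partial}^\infty$ and $\widehat{D}^*$ in terms of regular normals to $\epi\varphi$ and $\gr G$, in close parallel with the proof of Theorem~\ref{thm1}. The singular regular subdifferential of $\mu$ translates into a condition on $\epi\mu$, the singular regular subdifferential of $\varphi$ into a condition on $\epi\varphi$, and the coderivative of $G$ into a condition on $\gr G$; each of the three cases (a), (b), (c) is used to ferry information from $\gr G$ and $\epi\varphi$ to $\epi\mu$ by exhibiting a suitable $v\in G(u)$ for every $(u,\beta)\in\epi\mu$ near $(\bar x,\mu(\bar x))$.

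For (i), starting from $x^{*}\in\widehat{\partial}^\infty\mu(\bar x)$, I would take an arbitrary $(u,v)\in\gr G$ near $(\bar x,\bar y)$ and manufacture a point of $\epi\mu$ by exploiting the calmness of $\varphi$ with some modulus $L$. Specifically, $\mu(u)\le\varphi(u,v)\le\mu(\bar x)+L(\|u-\bar x\|+\|v-\bar y\|)$, so setting $\beta:=\mu(\bar x)+L(\|u-\bar x\|+\|v-\bar y\|)$ gives $(u,\beta)\in\epi\mu$ with $|\beta-\mu(\bar x)|$ controlled by $\|u-\bar x\|+\|v-\bar y\|$. Plugging $(u,\beta)$ into the $\varepsilon$--$\gamma$ inequality for $\widehat{\partial}^\infty\mu(\bar x)$ produces $\langle x^{*},u-\bar x\rangle\le\varepsilon(1+L)(\|u-\bar x\|+\|v-\bar y\|)$, and since $\varepsilon$ is arbitrary we read off $(x^{*},0)\in\widehat N((\bar x,\bar y);\gr G)$, i.e., $x^{*}\in\widehat D^{*}G(\bar x,\bar y)(0)$.

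For (ii), pick $u^{*}=x^{*}+z^{*}$ with $(x^{*},y^{*})\in\widehat{\partial}^\infty\varphi(\bar x,\bar y)$ and $z^{*}\in\widehat D^{*}G(\bar x,\bar y)(y^{*})$. For a given $\varepsilon>0$ I would unpack both conditions as two $\varepsilon$--$\gamma$ inequalities: one valid on $\gr G\cap\mathbb B((\bar x,\bar y),\delta_{1})$ of the form $\langle z^{*},u-\bar x\rangle-\langle y^{*},v-\bar y\rangle\le\varepsilon(\|u-\bar x\|+\|v-\bar y\|)$, and one valid on $\epi\varphi\cap\mathbb B((\bar x,\bar y,\mu(\bar x)),\delta_{2})$ of the form $\langle x^{*},u-\bar x\rangle+\langle y^{*},v-\bar y\rangle\le\varepsilon(\|u-\bar x\|+\|v-\bar y\|+|\gamma-\mu(\bar x)|)$. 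For an arbitrary $(u,\beta)\in\epi\mu$ close to $(\bar x,\mu(\bar x))$ and an auxiliary $t>0$, I would pick $v\in G(u)$ with $\varphi(u,v)\le\mu(u)+t\le\beta+t$ and set $\gamma:=\beta+t$, so that $(u,v,\gamma)\in\epi\varphi$ and $|\gamma-\mu(\bar x)|\le|\beta-\mu(\bar x)|+t$. Adding the two inequalities cancels the $y^{*}$-terms and yields $\langle u^{*},u-\bar x\rangle\le 2\varepsilon(\|u-\bar x\|+\|v-\bar y\|)+\varepsilon(|\beta-\mu(\bar x)|+t)$. In case (a), the semilocal nearly-isolated calmness supplies $\|v-\bar y\|\le L\|u-\bar x\|$ for \emph{any} $v\in G(u)$, so letting $t\downarrow 0$ yields $u^{*}\in\widehat{\partial}^\infty\mu(\bar x)$. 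Case (b) proceeds identically once \eqref{H} is used to legitimately restrict the infimum defining $\mu(u)$ to $G(u)\cap\mathbb B(\bar y,\delta)$, inside which local nearly-isolated calmness applies. Case (c) is cleanest: take $v:=h(u)$ from the upper Lipschitzian selection, so $\varphi(u,v)=\mu(u)\le\beta$, no auxiliary $t$ is needed, and $\|h(u)-\bar y\|\le\ell\|u-\bar x\|$ plays the role of the nearly-isolated calmness bound.

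The main technical obstacle is in cases (a) and (b), where $v$ is only a $t$-approximate minimizer of $\varphi(u,\cdot)$ on $G(u)$: I must ensure that the perturbed triple $(u,v,\beta+t)$ remains inside the $\delta_{2}$-neighborhood demanded by the singular subgradient inequality for $\varphi$ \emph{uniformly} as $t\downarrow 0$. The nearly-isolated calmness keeps $v$ close to $\bar y$ irrespective of the choice in $G(u)$, while hypothesis \eqref{H} in case (b) makes the localized selection of $v$ admissible; together these two ingredients allow me to take the limit $t\to 0$ without losing control of the denominator $\|u-\bar x\|+|\beta-\mu(\bar x)|$ and to conclude as required.
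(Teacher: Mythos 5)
Your proposal is correct. For the lower estimate (ii) it is essentially the paper's own argument: you add the same two $\varepsilon$--$\delta$ inequalities, one coming from $(x^*,y^*,0)\in\widehat N((\bar x,\bar y,\varphi(\bar x,\bar y));\ep\varphi)$ and one from $(u^*-x^*,-y^*)\in\widehat N((\bar x,\bar y);\gr G)$, and the three cases are handled exactly as in the paper, with (a) using that \emph{every} $v\in G(u)$ satisfies $\|v-\bar y\|\le L\|u-\bar x\|$ for $u\ne\bar x$ near $\bar x$, (b) using \eqref{H} to localize the infimum to $G(u)\cap\mathbb B(\bar y,\delta)$ where the local version applies, and (c) taking $v=h(u)$ with $\varphi(u,h(u))=\mu(u)$. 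Your only deviations there are streamlinings: you use the regular-normal inequality on $\gr G$ directly rather than the smooth variational description of Lemma~\ref{12202023_lem} that the paper invokes, and your shift $\gamma:=\beta+t$ with $t\downarrow 0$ treats in one stroke the two subcases ($\alpha>\mu(u)$ and $\alpha=\mu(u)$) that the paper separates; your attention to keeping $(u,v,\beta+t)$ inside the $\delta$-neighborhood uniformly in $t$ is exactly the point that makes this work. For the upper estimate (i) you take a genuinely different and more self-contained route: the paper first passes to $(u^*,0)\in\widehat{\partial}^\infty(\varphi+\delta_{\gr G})(\bar x,\bar y)$ by citing \cite[Proposition~4.1]{HuxuAn} and then uses calmness to absorb the function-value term, whereas you argue directly from the $\varepsilon$--$\gamma$ description of $\widehat{\partial}^\infty\mu(\bar x)$ by manufacturing, for each $(u,v)\in\gr G$ near $(\bar x,\bar y)$, the epigraph point $(u,\beta)$ with $\beta:=\mu(\bar x)+L(\|u-\bar x\|+\|v-\bar y\|)\ge\mu(u)$; this yields the same conclusion without the external citation. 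One shared caveat, present in the paper's proof as well (inequality \eqref{1_9_2023}) and hence not a gap attributable to you: calmness of the extended-real-valued $\varphi$ in the sense of \eqref{calm} controls $\varphi$ only on its domain, so your construction of $\beta$ is literally available only at points $(u,v)\in\gr G\cap\dom\varphi$; both your argument and the paper's tacitly treat nearby points of $\gr G$ as lying in $\dom\varphi$.
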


\begin{proof}
To verify the upper estimate \eqref{192024_3} in (i), observe that 
\begin{align*}
\ep (\varphi + \delta_{\gr G}) = \ep \varphi \cap [\gr G \times \mathbb{ R}].
\end{align*}
Picking $u^*\in\widehat {\partial}^{\infty} \mu(\bar{ x})$, we get by \cite[Proposition~4.1]{HuxuAn} that $(u^*, 0) \in \widehat {\partial}^{\infty} (\varphi + \delta_{\gr G}) (\bar{x}, 
\bar{ y})$, which is equivalent to 
 $(u^*, 0, 0) \in \widehat N \left( (\bar{x}, \bar{y}, \varphi (\bar{x}, \bar{y})); 
 \ep \varphi\cap [\gr G\times \mathbb{R}] \right)$, and hence 
 $(u^*,0,0)\in \widehat N\left( (\bar{ x},\bar{ y},\varphi 
 (\bar{ x},\bar{ y})); \gr \varphi\cap [\gr G\times \mathbb{ R}] \right)$. Thus
 for every $\varepsilon>0$ there exists $\delta>0$ such that 
 \begin{align*}
\langle u^*, x - \bar x \rangle \le \varepsilon (\|x - \bar x\| + \|y - \bar{y}\| 
 + \varphi (x, y) - \varphi (\bar{ x}, \bar{ y})|)
 \end{align*}
 whenever $(x, y, \varphi (x, y)) \in (\gr \varphi \cap [\gr G \times \mathbb{R}]) 
 \cap \mathbb{B} ((\bar{x}, \bar{y}, \varphi (\bar{x}, \bar{y})), \delta)$. Employing the assumed calmness property of $\varphi$ gives us 
 $\kappa\ge 0$ ensuring that
 \begin{align}\label{1_9_2023}
\langle u^*, x - \bar x \rangle \le \varepsilon (\kappa + 1) (\|x - \bar x\| + 
\|y - \bar{ y}\|)
 \end{align}
 for all $(x, y) \in \mathbb{B} ((\bar{x}, \bar{y}), \delta)$ and $y \in G(x)$.
 Remembering that $\varepsilon > 0$ was chosen arbitrarily and using (\ref{1_9_2023}), we get the relationship
\begin{align*}
\limsup\limits_{(x, y) \xrightarrow{\gr G}(\bar{ x}, \bar{ y})} \dfrac{\langle 
 u^*, x - \bar x\rangle - \langle 0, y - \bar y\rangle}{\|x - \bar x\| + \|y - 
\bar{ y}\|}\le 0
\end{align*}
meaning that $u^{*} \in \widehat D^* G(\bar{x}, \bar{y}) (0)$, i.e., $\widehat {\partial}^{\infty} \mu(\bar{x}) \subset \widehat {D}^* G(\bar{x}, 
\bar{y}) (0)$. This justifies the claimed upper subdifferential estimate 
\eqref{192024_3}. 
	
Now we derive the lower subdifferential estimate in (ii). Fix an arbitrary number $\varepsilon > 0$, pick $u^{\ast}$ from the right-hand side of \eqref{192024_4}, and ${(x^{\ast}, y^{\ast}) \in{\widehat {\partial}^\infty} \varphi (\bar{x}, \bar{y})}$ with $u^{\ast} - 
{x}^{\ast} \in \widehat {D}^* G(\bar{x},\bar{y}) (y^{\ast})$, i.e.,
$(u^{\ast} - {x}^{\ast}, -y^{\ast})\in \widehat{N} ( (\bar{x},\bar{y});\gr G)$. It follows from ${(x^{\ast},y^{\ast})\in {\widehat {\partial}^\infty} \varphi 
(\bar{x}, \bar{y})}$ that
$$
(x^{\ast},y^{\ast},0)\in \widehat N\big((\bar{ x},\bar{ y}), \varphi(\bar{ x}, 
\bar{ y});\ep\varphi\big),
$$
which yields the existence of $\delta_1>0$ such that for every triple $(u, v, \alpha) \in 
\mathbb{B} (((\bar{x}, \bar{y}), \varphi (\bar{ x}, \bar{ y})), \delta_1) 
\cap \ep \varphi$ we have the estimate
\begin{align}\label{29062023_1}
\langle x^{\ast}, u - \bar{x} \rangle + \langle y^{\ast}, v - \bar{y} \rangle 
\le \varepsilon (\|u - \bar{x}\| + \|v - \bar{y}\| + |\alpha - \varphi (\bar{x}, 
\bar{ y})|).
\end{align}
Lemma~\ref{12202023_lem} tells us that there exists a function $s: X \times Y \rightarrow \mathbb{R}$, which is Fr\'echet differentiable at $(\bar{x},\bar y)$ while satisfying $\nabla s(\bar{x}, \bar{y}) = (u^{\ast}-x^{\ast}, -y^{\ast})$ and  \eqref{for:3-1}. This implies in turn that there exists $0 < \delta_{2} < \delta_{1}$ ensuring the fulfillment of \eqref{for:3-2} and \eqref{for:3-3}. Therefore, for every 
$(u, v) \in \mathbb{B} ((\bar{x}, \bar{y}), \delta_2) \cap \gr G$ we have
\begin{equation}\label{29062023_3}
\langle u^{\ast}, u - \bar{x} \rangle \le \langle x^{\ast}, u - \bar{x} \rangle + 
\langle y^{\ast},  v - \bar{y} \rangle + \varepsilon (\|u - \bar{x}\| + \|v - 
\bar{y} \|).
\end{equation}

Now we are ready to verify \eqref{192024_4} in all the three cases listed in (ii).\vspace*{0.05in}

\noindent{$\bullet$} {\em Case~{\rm(a)}, where $G$ is semilocally nearly-isolated calm at 
$(\bar{x},\bar{y})$ with modulus $L$.} Taking into account the discussions above and the imposed nearly-isolated calmness condition, we find $\delta_3 > 0$ with $\max\{\delta_3,L\delta_3\}<\delta_2$ such that semilocal calmness condition \eqref{1812023_1} is satisfied. Observing that 
$(u, v) \in \mathbb{B} ((\bar{x}, \bar{y}), \delta_2) \cap \gr G$ for each vectors $u \in \mathbb{B} (\bar{x}, \delta_3) \setminus 
\{\bar{x}\}$ and $v \in G(u) \cap \mathbb{B} (\bar{y}, \delta_3)$ allows us to deduce from \eqref{1812023_1}
\eqref{29062023_1}, and \eqref{29062023_3} that 
\begin{align}\label{29062023_5}
\langle u^{\ast}, u - \bar{x} \rangle \le 2 \varepsilon (\|u - \bar{x}\| + 
\|v - \bar{y}\| + |\alpha - \varphi (\bar{x}, \bar{y})|)
\end{align}
for every $(u, v, \alpha) \in [(\mathbb{B} (\bar{x}, \delta_3) \setminus 
\{\bar{x}\} \times \mathbb{B} (\bar{ y}, \delta_2)) \cap \gr G \times
\mathbb{B}_\mathbb{R} (\mu (\bar{x}), \delta_1)] \cap \ep \varphi$.  
Then we claim the fulfillment of the estimate
\begin{align}\label{est-sing}
\langle u^{\ast}, u - \bar{x} \rangle \le 2 \varepsilon (L + 1) (\|u - \bar{x}\| 
 + |\alpha-\mu(\bar{ x})|)
\end{align} 
for all $(u, \alpha) \in B((\bar{x}, \mu(\bar{x})); \delta_3) \cap\ep\mu$. To verify \eqref{est-sing}, take $\alpha>\mu(u)$ and a sequence of $\{v_k\}$ satisfying
$$
\varphi (u, v_k) \xrightarrow{\{v_k\}\subset G(u)} \mu(u) \quad \text{as} 
\quad k\to \infty.
$$ 
Then $\alpha \ge \varphi (u, v_k)$ for $k$ sufficiently large, and thus
$$
(u, v_k, \alpha) \in [( (\mathbb{B} (\bar{x}, \delta_3) \setminus 
\{\bar{ x}\} ) \times \mathbb{B} (\bar{ y}, \delta_2)) \cap \gr G \times 
\mathbb{B}_\mathbb{R} (\mu (\bar{ x}), \delta_1)] \cap \ep \varphi.
$$ 
If follows from (\ref{1812023_1}) and (\ref{29062023_5}) that
\begin{align*}
\langle u^{\ast}, u -\bar{x}\rangle&\le 2\varepsilon(\|u-\bar{x}\| + \|v_k 
 - \bar{y}\|+| \alpha-\mu(\bar{ x})|)\\
 & \le 2 \varepsilon (L+1)(\|u-\bar{x}\| + |\alpha - \mu(\bar{ x})|)
\end{align*} 
for large $k$. If $\alpha = \mu(u)$ for all $\gamma > 0$ satisfying $\alpha + \gamma \in 
\mathbb{B}_{\mathbb{ R}} (\mu(\bar{ x}), \delta_3)$, then $\alpha + 
\gamma > \mu (u)$, and the same device as above brings us to
\begin{align*}
\langle u^{\ast}, u - \bar{x} \rangle \le 2 \varepsilon (L + 1) (\|u - \bar{x}\| 
+ |\alpha + \gamma - \mu (\bar{ x})|).
\end{align*}
Since $\gamma>0$ was chosen arbitrarily small, this justifies the claimed estimate \eqref{est-sing}. The arbitrary choice of $\varepsilon > 0$ ensures that $(u^*,0) \in \widehat N 
((\bar{ x}, \mu(\bar{ x})); \ep \mu)$, and thus we get $u^{\ast} \in \widehat 
{\partial}^\infty \mu(\bar{x})$ justifying \eqref{192024_4}.\vspace*{0.05in}

\noindent $\bullet$ {\em Case~{\rm(b)}, where $G$ is locally nearly-isolated calm at 
$(\bar{x},\bar{y})$ under the fulfillment of \eqref{H}.}  The proof of \eqref{192024_4} in this case is similar to the above case (a) with taking into account the arguments in case (b) of Theorem~\ref{thm1}.\vspace*{0.05in}

\noindent $\bullet$ {\em Case~{\rm(c)}, where the solution mapping $M : \dom G 
\rightrightarrows Y$ admits an upper Lipschitzian selection at $(\bar x, \bar y)$}. Let $h$ be such a selection with modulus $\ell>0$ in \eqref{calm}. We have $\mu (x) = \varphi(x,h(x))$ for all $x \in U$ for a neighborhood $U$ of $\bar{x}$. As in the proof of case (c) in Theorem~\ref{thm1}, find $0 < \delta_3< \delta_2$ such that $h(\bar{ x}) = \bar{y}$, $h(x) \in M(x)$, and condition \eqref{12212023} is satisfied. Furthermore, $(u, h(u)) \in 
\mathbb{B} ((\bar{x}, \bar{y}), \delta_2) \cap \gr G$ for every $u \in \mathbb{B} (\bar{x}, \delta_3)$, and thus it follows from the estimates in
\eqref{29062023_1} and \eqref{29062023_3} that
\begin{align*}
\langle u^{\ast}, u - \bar{x} \rangle \le 2 \varepsilon (\|u - \bar{x}\| + 
\|h(u) - \bar{y}\| + |\alpha - \varphi (\bar{x}, \bar{y})|)
\end{align*}
for all $(u, h(u), \alpha) \in [(\mathbb{B} ((\bar{x}, \delta_3) \times 
\mathbb{B} (\bar{y}, \delta_2)) \cap \gr G \times \mathbb{B}_{\mathbb{R}} 
(\mu(\bar{ x}), \delta_1) ]\cap \ep \varphi$. By \eqref{12212023} and $\mu(u) = 
\varphi (u, h(u))$ for all $u \in\mathbb{B} (\bar{x}, \delta_3)$, we get
\begin{align*}
\langle u^{\ast}, u - \bar{x} \rangle\le 2\varepsilon(\ell+1)(\|u-\bar{x}\|+|\alpha-\mu(\bar{ x})|)
\end{align*}
whenever $(u, \alpha) \in [\mathbb{B} (\bar{x}, \delta_3) \times (\mathbb{B}_{
\mathbb{ R}} (\mu (\bar{ x}), \delta_3)] \cap \ep \mu$. This tells us, by the arbitrary choice of $\varepsilon>0$, that $(u^{*}, 0) \in \widehat N ((\bar{x}, 
\mu (\bar{x})); \ep \mu)$, and hence $u^{\ast} \in \widehat {\partial}^{\infty} 
\mu (\bar{x})$, which completes the proof of theorem. 
\end{proof}

We conclude this section with an example illustrating the subdifferential estimates obtained in Theorems~\ref{thm1} and \ref{thm192024}.

\begin{example}[\bf illustrating subdifferential estimates for marginal functions]
{\rm Consider the marginal function $\mu(\cdot)$ 
in (\ref{1722023_1}) with $X=Y=\mathbb R$, 
$$ \varphi(x,y) := \lvert x \rvert - y,
~~~ {\rm and} ~~~
G(x) = \left\{\begin{array}{ll}
\{x\} & {\rm if} ~ x > 0,\\
(-\infty,0] & {\rm if} ~ x = 0,\\
\emptyset,& {\rm otherwise}.
\end{array}
\right.
$$
Then we have
$$ 
\mu(x) 
= \left\{\begin{array}{ll}
0, & {\rm if} ~ x \ge 0,\\
+\infty, & {\rm otherwise},
\end{array}
\right.\text{and}\,\,\,\,
M(x) 
= \left\{\begin{array}{ll}
\{x\}, & {\rm if} ~ x \ge 0,\\
\emptyset, & {\rm otherwise}.
\end{array}
\right.
$$ 

Here $G$ is locally nearly-isolated calm at $(\bar{x}, \bar{y}) := (0, 0)$ 
for every $L>0$ and assumption \eqref{H} is also valid, 
\begin{align*}
\widehat{D}^*G(0,0)(y^*) = \left\{\begin{array}{ll}
\emptyset & {\rm if} ~ y^* > 0,\\
(-\infty,y^*] & {\rm if} ~ y^* \le 0,
\end{array}
\right.
\end{align*}
$\widehat{\partial} \varphi (0, 0) = [-1, 1] \times \{-1\}$, $\widehat{
\partial}^\infty \varphi (0, 0) = \{(0, 0)\}$, and $\widehat{\partial}^{+} 
\varphi (0, 0) = \emptyset$. Due to the latter fact, the upper estimate of $\widehat\partial\mu(0)$ in Theorem~\ref{thm1}(i) cannot be applied. On the other hand, Theorem~\ref{thm1}(ii) gives us a lower 
estimate of $\widehat{\partial}\mu(0)$ by
\begin{align*}
\widehat{\partial} \mu (0)\supset\bigcup_{(x^{\ast}, y^{\ast}) \in \widehat{\partial} \varphi(0, 0)} 
\big[x^{\ast} + \widehat {D}^* G (0, 0) (y^{\ast})\big]=(-\infty,0].
\end{align*}
Furthermore, it follows from inclusions \eqref{192024_3} and \eqref{192024_4} of 
Theorem~\ref{thm192024} that
\begin{align*}
\widehat{\partial}^\infty \mu (0)=\widehat {D}^* G (0, 0) (0)=(-\infty,0].
\end{align*}}
\end{example}\vspace*{-0.2in}

\section{Exact Calculations of Regular and Singular Regular Subdifferentials of Marginal Functions}\label{sec:4}

The results of this section provide sufficient conditions ensuring {\em exact formulas} (i.e., {\em equalities)} for calculating both regular and singular regular subdifferentials of marginal functions. We present two theorems of this type: one for marginal functions with Fr\'echet differentiable costs, and the other one for fully nonsmooth data under additional qualification conditions. As in the previous section, all the spaces in question are {\em arbitrarily normed}.\vspace*{0.05in} 

The first theorem concerns the case where the cost function in \eqref{1722023_1} is Fr\'echet differentiable. Formula \eqref{1822023_2} for the regular subdifferential can be found in \cite[Theorem~2]{MNY2009} under condition (a) formulated in Theorem~\ref{thm192024}. The other results of the following theorem are, to the best of our knowledge, new. 

\begin{theorem}[\bf marginal functions with differentiable costs]\label{thm2} Let $\mu(\cdot)$ \\
 from \eqref{1722023_1} be finite at 
 some $\bar{x} \in \dom M$, and let $\bar{y} \in M(\bar{x})$. Suppose 
 that $\varphi$ is Fr\'echet differentiable at $(\bar{x}, \bar{y})$. Then we have the calculation formulas
 \begin{align}
& \widehat{\partial} \mu (\bar{x}) = x^{\ast} + \widehat {D}^* G (\bar{x}, 
\bar{y}) (y^{\ast})\;\mbox{ with }\;(x^{\ast}, y^{\ast}) := \nabla \varphi(\bar{x},\bar{y}), \label{1822023_2} \\
& \widehat{\partial}^\infty \mu (\bar{x}) =  \widehat {D}^* G 
(\bar{x}, \bar{y}) (0) \label{192024_2}
\end{align}
provided that at least one of the conditions {\rm(a)--(c)} of Theorem~{\rm\ref{thm192024}} is satisfied.
\end{theorem}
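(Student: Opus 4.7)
The plan is to combine the two-sided estimates from Theorems~\ref{thm1} and~\ref{thm192024} under the simplifications brought by Fr\'echet differentiability of $\varphi$ at $(\bar{x},\bar{y})$. The heavy lifting is already accomplished by those two theorems; the present proof will be a short exploitation of three elementary consequences of Fr\'echet differentiability.

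For the regular subdifferential equality~\eqref{1822023_2}, I would first invoke the fact cited just after~\eqref{upper-sub} that Fr\'echet differentiability of $\varphi$ at $(\bar{x},\bar{y})$ forces both $\widehat{\partial}\varphi(\bar{x},\bar{y})$ and $\widehat{\partial}^{+}\varphi(\bar{x},\bar{y})$ to equal the singleton $\{\nabla\varphi(\bar{x},\bar{y})\}=\{(x^*,y^*)\}$. Hence the intersection in~\eqref{1722023_3} of Theorem~\ref{thm1}(i) collapses to the single set $x^*+\widehat{D}^*G(\bar{x},\bar{y})(y^*)$, yielding the ``$\subset$'' inclusion in~\eqref{1822023_2}. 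Under any of the three alternative conditions (a)--(c), Theorem~\ref{thm1}(ii) supplies the reverse inclusion because the union in~\eqref{1722023_4} likewise collapses to the same set. Combining both inclusions gives~\eqref{1822023_2}.

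For the singular subdifferential equality~\eqref{192024_2}, the upper estimate $\widehat{\partial}^{\infty}\mu(\bar{x})\subset\widehat{D}^*G(\bar{x},\bar{y})(0)$ will follow from Theorem~\ref{thm192024}(i) once I verify that $\varphi$ is calm at $(\bar{x},\bar{y})$ in the sense of~\eqref{calm}. This is immediate from the Fr\'echet derivative inequality: for $\varepsilon_0>0$ fixed and $(u,v)$ sufficiently close to $(\bar{x},\bar{y})$,
\begin{equation*}
|\varphi(u,v)-\varphi(\bar{x},\bar{y})|\le\bigl(\|\nabla\varphi(\bar{x},\bar{y})\|+\varepsilon_0\bigr)\,\|(u,v)-(\bar{x},\bar{y})\|,
\end{equation*}
which is exactly the calmness condition with modulus $\|\nabla\varphi(\bar{x},\bar{y})\|+\varepsilon_0$. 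For the reverse inclusion, the one short auxiliary step is to prove that $\widehat{\partial}^{\infty}\varphi(\bar{x},\bar{y})=\{(0,0)\}$ whenever $\varphi$ is Fr\'echet differentiable at $(\bar{x},\bar{y})$. Starting from the $\varepsilon$-$\gamma$ description of~\eqref{eq:Fsin}, I would take $(u,v,\beta)\in\ep\varphi$ with $\beta:=\varphi(u,v)$, absorb the term $|\beta-\varphi(\bar{x},\bar{y})|$ via the displayed bound, and let $\varepsilon\downarrow 0$ along arbitrary directions in $X\times Y$ to force $(u^*,v^*)=(0,0)$. With this singleton description in hand, Theorem~\ref{thm192024}(ii) under any of (a)--(c) reduces the union in~\eqref{192024_4} to $0+\widehat{D}^*G(\bar{x},\bar{y})(0)$, delivering the ``$\supset$'' inclusion in~\eqref{192024_2}.

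The main obstacle is essentially bookkeeping: verifying each of the three scalar facts (singleton regular subdifferential, calmness at the base point, and trivial singular regular subdifferential) under the sole assumption of Fr\'echet differentiability. Only the third of these requires a genuine $\varepsilon$-$\gamma$ argument of any substance, and it is the single step I would expect to write out carefully; everything else is a direct application of the machinery already developed in Sections~\ref{sec:2} and~\ref{sec:3}.
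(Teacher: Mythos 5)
Your proposal is correct and follows essentially the same route as the paper: it deduces $\widehat{\partial}^{+}\varphi(\bar{x},\bar{y})=\widehat{\partial}\varphi(\bar{x},\bar{y})=\{\nabla\varphi(\bar{x},\bar{y})\}$ from Fr\'echet differentiability and then sandwiches $\widehat{\partial}\mu(\bar{x})$ and $\widehat{\partial}^{\infty}\mu(\bar{x})$ between the upper and lower estimates of Theorems~\ref{thm1} and~\ref{thm192024}, with your explicit verifications of the calmness of $\varphi$ and of $\widehat{\partial}^{\infty}\varphi(\bar{x},\bar{y})=\{(0,0)\}$ merely spelling out details the paper leaves implicit. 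Note only that for the inclusion $\widehat{D}^{*}G(\bar{x},\bar{y})(0)\subset\widehat{\partial}^{\infty}\mu(\bar{x})$ you do not need the full singleton identity, since $(0,0)\in\widehat{\partial}^{\infty}\varphi(\bar{x},\bar{y})$ always holds and already makes \eqref{192024_4} deliver that inclusion.
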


\begin{proof}
 Since $\varphi$ is Fr\'echet differentiable at $(\bar{x}, \bar{y})$, we have
 $$
 \widehat{\partial}^{+} \varphi (\bar{x}, \bar{y}) = \widehat{\partial} 
 \varphi (\bar{x}, \bar{y}) = \{\nabla \varphi (\bar{x}, \bar{y})\} =\big\{(x^{\ast}, 
 y^{\ast})\big\}.
 $$Therefore, equality (\ref{1822023_2}) follows from 
 \eqref{1722023_3} and \eqref{1722023_4} of Theorem~\ref{thm1}, while 
 equality \eqref{192024_2} is a consequence of \eqref{192024_3} and \eqref{192024_4} in
 Theorem \ref{thm192024}. 
\end{proof}

The following example shows that Theorem~\ref{thm2} can be applied to ensure the equalities in \eqref{1822023_2} and \eqref{192024_2} under the near-isolated calmness of $G$, while the solution mapping $M$ fails to admit an upper Lipschitzian selection,

\begin{example}[\bf equalities under nearly-isolated calmness]\label{exa-icalm}
 {\rm Let $X=Y=\mathbb{ R}$ and $(\bar{ x},\bar{ y})=(0,0)$ in the setting of Theorem~{\rm\ref{thm2}} with
 $$
 \varphi(x,y):=(x-y^2)^2
 ~~~ {\rm and} ~~~
G(x): = \left\{\begin{array}{ll}
\mathbb{R} & {\rm if} ~ x = 0,\\
\emptyset & {\rm otherwise}.
\end{array}
\right.
$$
Then we can directly calculate that
$$
\mu(x)=\left\{\begin{array}{ll}
0 & {\rm if} ~ x = 0,\\
\infty & {\rm otherwise}
\end{array}
\right.
~~~{\rm and} ~~~
M(x) = \left\{\begin{array}{ll}
\{0\} & {\rm if} ~ x = 0,\\
\emptyset & {\rm otherwise}.
\end{array}
\right.
$$
Therefore, the solution mapping $M(\cdot)$ does not admit an upper Lipschitzian 
selection at $(0, 0)$ telling us that \cite[Theorem~2]{MNY2009} cannot be applied. 
Nevertheless, since $G$ is (semilocally) nearly-isolated calm at $(0, 0)$, we deduce the claimed equalities \eqref{1822023_2} and \eqref{192024_2} from
Theorem~\ref{thm2} under the nearly-isolated calmness.}
\end{example}

The next example illustrates that neither of the assumptions of Theorem~\ref{thm2} is {\em necessary} for the subdifferential equalities therein.

\begin{example}[\bf equalities without nearly-isolated calmness]\label{exa-calm1} {\rm  Let $X=Y=\mathbb{ R}$ and $(\bar{ x},\bar{ y})=(0,0)$ in the setting of Theorem~{\rm\ref{thm2}} with
$$
 \varphi(x,y):=(x-y^2)^2
 ~~~ {\rm and} ~~~
G(x):=\mathbb R. 
$$
This example is taken from \cite[Example~2(ii)]{MNY2009}, where it is shown that the solution mapping $M$ does not admit an upper Lipschitzian selection, while the regular subdifferential equality \eqref{1822023_2} holds. In this case, it is easy to see that the constraint mapping $G$ fails to be nearly-isolated calm at $(0,0)$, and hence Theorem~\ref{thm2} cannot be applied. Nevertheless, we have here that
\begin{align*}
\widehat\partial^\infty \mu(0) =\widehat\partial \mu(0) = \{0\}, ~~ \nabla \varphi(0,0)=\{(0,0)\}, ~~ 
\widehat D^* G(0,0) (0) = \{0\},
\end{align*}
and thus both equalities \eqref{1822023_2} and \eqref{192024_2} are satisfied.}
\end{example}

Now we establish new calculation formulas for both regular and singular subdifferentials of marginal functions with {\em nondifferentiable} costs. Furnishing this involves a {\em metric qualification condition} on the data of \eqref{1722023_1}.

\begin{theorem}[\bf marginal functions with nondifferentiable costs]\label{thm5}
 Let $\mu(\cdot)$ the marginal function from \eqref{1722023_1} be finite at 
 some $\bar{x}\in\dom M$, and let $\bar{y}\in M(\bar{x})$. 
Suppose that the following conditions are fulfilled:
\begin{description}
\item[($\bf\mathcal{A}_1$)] The sets ${\rm epi}\,\varphi$ and $\gr G \times 
\mathbb{R}$ satisfy the metric qualification condition \eqref{Q2} for $\partial^{\textasteriskcentered} = \widehat{\partial}$ 
at $(\bar{x}, \bar{y},\varphi(\bar{x}, \bar{y}))$.
 
\item[($\bf\mathcal{A}_2$)] At least one of the conditions {\rm(a)--(c)} of Theorem~{\rm\ref{thm192024}} holds.
\end{description}
Then we have the  subdifferential calculation formulas
\begin{equation}\label{28062023}
{\widehat {\partial}}\mu(\bar{x})=\bigcup_{(x^{\ast},y^{\ast})\in {\widehat {\partial}} 
\varphi(\bar{x},\bar{y})}\big[x^{\ast} + \widehat {D}^* G(\bar{x}, \bar{y}) 
(y^{\ast})\big],
\end{equation}
\begin{equation}\label{29062023}
{\widehat {\partial}^\infty} \mu(\bar{x}) = \bigcup_{(x^{\ast}, y^{\ast}) 
\in {\widehat {\partial}^\infty} \varphi (\bar{x}, \bar{y})}\big[x^{\ast} + 
\widehat {D}^* G(\bar{x},\bar{y})(y^{\ast})\big].
\end{equation}
\end{theorem}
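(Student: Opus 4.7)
The inclusions ``$\supset$'' in both \eqref{28062023} and \eqref{29062023} are furnished immediately by the lower estimates of Theorem~\ref{thm1}(ii) and Theorem~\ref{thm192024}(ii), which are available because of assumption $(\mathcal A_2)$. The real work is to establish the reverse inclusions ``$\subset$''. The guiding idea is to transport any (singular) regular subgradient of $\mu$ at $\bar x$ into a regular normal at $(\bar x,\bar y,\varphi(\bar x,\bar y))$ to the intersection $\ep\varphi\cap(\gr G\times\mathbb R)$, to split this normal using the metric qualification condition $(Q_2)$ from $(\mathcal A_1)$, and finally to re-read each piece in terms of $\widehat\partial\varphi$ (or $\widehat\partial^\infty\varphi$) and $\widehat D^\ast G$.

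For the reduction step I would exploit the identity $\ep(\varphi+\delta_{\gr G})=\ep\varphi\cap(\gr G\times\mathbb R)$ together with $\mu(\bar x)=\varphi(\bar x,\bar y)$ and $\varphi(x,y)\ge\mu(x)$ on $\gr G$. Comparing difference quotients directly yields $(u^\ast,0)\in\widehat\partial(\varphi+\delta_{\gr G})(\bar x,\bar y)$ for every $u^\ast\in\widehat\partial\mu(\bar x)$, which is equivalent to
\begin{equation*}
(u^\ast,0,-1)\in\widehat N\big((\bar x,\bar y,\varphi(\bar x,\bar y));\ep\varphi\cap(\gr G\times\mathbb R)\big),
\end{equation*}
and the parallel argument already performed in the first part of the proof of Theorem~\ref{thm192024} (via \cite[Proposition~4.1]{HuxuAn}) places $(u^\ast,0,0)$ into the same cone whenever $u^\ast\in\widehat\partial^\infty\mu(\bar x)$.

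Next I would invoke Lemma~\ref{12202023_lem2} to rewrite these normal-cone memberships as $\lambda\,\widehat\partial d(\cdot,\ep\varphi\cap(\gr G\times\mathbb R))(\bar x,\bar y,\varphi(\bar x,\bar y))$ for some $\lambda>0$, apply the qualification $(Q_2)$ to split the distance function, and use Lemma~\ref{lem2722023_1} on each piece to return to normal cones of $\ep\varphi$ and $\gr G\times\mathbb R$ separately. This produces $a>0$ and triples $(p_1,p_2,p_3)\in\widehat N((\bar x,\bar y,\varphi(\bar x,\bar y));\ep\varphi)$ and $(q_1,q_2,0)$ with $(q_1,q_2)\in\widehat N((\bar x,\bar y);\gr G)$ for which $(u^\ast,0,-1)=a\lambda(p_1+q_1,\,p_2+q_2,\,p_3)$ (respectively $(u^\ast,0,0)=a\lambda(p_1+q_1,\,p_2+q_2,\,p_3)$ in the singular case). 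The third coordinate forces $p_3=-1/(a\lambda)<0$ in the first case and $p_3=0$ in the second, so the homogeneity of epigraphical normals identifies $(x^\ast,y^\ast):=a\lambda(p_1,p_2)$ with an element of $\widehat\partial\varphi(\bar x,\bar y)$, respectively $\widehat\partial^\infty\varphi(\bar x,\bar y)$. The second-coordinate equation $p_2+q_2=0$ combined with the definition \eqref{cod} of $\widehat D^\ast G$ then rewrites $u^\ast-x^\ast=a\lambda q_1\in\widehat D^\ast G(\bar x,\bar y)(y^\ast)$, which delivers both ``$\subset$'' inclusions simultaneously.

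The delicate point I expect to be the main obstacle is the singular reduction step: the implication ``$u^\ast\in\widehat\partial^\infty\mu(\bar x)\Rightarrow(u^\ast,0,0)\in\widehat N((\bar x,\bar y,\varphi(\bar x,\bar y));\ep\varphi\cap(\gr G\times\mathbb R))$'' is not purely formal and must go through \cite[Proposition~4.1]{HuxuAn}, whose applicability is exactly what $(\mathcal A_2)$ secures; everything afterwards reduces to careful coordinate bookkeeping.
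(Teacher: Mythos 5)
Your proposal is correct and follows essentially the same route as the paper's proof: the lower inclusions come from Theorems~\ref{thm1}(ii) and \ref{thm192024}(ii) under $(\mathcal A_2)$, while the upper inclusions are obtained by transporting $(u^{\ast},0,-1)$, respectively $(u^{\ast},0,0)$, into $\widehat N\big((\bar x,\bar y,\varphi(\bar x,\bar y));\ep\varphi\cap(\gr G\times\mathbb R)\big)$, splitting via \eqref{Q2} through Lemmas~\ref{lem2722023_1} and \ref{12202023_lem2}, and re-reading the pieces as elements of $\widehat\partial\varphi(\bar x,\bar y)$ (or $\widehat\partial^\infty\varphi(\bar x,\bar y)$) and $\widehat D^{*}G(\bar x,\bar y)$, exactly as in the paper. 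One minor correction: the singular reduction step is essentially formal (it follows from $\mu(\bar x)=\varphi(\bar x,\bar y)$ and $\ep(\varphi+\delta_{\gr G})$ projecting into $\ep\mu$) and does not rely on $(\mathcal A_2)$, which is needed only for the opposite inclusions.
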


\begin{proof}  
Denoting $C :=\ep\varphi$ and $D:= \gr G\times \mathbb{R}$, we intend to show that 
\begin{align}\label{12202023_2}
(x^{*},\beta) \in \widehat{N}\big((\bar{x}, \mu(\bar{x}); \ep \mu\big) 
\Longrightarrow (x^{*}, 0, \beta) \in \widehat{N}\big((\bar{x}, \bar{y}, 
\varphi (\bar{x}, \bar{y})); C \cap D\big).
\end{align}
Indeed, it follows from Lemma~\ref{12202023_lem} that there exists a 
Fr\'echet differentiable function $s: X \times \mathbb{ R} \rightarrow 
\mathbb{R}$ such that $\nabla s (\bar{x}, \mu(\bar{x})) = 
(x^{\ast}, \beta)$ and  
\begin{equation*}
s(x,r) \le s(\bar{x}, \mu (\bar{x})) = 0\;\mbox{ whenever }\;(x, r) \in \text{epi}\, \mu.
\end{equation*}
Fix $\varepsilon > 0$ and use the Fr\'echet differentiability of $s(\cdot)$ to find
$\delta_1>0$ such that 
\begin{align*}
& |s(x,r) - \langle(x^{\ast}, \beta), (x - \bar{x}, r - \mu(\bar{x})) \rangle \lvert \le \varepsilon (\|x - \bar{x}\| + |r - \mu(\bar{x})|) \notag \\
& \Longrightarrow \, \langle(x^{\ast}, \beta), (x - \bar{x}, r - \mu(\bar{x})) 
\rangle \le \varepsilon (\|x - \bar{x}\| + |r - \mu(\bar{x})|) 
\end{align*}
for every $(x, r) \in\mathbb{B}((\bar{x}, \mu(\bar{x})), \delta_1)$. This yields the estimate
\begin{align*}
\langle x^{\ast}, x - \bar{x} \rangle & \le \langle - \beta,  r - \mu(\bar{x}) 
\rangle + \varepsilon (\|x - \bar{x}\| + |r - \mu(\bar{x})|)
\end{align*}
whenever $(x, r) \in \mathbb{B} ((\bar{x}, \mu(\bar{x})),\delta_1) \cap \ep\mu$. Since $\mu(\bar{ x})=\varphi(\bar{ x},\bar{ y})$ and $\mu(x) 
 \le \varphi(x,y)\le r$ for every $(x, y, r) \in \mathbb{B} ((\bar{x}, \bar{y}, 
 \varphi(\bar{x},\bar{y})), \delta_1)\cap C\cap D$, we have
 \begin{align*}
\langle x^{\ast}, x - \bar{x} \rangle \le & \, \langle - \beta, r - \varphi 
(\bar{x}, \bar{y}) \rangle + \varepsilon (\|x - \bar{x}\| + \|y - \bar y\| + 
|r - \varphi (\bar{x}, \bar{y})|) \notag \\
&\mbox{for all }\;(x, y, r) \in \mathbb{B} ((\bar{x}, \bar{y},\varphi(\bar{x}, 
\bar{y})), \delta_1)\cap C\cap D,  
\end{align*}
which therefore justifies the implication in \eqref{12202023_2}.
	
To verify \eqref{28062023}, pick $p^{*} \in \widehat{\partial} 
\mu (\bar{x})$ and get $(p^{*}, - 1) \in \widehat{N} (\bar{x}, \mu(\bar{x}); 
\ep\mu)$. By Lemma~\ref{12202023_lem2}, there exist  $(x^*, \beta) 
\in \widehat{\partial} d(\bar x, \mu(\bar x), \ep\mu)$ and $\lambda> 0$ 
such that $p^* = \lambda x^*$ and $-1=\lambda\beta$, where
$\lambda \ge 1$ by Lemma~\ref{lem2722023_1}. Since $\bar{y} \in M(\bar{x})$, we claim that
\begin{align*}
(x^*, \beta) \in \widehat{\partial} d(\bar x, \mu(\bar x), \text{epi}\,\mu) 
\Longrightarrow (x^{*}, 0, \beta) \in \widehat{\partial} d((\bar{x}, \bar{y}, 
\varphi (\bar{x}, \bar{y})), C \cap D).
\end{align*}
Indeed, it follows from Lemma~\ref{lem2722023_1} that
\begin{align*}
(x^*, \beta) \in \widehat{\partial} d(\bar x, \mu(\bar x), \ep \mu) = 
\mathbb{B}_{X^{*} \times \mathbb{R}} \cap \widehat{N} ((\bar{x}, \mu 
(\bar{x})); \ep\mu),
\end{align*}
which implies that $\| (x^*, \beta)\| \le 1$ and $(x^*, \beta) \in \widehat{N} 
((\bar{x}, \mu (\bar{x})); \text{epi}\, \mu)$. By using \eqref{12202023_2}, 
we get $(x^*, 0, \beta) \in \widehat{N} ((\bar{x}, \bar{y}, \varphi 
(\bar{x}, \bar{y})); C \cap D)$ and $\|(x^*, 0, \beta)\| \le 1$. Employing again 
Lemma~\ref{lem2722023_1} tells us that
\begin{align}\label{12202023_3}
 (x^*, 0, \beta) \in \widehat{\partial} d((\bar{x}, \bar{y}, \varphi 
 (\bar{x}, \bar{y})), C \cap D).
\end{align}
On the other hand, assumption ($\mathcal{A}_1$) ensures the subdifferential estimate
\begin{align*}
\widehat{\partial} d((\bar{x}, \bar{y}, \varphi(\bar{x}, \bar{y})), C \cap D) & \subset a\big[\widehat {\partial} d((\bar{x}, \bar{y}, \varphi (\bar{x}, 
\bar{y})), C) + \widehat {\partial} d((\bar{x}, \bar{y}, \varphi (\bar{x}, 
\bar{y})), D)\big],
\end{align*}
which yields in turn together with \eqref{12202023_3} the existence of the triples $(z^*, 
y^*, \gamma) \in \widehat {\partial} d ((\bar{x}, \bar{y}, \varphi 
(\bar{x}, \bar{y})), \ep \varphi)$ and $(u^*, v^*) \in \widehat{\partial} 
d((\bar{x}, \bar{y}); \gr G)$ satisfying 
\begin{align*}
&\hspace{2.1cm}\big[x^* = a (z^*+u^*), ~~ 0 = a(y^*+v^*), ~~ \beta = a 
\gamma\big] \\
& \Longrightarrow\;\big[p^* = \lambda x^* = \lambda a(z^* + u^*), ~~ 
 \lambda ay^*  = - \lambda a v^*, ~~ -1 = \lambda \beta = a \lambda 
 \gamma\big].
\end{align*}
Therefore, $(\lambda a z^*, \lambda a y^*) \in \widehat{\partial} \varphi 
(\bar{x}, \bar{y})$, $p^{*} \in \lambda a z^{*} + \widehat {D}^* 
G (\bar{x},\bar{y})(\lambda a y^{*})$, and so
\begin{align*}
\widehat{\partial} \mu (\bar{x}) \subset \bigcup_{(x^*, y^*) \in 
\widehat{\partial} \varphi (\bar{x}, \bar{y})}\big[x^* + \widehat {D}^* G 
(\bar{x}, \bar{y}) (y^*)\big].
\end{align*}
Unifying the latter with  the opposite inclusion \eqref{1722023_4} in 
Theorem~\ref{thm1} justifies the claimed equality \eqref{28062023} for regular subgradients of marginal functions.

It remains to verify equality \eqref{29062023} for singular regular subgradients. In fact, the upper singular subdifferential estimate
\begin{align*} 
\widehat{\partial}^{\infty} \mu (\bar{x}) \subset \bigcup_{(x^*, y^*)
\in \widehat{\partial}^\infty \varphi (\bar{x}, \bar{y})} x^* + \widehat {D}^* 
 G(\bar{x},\bar{y})(y^*)
\end{align*}
can be justified by using the same device as above for $\widehat\partial\mu(\bar x)$, while the lower estimate for $\widehat{\partial}^{\infty}\mu(\bar x)$ is \eqref{192024_4} obtained in 
Theorem~\ref{thm192024}.
\end{proof}

Finally in this section, we construct an example of a {\em convex} marginal function for which the results of Theorem~\ref{thm5} give us the exact calculation formulas for both regular and singular regular subdifferentials, while known results in this direction (obtained for the convex subdifferential of marginal functions) are not applicable. Although regular subgradients and normals for convex functions and sets reduce to the classical ones of convex analysis, we will keep the hat-notation for them for notation consistency. 

\begin{example} [\bf subgradient calculations for convex marginal functions]\label{exa-convex} $\,$ {\rm Let $X = Y = \mathbb{ R}^2$ and $(\bar{x}, \bar{y}) = (0_{\mathbb{R}^2}, 
 0_{\mathbb{ R}^2})$. Define 
$$
f(y): = \left\{\begin{array}{ll}
0 & {\rm if} ~ y = 0_{\mathbb{ R}^2},\\
\infty & {\rm otherwise}
\end{array}
\right.
$$
and consider the marginal function $\mu(\cdot)$ in 
\eqref{1722023_1} with the data
$$
G(x): = \left\{\begin{array}{ll}
\mathbb{R}_{+} \times \{0\} & {\rm if} ~ x = 
0_{\mathbb{R}^2}, \\
\emptyset & {\rm otherwise}
\end{array}
\right.
~~~ {\rm and} ~~~\varphi (x, y) := f(y),\quad (x, y) \in X \times Y. 
$$
Since $\dom\varphi=\mathbb{ R}^2\times\{0_{\mathbb{R}^{2}}\}$, 
$\gr G = \{0_{\mathbb{ R}^2}\} \times \mathbb{R}_{+} \times \{0\}$ with
$\inte(\gr G) = \emptyset$, and $\varphi$ is discontinuous at any point 
$(x_0, y_0) \in \gr G$, the result of \cite[Theorem~4.2]{AnYen2015} cannot be 
applied. We also are not able to use \cite[Theorems~3.3 and 3.4]{AnYao2016} due to
$$
\dom\varphi - \gr G = \mathbb{ R}^2 \times \mathbb{R}_{-} \times \{0\} 
\ne X \times Y.
$$ 
Moreover, observe that \cite[Theorem~4.56]{MN2022} is not applicable because $\varphi$ is discontinuous, the set $\mathbb{R}_{+} (\dom \varphi - \gr G)$ is not a closed subspace of 
$X \times Y$, and we have $\ri(\dom\varphi) \cap \ri (\gr G) = \emptyset$.

It is easy to see that the solution mapping \eqref{1722023_2} in this problem is 
\begin{align*}
M(x) = \left\{\begin{array}{ll}
\{0_{\mathbb{ R}^2}\} & {\rm if} ~ x = 0_{\mathbb{ R}^2},\\
\emptyset & {\rm otherwise},
\end{array}
\right.
\end{align*}
which does not admit a local upper Lipschitzian selection at 
$(0_{\mathbb{R}^2}, 0_{\mathbb{ R}^2})$. Thus we are not familiar with any result in the literature that can be applied to calculate the convex subdifferential of the marginal function under consideration. 

On the other hand, the direct calculations show that 
\begin{align*}
& \text{
 $\ep \varphi = \mathbb{R}^{2} \times \{0_{\mathbb{ R}^2}\} \times 
 \mathbb{ R}_{+}$},\\
 & \widehat N ((0_{\mathbb{R}^2}, 0_{\mathbb{R}^2}, 0); \gr G \times 
 \mathbb{R}) = \mathbb{R}^{2} \times \mathbb{R}_{-} \times \mathbb{R} 
 \times \{0\},\\
 & \widehat N ((0_{\mathbb{R}^2}, 0_{\mathbb{R}^2}, 0); \ep \varphi) = 
 \{0_{\mathbb{R}^2}\} \times \mathbb{R}^2 \times \mathbb{R}_{-},\\
 & \widehat N ((0_{\mathbb{R}^2}, 0_{\mathbb{R}^2}, 0); \ep \varphi \cap 
 (\gr G\times \mathbb{R})) = \widehat N ((0_{\mathbb{R}^2}, 
 0_{\mathbb{R}^2}, 0); \{0_{\mathbb{R}^2}\} \times 
 \{0_{\mathbb{R}^2}\} \times \mathbb{R}_+) \\
 & = \mathbb{ R}^{2} \times \mathbb{R}^{2} \times \mathbb{R}_{-}.
\end{align*}
Observe furthermore the representation
\begin{align*}
& \hspace{1.5cm} 
\widehat N ((0_{\mathbb{R}^2}, 0_{\mathbb{ R}^2}, 0); 
\ep \varphi \cap (\gr G \times \mathbb{R})) \\ 
& = \widehat N ((0_{\mathbb{R}^2}, 0_{\mathbb{R}^2}, 0); \ep \varphi) 
+ \widehat N ((0_{\mathbb{ R}^2}, 0_{\mathbb{ R}^2}, 0); \gr G \times 
\mathbb{ R}),
\end{align*}
which ensures by \cite[Theorem~3.3]{HuxuAn} that assumption $({\cal A}_1)$ is satisfied. Since 
$G$ clearly enjoys the semilocal nearly-isolated calmness property at 
$(0_{\mathbb{R}^2}, 0_{\mathbb{R}^2})$ for every modulus $L>0$, 
assumption ($\mathcal{A}_2$) holds too. Hence the equalities in \eqref{28062023} 
and \eqref{29062023} follow from Theorem~\ref{thm5}. We can also confirm this by the direct calculations, which show that
\begin{align*}
\mu(x) = \inf\big\{f(y)\;\big|\;y\in G(x)\big\} = \left\{\begin{array}{ll}
0 & {\rm if} ~ x = 0_{\mathbb{R}^2}, \\
\infty & {\rm otherwise}
\end{array}
\right.
\end{align*}
and $\widehat {\partial}^{\infty} \mu 
(0_{\mathbb{ R}^2}) = \widehat {\partial} \mu (0_{\mathbb{ R}^2}) = 
\mathbb{ R}^2$ due to 
$$
\widehat N (0_{\mathbb{R}^2}; \epi \mu) = \widehat N (0_{\mathbb{R}^2}; 
\{0_{\mathbb{ R}^2}\} \times \mathbb{R}_{+}) = \mathbb{ R}^2 \times 
\mathbb{R}_{-},$$ 
$$\widehat {\partial}^{\infty} \varphi (0_{\mathbb{ R}^2}, 0_{\mathbb{ R}^2}) 
= \widehat {\partial} \varphi (0_{\mathbb{R}^2}, 0_{\mathbb{R}^2}) = 
\{0_{\mathbb{ R}^2}\} \times \mathbb{ R}^2,\;\mbox{ and}$$ 
\begin{align*}
\widehat {D}^* G (0_{\mathbb{ R}^2}, 0_{\mathbb{ R}^2}) (y^{*}_1,y^{*}_2) = 
\left\{\begin{array}{ll}
\mathbb{R}^{2} & {\rm if} ~ y^{*}_{1} \ge 0,\\
\emptyset & {\rm otherwise}.
\end{array}
\right.
\end{align*}}
\end{example}\vspace*{-0.15in}

\section{Applications to Variational Calculus}\label{sec:5}

This section develops new {\em calculus rules as equalities} for regular subgradients and their singular counterparts in general {\em normed spaces}. Needless to saying that such a {\em variational subdifferential calculus} plays a crucial role in applications of variational analysis to structured problems of optimization and related areas. 

The obtained calculus rules are mainly derived from the calculation results for marginal functions established in Section~\ref{sec:4} with the systematical usage of the {\em calmness} and {\em metric qualification} conditions. The calculus rules for both regular and singular regular subgradients obtained below under the metric qualification conditions seem to be completely new, while some of the results for regular subgradients involving calmness \eqref{calm} can be found in \cite{MNY2006} under the more restrictive local Lipschitz continuity. Note that the chain rules of the inclusion type derived in \cite{HuxuAn} under metric qualification conditions in Asplund spaces are fully different from ours while providing upper estimates of regular and singular regular subdifferentials in terms of the limiting ones by Mordukhovich.\vspace*{0.03in}

When the constraint mapping $G := g: X \to Y$ in (\ref{1722023_1}) is {\em single-valued}, the marginal function $\mu(\cdot)$ reduces to the (generalized) composition
\begin{align}\label{compo12232023}
 \mu(x)=\left(\varphi\circ g\right)(x):=\varphi\big(x,g(x)\big),\quad x\in X,
\end{align}
which is the standard one $\varphi(g(x))$ when $\varphi$ does not depend 
on $x$.\vspace*{0.03in}

First we present chain rules for regular and singular regular subdifferentials of generalized compositions \eqref{compo12232023}.
		
\begin{theorem}[\bf subdifferentiation of generalized compositions]\label{Theo12232023}
 Let $\varphi: X \times Y \to \mathbb{\bar R}$ be finite at $(\bar x, \bar y)$ 
 with $\bar y := g(\bar x)$, and let $g: X \to  Y$ be calm at $\bar{x}$. 
 Assume that $\varphi$ is Fr\'echet differentiable at $(\bar x,\bar y)$ with  
 $\nabla\varphi(\bar x,\bar y):=(\nabla_x\varphi(\bar x,\bar y), \nabla_y 
 \varphi(\bar x,\bar y))$. Then the following chain rules hold for generalized compositions \eqref{compo12232023}:
\begin{equation*}
\begin{array}{ll}
\widehat {\partial} \left(\varphi \circ g \right) (\bar{x})&= \nabla_x 
\varphi (\bar{x}, \bar{y}) + \widehat D^* g(\bar{x}) \left(\nabla_y \varphi 
(\bar{x}, \bar{y}) \right)\\
&= \nabla_x \varphi (\bar{x}, \bar{y}) + \widehat{\partial} 
\langle \nabla_y \varphi (\bar{x}, \bar{y}),  g\rangle (\bar{x}),
\end{array}
\end{equation*}
\begin{equation*}
\widehat {\partial}^\infty \left(\varphi \circ g \right) (\bar{x})  = \{0\}. 
\end{equation*}
\end{theorem}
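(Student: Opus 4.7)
The plan is to reduce the composite-subgradient theorem to the marginal-function calculation theorem of Section~\ref{sec:4} by taking the constraint multifunction to be the singleton-valued mapping generated by $g$. Concretely, I set $G(x) := \{g(x)\}$ so that $\gr G = \gr g$, and I consider the marginal function associated with the data $\varphi$ and $G$ in \eqref{1722023_1}. Because $G(x)$ is a singleton for every $x\in X$, the induced marginal function coincides with the generalized composition:
\begin{equation*}
\mu(x) = \inf\big\{\varphi(x,y)\;\big|\;y\in G(x)\big\} = \varphi\big(x,g(x)\big) = (\varphi\circ g)(x),
\end{equation*}
and the solution mapping \eqref{1722023_2} satisfies $M(x) = \{g(x)\}$ on $\dom g = X$. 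Moreover $\bar y = g(\bar x) \in M(\bar x)$, and since $g$ is calm at $\bar x$ with $g(\bar x)=\bar y$, the mapping $g$ itself furnishes an upper Lipschitzian selection of $M$ in the sense of Definition~\ref{upper-lip}. Hence condition~(c) of Theorem~\ref{thm192024} is in force.

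Second, I invoke Theorem~\ref{thm2}, whose remaining hypothesis---Fr\'echet differentiability of $\varphi$ at $(\bar x,\bar y)$---is assumed. Writing $\nabla\varphi(\bar x,\bar y) = (\nabla_x\varphi(\bar x,\bar y),\nabla_y\varphi(\bar x,\bar y))$ and observing that the regular coderivative of $G$ at $(\bar x,\bar y)$ reduces to the regular coderivative of the single-valued mapping $g$ at $\bar x$ because $\gr G=\gr g$, formula \eqref{1822023_2} yields
\begin{equation*}
\widehat\partial(\varphi\circ g)(\bar x) = \nabla_x\varphi(\bar x,\bar y) + \widehat D^{*}g(\bar x)\big(\nabla_y\varphi(\bar x,\bar y)\big),
\end{equation*}
which is the first equality of the chain rule. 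Similarly, formula \eqref{192024_2} yields
\begin{equation*}
\widehat\partial^{\infty}(\varphi\circ g)(\bar x) = \widehat D^{*}g(\bar x)(0).
\end{equation*}

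Third, the two remaining identifications use the scalarization formula \eqref{scal}, which is applicable because $g$ is calm at $\bar x$. Applied to $y^{*} = \nabla_y\varphi(\bar x,\bar y)$, it gives $\widehat D^{*}g(\bar x)(\nabla_y\varphi(\bar x,\bar y)) = \widehat\partial\langle\nabla_y\varphi(\bar x,\bar y),g\rangle(\bar x)$, producing the second equality of the chain rule. Applied to $y^{*} = 0$, it gives $\widehat D^{*}g(\bar x)(0) = \widehat\partial\langle 0,g\rangle(\bar x) = \widehat\partial 0(\bar x) = \{0\}$, delivering the singular formula $\widehat\partial^{\infty}(\varphi\circ g)(\bar x) = \{0\}$.

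The proof is therefore essentially a bookkeeping exercise once the composite is identified with the appropriate marginal function. The only point that requires a moment of care is the verification that the constraint data $G(x)=\{g(x)\}$ together with the calmness of $g$ activates hypothesis~(c) of Theorem~\ref{thm192024} (and through it, of Theorem~\ref{thm2}); no metric qualification condition or nearly-isolated calmness of $G$ is needed, precisely because the single-valued $g$ itself is the required upper Lipschitzian selection of $M$.
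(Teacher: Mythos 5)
Your proposal is correct and follows essentially the same route as the paper: identify the composition with the marginal function generated by the singleton-valued constraint $G=g$, apply Theorem~\ref{thm2}, and pass to the stated forms via the scalarization formula \eqref{scal}. The only cosmetic difference is that you activate hypothesis~(c) of Theorem~\ref{thm192024} by noting that $g$ is itself an upper Lipschitzian selection of $M$, whereas the paper observes that calmness of $g$ yields the nearly-isolated calmness in (a) and (b); both are valid alternatives within the same theorem.
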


\begin{proof} This follows from Theorem~\ref{thm2} specified for \eqref{compo12232023} with observing that the calmness condition \eqref{calm} imposed on $G=g$ yields the semilocal near-isolated calmness in (a)  of Theorem~\ref{thm192024}, while ensuring the regular coderivative scalarization \eqref{scal} and thus the fulfillment of both  claimed formulas by \eqref{192024_2}.
\end{proof}

As consequences of Theorem~~\ref{Theo12232023}, we get the following {\em product and quotient rules} under the calmness assumptions.

\begin{corollary}[\bf products and quotient rules]\label{product} Let the functions $\varphi_1,\varphi_2: X \to  \mathbb{\bar R}$ be calm at $\bar{ x}$. The following assertions hold:
\begin{description}
\item[\bf(i)] $\widehat {\partial} \left(\varphi_1 \cdot \varphi_2 \right) 
(\bar{ x}) = \widehat {\partial} (\varphi_2 (\bar{x}) \varphi_1 + \varphi_1 
(\bar{x}) \varphi_2) (\bar{ x})$ and $\widehat {\partial}^\infty \left( 
\varphi_1 \cdot \varphi_2 \right) (\bar{ x}) =\{0\}$.\\
If in addition $\varphi_1$ is Fr\'echet differentiable at $\bar{ x}$, then
\begin{align*}
\widehat {\partial} \left( \varphi_1 \cdot \varphi_2 \right) (\bar{x}) = \nabla 
\varphi_1 (\bar{x}) \varphi_2 (\bar{x}) + \widehat {\partial} \left(\varphi_1
(\bar{x}) \varphi_2 \right) (\bar{x}).
\end{align*}

\item[\bf(ii)] Assume that $\varphi_2(\bar{ x})\ne 0$. Then
\begin{align*}
\widehat {\partial} \left(\varphi_1/\varphi_2\right) (\bar{x}) = \dfrac{
\widehat {\partial} \left(\varphi_2(\bar{ x}) \varphi_1 - \varphi_1 (\bar{x})
\varphi_2\right) (\bar{ x})}{[\varphi_2(\bar{ x})]^2}, \quad 
\widehat{\partial}^\infty \left(\varphi_1  / \varphi_2 \right) (\bar{ x}) =\{0\}.
\end{align*}
If in addition $\varphi_1$ is Fr\'echet differentiable at $\bar{x}$, then
\begin{align*}
\widehat {\partial} \left(\varphi_1/\varphi_2\right) (\bar{ x}) = \dfrac{\nabla 
\varphi_1 (\bar{x}) \varphi_2(\bar{ x}) + \widehat {\partial} \left(- \varphi_1 
(\bar{ x}) \varphi_2 \right)(\bar{ x})}{[\varphi_2(\bar{ x})]^2}. 
\end{align*}

\item[\bf(iii)] Let $\varphi: X \to \mathbb{\bar R}$ be calm at $\bar{x}$ 
 with $\varphi(\bar x) \neq 0$. Then
\begin{align*}
\widehat {\partial} (1/\varphi) (\bar{x}) = - \dfrac{\widehat {\partial}^{+} 
\varphi (\bar{x})}{[\varphi(\bar{ x})]^2}, \quad \widehat {\partial}^\infty 
\left(1 /\varphi_2 \right) (\bar{ x}) = \{0\}.
\end{align*}
\end{description}
\end{corollary}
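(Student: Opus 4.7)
All three items in Corollary~\ref{product} will be derived uniformly by recasting the operation as a generalized composition and invoking the chain rule Theorem~\ref{Theo12232023}, followed by two elementary manipulations that follow directly from definition~\eqref{eq:Fdef}: positive homogeneity $\widehat\partial(cf)(\bar x)=c\widehat\partial f(\bar x)$ for $c>0$, and peeling off a Fr\'echet differentiable summand $s$ via $\widehat\partial(s+f)(\bar x)=\nabla s(\bar x)+\widehat\partial f(\bar x)$. For part~(i), I would take $g(x):=(\varphi_1(x),\varphi_2(x))\colon X\to\mathbb R^2$, which is calm at $\bar x$ because its two coordinates are (in the product norm on $\mathbb R^2$), together with the Fr\'echet differentiable outer function $\varphi(x,(y_1,y_2)):=y_1y_2$, for which $\nabla_x\varphi(\bar x,\bar y)=0$ and $\nabla_y\varphi(\bar x,\bar y)=(\varphi_2(\bar x),\varphi_1(\bar x))$. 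Theorem~\ref{Theo12232023} then immediately yields the scalarization
\[
\widehat\partial(\varphi_1\cdot\varphi_2)(\bar x)=\widehat\partial\bigl\langle(\varphi_2(\bar x),\varphi_1(\bar x)),g\bigr\rangle(\bar x)=\widehat\partial\bigl(\varphi_2(\bar x)\varphi_1+\varphi_1(\bar x)\varphi_2\bigr)(\bar x)
\]
together with $\widehat\partial^\infty(\varphi_1\cdot\varphi_2)(\bar x)=0$. Under the extra Fr\'echet differentiability of $\varphi_1$, I would reapply Theorem~\ref{Theo12232023} with $g:=\varphi_2$ and $\varphi(x,y):=\varphi_1(x)y$, now Fr\'echet differentiable at $(\bar x,\varphi_2(\bar x))$ with $\nabla_x\varphi(\bar x,\bar y)=\varphi_2(\bar x)\nabla\varphi_1(\bar x)$ and $\nabla_y\varphi(\bar x,\bar y)=\varphi_1(\bar x)$, producing the displayed splitting.

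For part~(ii), since $\varphi_2(\bar x)\neq 0$ the map $\Phi(y_1,y_2):=y_1/y_2$ is Fr\'echet differentiable at $(\varphi_1(\bar x),\varphi_2(\bar x))$ with gradient $\bigl(1/\varphi_2(\bar x),-\varphi_1(\bar x)/[\varphi_2(\bar x)]^2\bigr)$, so the same scheme with $g:=(\varphi_1,\varphi_2)$ and $\varphi(x,y):=\Phi(y)$ yields
\[
\widehat\partial(\varphi_1/\varphi_2)(\bar x)=\widehat\partial\Bigl(\tfrac{1}{\varphi_2(\bar x)}\varphi_1-\tfrac{\varphi_1(\bar x)}{[\varphi_2(\bar x)]^2}\varphi_2\Bigr)(\bar x)=\tfrac{1}{[\varphi_2(\bar x)]^2}\widehat\partial\bigl(\varphi_2(\bar x)\varphi_1-\varphi_1(\bar x)\varphi_2\bigr)(\bar x),
\]
where the second equality is positive homogeneity with $c=1/[\varphi_2(\bar x)]^2$; the singular part is again $0$ from Theorem~\ref{Theo12232023}. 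When $\varphi_1$ is additionally Fr\'echet differentiable at $\bar x$, peeling the smooth summand $\varphi_2(\bar x)\varphi_1$ off the right-hand side (via $\nabla(\varphi_2(\bar x)\varphi_1)(\bar x)=\varphi_2(\bar x)\nabla\varphi_1(\bar x)$) gives the claimed explicit decomposition.

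Part~(iii) is then the specialization of the Fr\'echet differentiable case of~(ii) to the constant function $\varphi_1\equiv 1$ (with $\nabla\varphi_1(\bar x)=0$), combined with the definitional identity $\widehat\partial(-\varphi)(\bar x)=-\widehat\partial^+\varphi(\bar x)$ from~\eqref{upper-sub}. The entire argument is essentially bookkeeping on top of Theorem~\ref{Theo12232023}, so no single step is a serious obstacle; the only genuinely delicate point is verifying calmness of the vector-valued inner map $g=(\varphi_1,\varphi_2)$ from componentwise calmness, which is immediate from the product-norm estimate $\|g(x)-g(\bar x)\|=|\varphi_1(x)-\varphi_1(\bar x)|+|\varphi_2(x)-\varphi_2(\bar x)|\le(\ell_1+\ell_2)\|x-\bar x\|$.
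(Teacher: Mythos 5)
Your proposal is correct and follows essentially the same route as the paper: represent the product and quotient as generalized compositions with the inner map $g=(\varphi_1,\varphi_2)$ and the smooth outer functions $y_1y_2$ and $y_1/y_2$, apply Theorem~\ref{Theo12232023}, and obtain (iii) as the special case $\varphi_1\equiv 1$ via \eqref{upper-sub}. The only cosmetic difference is that for the refined formula in (i) you reapply the chain rule with $\varphi(x,y):=\varphi_1(x)y$, whereas the paper peels off the differentiable summand by the elementary sum rule of \cite[Proposition~1.107(i)]{Mor2006}; both steps are equivalent bookkeeping.
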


\begin{proof}
To verify (i), observe that $\varphi_1\cdot \varphi_2$ is represented as composition 
\eqref{compo12232023} with $\varphi: \mathbb{ R}^2 \to\mathbb{ R}$ and $g: X \to \mathbb{R}^2$ defined by $\varphi(y_1,y_2):=y_1\cdot y_2$ and $g(x) := (\varphi_1(x), \varphi_2(x))$. Thus 
Theorem~\ref{Theo12232023} yields the first equalities in (i), which implies 
the second one due to the elementary sum rule from \cite[Proposition~1.107(i)]{Mor2006}. 

The proof of (ii) is similar with $\varphi(y_1,y_2):=y_1/ y_2$ and the same  mapping $g$ in (\ref{compo12232023}), while (iii) is a special 
case of (ii) with $\varphi_1:= 1$ and $\varphi_2: = \varphi$.
\end{proof}

The next results provide new {\em sum rules} for both regular and singular regular subdifferentials of nonsmooth functions under the simultaneous fulfillment of the calmness and metric qualification conditions. 

\begin{theorem}[\bf sum rules for nonsmooth functions]\label{sum-rule}
 Let $\varphi_1,\varphi_2\colon X \to \mathbb{\bar R}$ be 
 extended-real-valued functions such that $\varphi_2$ is calm at $\bar{x}$ and that $\varphi(x,y) := \varphi_1(x)+y$ for all $(x,y)\in X\times \mathbb{R}$ is finite at $(\bar{x}, \varphi_2 (\bar{x}))$. Suppose in addition that the sets 
 ${\rm epi}\,\varphi$ and $\gr \varphi_2\times \mathbb{R}$ satisfy the metric qualification  condition \eqref{Q2} for $\partial^{\textasteriskcentered} = \widehat {\partial}$ at $(\bar{x}, \varphi_2 
 {(\bar x)}, \varphi_1(\bar{ x}) +\varphi_2(\bar x))$. Then we have
\begin{align*}
&\widehat {\partial} (\varphi_1 + \varphi_2) (\bar{x}) = \widehat {\partial} 
\varphi_1 (\bar{x}) + \widehat {\partial} \varphi_2 (\bar{x}),\\
& \widehat {\partial}^{\infty} (\varphi_1 + \varphi_2) (\bar{ x})  = \widehat {\partial}^{\infty} \varphi_1 (\bar{ x}).
\end{align*}
\end{theorem}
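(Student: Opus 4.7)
The plan is to interpret $\varphi_1+\varphi_2$ as the marginal function attached to a single-valued constraint map and then invoke Theorem~\ref{thm5}. Concretely, I would set $G\colon X\rightrightarrows\mathbb R$ by $G(x):=\{\varphi_2(x)\}$ and define the auxiliary cost $\Phi(x,y):=\varphi_1(x)+y$ on $X\times\mathbb R$. Then the marginal function \eqref{1722023_1} built from $\Phi$ and $G$ equals $(\varphi_1+\varphi_2)(x)$, the solution map \eqref{1722023_2} is $M(x)=\{\varphi_2(x)\}$, and $\gr G=\gr\varphi_2$; in particular $\bar y:=\varphi_2(\bar x)\in M(\bar x)$ and $\Phi(\bar x,\bar y)=\varphi_1(\bar x)+\varphi_2(\bar x)$ is finite, so $\mu(\bar x)$ is finite.

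With this setup I would verify the two blanket hypotheses of Theorem~\ref{thm5}. Assumption $(\mathcal A_1)$ is literally the metric qualification posited in the present statement, once one observes $\gr G=\gr\varphi_2$. For $(\mathcal A_2)$, the calmness of $\varphi_2$ at $\bar x$ means that the single-valued mapping $x\mapsto\varphi_2(x)$ is its own upper Lipschitzian selection for $M$ in the sense of Definition~\ref{upper-lip}, so condition~(c) of Theorem~\ref{thm192024} is fulfilled.

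Next I would evaluate the right-hand sides of the calculation formulas \eqref{28062023} and \eqref{29062023} on the present data. Because $\Phi(x,y)=\varphi_1(x)+y$ splits as the sum of a function independent of $y$ and the smooth map $(x,y)\mapsto y$ whose gradient is $(0,1)$, applying the elementary sum rule with a smooth summand (or checking definitions \eqref{eq:Fdef} and \eqref{eq:Fsin} directly) yields
\[
\widehat\partial\Phi(\bar x,\bar y)=\widehat\partial\varphi_1(\bar x)\times\{1\},\qquad\widehat\partial^\infty\Phi(\bar x,\bar y)=\widehat\partial^\infty\varphi_1(\bar x)\times\{0\}.
\]
Meanwhile, the calmness of $\varphi_2$ activates the scalarization formula \eqref{scal}: $\widehat D^*G(\bar x,\bar y)(y^*)=\widehat\partial(y^*\varphi_2)(\bar x)$, whence $\widehat D^*G(\bar x,\bar y)(1)=\widehat\partial\varphi_2(\bar x)$ and $\widehat D^*G(\bar x,\bar y)(0)=\{0\}$.

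Plugging these identifications into \eqref{28062023} gives $\widehat\partial(\varphi_1+\varphi_2)(\bar x)=\widehat\partial\varphi_1(\bar x)+\widehat\partial\varphi_2(\bar x)$, and plugging them into \eqref{29062023} yields $\widehat\partial^\infty(\varphi_1+\varphi_2)(\bar x)=\widehat\partial^\infty\varphi_1(\bar x)+\{0\}=\widehat\partial^\infty\varphi_1(\bar x)$, as desired. The only delicate step I anticipate is the product-space decomposition of $\widehat\partial\Phi$ and $\widehat\partial^\infty\Phi$; it is routine but worth singling out, since the whole argument hinges on the fact that the dual variable paired with $y$ is forced to equal $1$ in the regular case and $0$ in the singular case, which is precisely what collapses the union over $\widehat\partial\Phi$ (resp.\ $\widehat\partial^\infty\Phi$) in \eqref{28062023} and \eqref{29062023} into a single Minkowski sum.
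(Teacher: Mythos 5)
Your proposal is correct and takes essentially the same route as the paper: the paper also realizes $\varphi_1+\varphi_2$ as the marginal function of the generalized composition with cost $\varphi(x,y)=\varphi_1(x)+y$ and single-valued constraint map $G:=\varphi_2$, and then applies Theorem~\ref{thm5}. Your extra details (calmness of $\varphi_2$ giving the upper Lipschitzian selection $h=\varphi_2$ of $M$, the splittings $\widehat\partial\varphi(\bar x,\bar y)=\widehat\partial\varphi_1(\bar x)\times\{1\}$ and $\widehat\partial^\infty\varphi(\bar x,\bar y)=\widehat\partial^\infty\varphi_1(\bar x)\times\{0\}$, and the scalarization of $\widehat D^*G$) merely spell out steps the paper leaves implicit.
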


\begin{proof}
 It follows from \eqref{compo12232023} that
 $$
 \mu(x) = \left(\varphi \circ \varphi_2 \right) (x) = \varphi(x, \varphi_2(x)) = \varphi_1 (x) + \varphi_2(x),
 $$ 
 and thus the result follows by applying Theorem \ref{thm5} with $G:=\varphi_2$.
\end{proof}

\begin{theorem}[\bf chain rules for subdifferentials]\label{theo172024}
 Let $h: Y \to \mathbb{ \bar R}$ be finite at $ \bar y$ with $\bar{y} := 
 g(\bar x)$ and $g: X \to Y$ be  calm at $\bar{x}$. Suppose that the 
 sets $X \times \ep h$ and $\gr g \times \mathbb{R}$ satisfies 
 condition \eqref{Q2} for $\widehat{\partial}$ at $(\bar{x}, \bar{y}, h(\bar{y}))$. Then
 \begin{align}
& {\widehat {\partial}} \left(h \circ g \right) (\bar{x})  = 
\bigcup_{y^{\ast} \in {\widehat {\partial}} h(\bar{y})} \widehat {\partial}
\langle y^{\ast}, g \rangle (\bar{x}), \label{eq:cr09} \\
& {\widehat {\partial}^\infty} \left(h \circ g \right) (\bar{x}) = 
\bigcup_{y^{\ast} \in {\widehat {\partial}^\infty} h(\bar{y})} 
\widehat {\partial} \langle y^{\ast}, g\rangle(\bar{x}). \label{eq:cr10}
 \end{align} 
\end{theorem}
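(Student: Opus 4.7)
The plan is to cast the composition $h\circ g$ as a special instance of the marginal function framework of Section~\ref{sec:4} and then invoke Theorem~\ref{thm5} together with the regular coderivative scalarization formula \eqref{scal}. Specifically, define
\[
\varphi(x,y):=h(y)\;\mbox{ for all }\;(x,y)\in X\times Y,\quad G(x):=\{g(x)\}\;\mbox{ for all }\;x\in X.
\]
Then $\mu(x)=\inf\{\varphi(x,y)\mid y\in G(x)\}=h(g(x))=(h\circ g)(x)$, the solution mapping $M$ coincides with $G$, and $M(\bar x)=\{\bar y\}$ with $\bar y=g(\bar x)$. It remains to verify the two assumptions of Theorem~\ref{thm5} and then compute the right-hand sides.

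For assumption $(\mathcal{A}_1)$, observe that $\ep\varphi=X\times\ep h$ and $\gr G=\gr g$, so the hypothesis of the theorem on the pair $(X\times\ep h,\gr g\times\mathbb R)$ at $(\bar x,\bar y,h(\bar y))$ is exactly $(Q_2)$ for $\ep\varphi$ and $\gr G\times\mathbb R$ at $(\bar x,\bar y,\varphi(\bar x,\bar y))$. For assumption $(\mathcal{A}_2)$, the calmness of $g$ at $\bar x$ means that the single-valued mapping $g$ itself satisfies the defining inequality \eqref{calm}, so $g$ is trivially an upper Lipschitzian selection of $M=G$ at $(\bar x,\bar y)$ in the sense of Definition~\ref{upper-lip}. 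Thus condition (c) of Theorem~\ref{thm192024} is in force, and Theorem~\ref{thm5} applies, yielding
\[
\widehat\partial(h\circ g)(\bar x)=\bigcup_{(x^*,y^*)\in\widehat\partial\varphi(\bar x,\bar y)}\bigl[x^*+\widehat D^*g(\bar x)(y^*)\bigr]
\]
and the analogous equality for $\widehat\partial^\infty(h\circ g)(\bar x)$.

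The next step is to identify $\widehat\partial\varphi(\bar x,\bar y)$ and $\widehat\partial^\infty\varphi(\bar x,\bar y)$. Since $\varphi$ depends only on the $y$ variable, a direct application of the definitions \eqref{eq:Fdef} and \eqref{eq:Fsin}, exploiting the freedom to let $x$ vary while keeping $y=\bar y$ (respectively $\beta=h(\bar y)$), forces the $x$-component of any such subgradient to vanish; fixing $x=\bar x$ and letting $y$ (and $\beta$) vary then recovers exactly the one-variable subgradient of $h$ at $\bar y$. Consequently,
\[
\widehat\partial\varphi(\bar x,\bar y)=\{0\}\times\widehat\partial h(\bar y),\qquad\widehat\partial^\infty\varphi(\bar x,\bar y)=\{0\}\times\widehat\partial^\infty h(\bar y).
\]

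Finally, since $g$ is calm at $\bar x$, the scalarization formula \eqref{scal} gives $\widehat D^*g(\bar x)(y^*)=\widehat\partial\langle y^*,g\rangle(\bar x)$ for every $y^*\in Y^*$. Substituting the two displays above into the marginal-function equalities produces exactly \eqref{eq:cr09} and \eqref{eq:cr10}. The main obstacle is the verification step for $\widehat\partial^\infty\varphi(\bar x,\bar y)$: the definition of the singular regular subdifferential mixes $\|y-\bar y\|$ with $|\beta-h(y)|$, and care is needed to ensure that by restricting to $y=\bar y$ and $\beta=h(\bar y)$ one legitimately concludes $x^*=0$, while restricting to $x=\bar x$ extracts precisely the condition defining $\widehat\partial^\infty h(\bar y)$; everything else is routine bookkeeping within the framework already built in Sections~\ref{sec:3} and~\ref{sec:4}.
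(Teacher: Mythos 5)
Your proposal is correct and follows essentially the same route as the paper: the paper's (very terse) proof also sets $\varphi(x,y):=h(y)$ and $G:=g$ and invokes Theorem~\ref{thm5}, with the identifications $\ep\varphi=X\times\ep h$, $\widehat\partial\varphi(\bar x,\bar y)=\{0\}\times\widehat\partial h(\bar y)$, $\widehat\partial^\infty\varphi(\bar x,\bar y)=\{0\}\times\widehat\partial^\infty h(\bar y)$, and the scalarization \eqref{scal} left implicit. Your only cosmetic deviation is verifying $(\mathcal{A}_2)$ via the upper Lipschitzian selection (c) rather than noting that calmness of $g$ directly gives the nearly-isolated calmness of (a)--(b); both are valid.
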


\begin{proof}
 For the marginal function $\mu(\cdot)$ in (\ref{1722023_1}), we consider 
 $\varphi: X \times Y\to \mathbb{ R}\cup\{\infty\}$ defined by 
 $\varphi(x,y) := h(y)$ for every $y\in Y$, and $G:=g$. Then both chain rules \eqref{eq:cr09} and \eqref{eq:cr10} follow from the corresponding results of
 Theorem~\ref{thm5}.  
\end{proof}

To get an effective consequence of Theorem~\ref{theo172024} in the case of real-valued functions $g$, the next lemma of its own interest is useful. The first equivalence below can be deduced from \cite[Theorem~1.86]{Mor2006} while we give another, {\em  variational proof}. The second equivalence seems never been mentioned in the literature.

\begin{lemma}[\bf regular normals to epigraphs and graphs]\label{lem172023}
 Let $f : X \to \mathbb{ \bar R}$ be finite at $\bar{ x}$, let $(\bar x, \alpha) 
\in \ep f$, and let $0\ne\lambda\in\mathbb R$. Then the following hold:
 \begin{equation}\label{eq:cr11}
(x^*, - \lambda) \in \widehat N \big((\bar{ x}, \alpha); \ep f\big) 
\Longleftrightarrow \lambda > 0, \, \alpha = f(\bar{x}), \, x^* \in 
\widehat {\partial} (\lambda f) (\bar{ x}),
\end{equation}
\begin{equation}\label{eq:cr12}
 (x^*, - \lambda) \in \widehat N\big((\bar{x}, f(\bar{x})); \gr f) 
\Longleftrightarrow   x^* \in \widehat {\partial} \left( 
\lambda f \right) (\bar{x}\big).
 \end{equation}
\end{lemma}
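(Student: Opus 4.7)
For (\ref{eq:cr11}), I would tackle both directions by going through the smooth variational description (Lemma~\ref{12202023_lem}). In the direction $\Leftarrow$, suppose $\lambda>0$, $\alpha=f(\bar x)$, and $x^*\in\widehat\partial(\lambda f)(\bar x)$. Lemma~\ref{12202023_lem} applied to $\lambda f$ produces a Fr\'echet-differentiable $s\colon U\to\mathbb R$ with $s(\bar x)=\lambda f(\bar x)$, $\nabla s(\bar x)=x^*$, and $s\le\lambda f$ on $U$. The lifted function $\tilde s(x,\beta):=s(x)-\lambda\beta$ is Fr\'echet differentiable at $(\bar x,f(\bar x))$ with $\nabla\tilde s(\bar x,f(\bar x))=(x^*,-\lambda)$ and $\tilde s(\bar x,f(\bar x))=0$; using $\lambda>0$ and $\beta\ge f(x)$ on $\ep f$, we get $\tilde s(x,\beta)\le s(x)-\lambda f(x)\le 0$ near the base point. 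A first-order Taylor expansion divided by $\|(x-\bar x,\beta-f(\bar x))\|$ then yields $(x^*,-\lambda)\in\widehat N((\bar x,f(\bar x));\ep f)$.

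For the direction $\Rightarrow$ of (\ref{eq:cr11}), I would feed vertical perturbations into the regular-normal inequality. Since $(\bar x,\alpha+t)\in\ep f$ for all $t\ge 0$, the direction $(0,t)$ forces $-\lambda\le 0$, hence $\lambda>0$ by the hypothesis $\lambda\ne 0$. If we had $\alpha>f(\bar x)$, the admissible perturbations $(\bar x,\alpha-t)$ for small $t>0$ would instead force $\lambda\le 0$, a contradiction, so $\alpha=f(\bar x)$. Positive homogeneity of $\widehat N$ then rescales $(x^*,-\lambda)$ into $(x^*/\lambda,-1)\in\widehat N((\bar x,f(\bar x));\ep f)$, which is exactly $x^*/\lambda\in\widehat\partial f(\bar x)$, i.e.\ $x^*\in\widehat\partial(\lambda f)(\bar x)$.

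For (\ref{eq:cr12}), the direction $\Leftarrow$ follows from (\ref{eq:cr11}) and the trivial inclusion $\gr f\subset\ep f$, which reverses to $\widehat N((\bar x,f(\bar x));\ep f)\subset\widehat N((\bar x,f(\bar x));\gr f)$. The direction $\Rightarrow$ is the principal new content: from $(x^*,-\lambda)\in\widehat N((\bar x,f(\bar x));\gr f)$, the definition supplies, for each $\varepsilon>0$, the inequality $\langle x^*,x-\bar x\rangle-\lambda(f(x)-f(\bar x))\le\varepsilon(\|x-\bar x\|+|f(x)-f(\bar x)|)$ for $x$ with $(x,f(x))$ in a small graph neighborhood. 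Splitting by the sign of $f(x)-f(\bar x)$ transfers the $\varepsilon|f(x)-f(\bar x)|$ term into a coefficient $\lambda\pm\varepsilon$ in front of $f(x)-f(\bar x)$; choosing $\varepsilon<|\lambda|$ keeps this coefficient of the same sign as $\lambda$, and isolation yields the subgradient inequality $\lambda(f(x)-f(\bar x))\ge\langle x^*,x-\bar x\rangle-\varepsilon'\|x-\bar x\|$ with $\varepsilon'\to 0$, i.e.\ $x^*\in\widehat\partial(\lambda f)(\bar x)$.

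The main obstacle lies precisely in this absorption step in (\ref{eq:cr12})$\Rightarrow$: the mixed term $\varepsilon|f(x)-f(\bar x)|$ coming from the graph seminorm has to be converted into a quantity controlled by $\|x-\bar x\|$ alone without any Lipschitz assumption on $f$, and the sign-splitting together with the calibration $\varepsilon<|\lambda|$ is what makes this conversion go through. A secondary technical point is that the inequality just derived covers only those $x$ whose graph point is in the prescribed neighborhood; the remaining $x$ in a (possibly smaller) ball around $\bar x$ are handled by observing that if $\lambda(f(x)-f(\bar x))$ is large in the favorable sign, the subgradient inequality is satisfied trivially, while the unfavorable sign is excluded by the graph-neighborhood condition once the ball around $\bar x$ is taken small enough.
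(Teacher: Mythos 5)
Your handling of \eqref{eq:cr11} is fine and in fact more direct than the paper's: for the implication ``$\Longrightarrow$'' you feed the vertical perturbations straight into the normal-cone inequality, whereas the paper first passes through the smooth variational description of Lemma~\ref{12202023_lem}; both give $\lambda>0$, $\alpha=f(\bar x)$, and then homogeneity finishes. For ``$\Longrightarrow$'' in \eqref{eq:cr12} the paper deduces from the graph inequality that $(x^*,-\lambda)$ (resp.\ $(x^*,\lambda)$ for $\lambda<0$) is a regular normal to $\ep f$ (resp.\ $\ep(-f)$) and then invokes \eqref{eq:cr11}, while you aim directly at the subgradient inequality; your sign-splitting/absorption with $\varepsilon<|\lambda|$ does work after rescaling by $\lambda/(\lambda\pm\varepsilon)$, which produces an admissible error of order $\varepsilon(\|x^*\|+|\lambda|)\|x-\bar x\|$. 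Two smaller remarks: the ``$\Longleftarrow$'' of \eqref{eq:cr12} for $\lambda<0$ is not covered by $\gr f\subset\ep f$ alone; you need the reflection $f\mapsto-f$ (hypograph) before applying \eqref{eq:cr11}.

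The genuine gap is your closing claim that, for points $x$ near $\bar x$ whose graph point lies outside the prescribed neighborhood, ``the unfavorable sign is excluded by the graph-neighborhood condition.'' Nothing excludes it: membership of $(x^*,-\lambda)$ in $\widehat N((\bar x,f(\bar x));\gr f)$ imposes no restriction at all on graph points that are vertically far from $(\bar x,f(\bar x))$, whereas the target inclusion $x^*\in\widehat\partial(\lambda f)(\bar x)$ must control \emph{every} $x$ in a ball around $\bar x$. Concretely, take $X=\mathbb R$, $f(0):=0$, $f(x):=-1/|x|$ for $x\ne 0$, and $\lambda=1$: the point $(0,0)$ is isolated in $\gr f$, so $\widehat N((0,0);\gr f)=\mathbb R^2$ and $(x^*,-1)$ is a regular normal for every $x^*$, yet $\widehat\partial f(0)=\emptyset$ since $f(x)-f(0)\to-\infty$ as $x\to 0$. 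So this step cannot be repaired without an additional hypothesis tying $f(x)$ to $f(\bar x)$ near $\bar x$ (lower semicontinuity at $\bar x$ suffices when $\lambda>0$; continuity or calmness handles both signs). To be fair, this is precisely the point at which the paper's own argument is cavalier, since it asserts the graph inequality \eqref{eq:55} for all $x\in\mathbb B(\bar x,\delta)$, which the definition of $\widehat N(\cdot;\gr f)$ does not deliver; in the paper's actual use of the lemma (Corollary~\ref{chain-cor}) the inner function is real-valued and calm at $\bar x$, and under that assumption your absorption argument (and the paper's route) goes through.
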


\begin{proof} 
 Observe that implication ``$\Longleftarrow$" is straightforward in  both cases of \eqref{eq:cr11} and \eqref{eq:cr12}.  To verify ``$\Longrightarrow$" in \eqref{eq:cr11}, take $(x^*, - \lambda) 
 \in \widehat N((\bar{ x},\alpha);\ep f)$ and find by Lemma~\ref{12202023_lem} a Fr\'echet differentiable function $s: X \times \mathbb{ R} 
 \rightarrow \mathbb{R}$ with $\nabla s(\bar{x}, \alpha) = (x^{\ast}, 
 -\lambda)$ and $s(x, r) \le s(\bar{x}, \alpha) = 0$ for all $(x, r) \in \ep f$. It follows from the Fr\'echet differentiability of $s(\cdot)$ that for every $\varepsilon > 0$ there exists $0 < \delta_1$ such that 
 \begin{equation*}
|s(x,r) - \langle(x^{\ast}, -\lambda), (x -\bar{x}, r-\alpha)\rangle\lvert \le 
\varepsilon(\|x -\bar{x}\|+|r-\alpha|)
 \end{equation*}
 for every $(x, r) \in \mathbb{B} 
 ((\bar{x}, \alpha), \delta_1)$. This yields 
\begin{equation}\label{12262023}
\langle(x^{\ast}, -\lambda), (x -\bar{x}, r-\alpha)\rangle\le \varepsilon 
(\|x -\bar{x}\|+|r-\alpha|)
\end{equation}
whenever  $(x, r) \in \mathbb{B} ((\bar{x}, \alpha), 
\delta_1) \cap \ep f$. Note that $(\bar{x}, \beta+\alpha)\in \ep f$ for any $\beta\ge 0$, and so for all small 
$\beta$ we have $(\bar{x}, \beta + \alpha) \in 
\mathbb{B} ((\bar{x}, \alpha), \delta_1) \cap \ep f$ and $\langle \beta, - 
\lambda \rangle \le \varepsilon \beta$. Thus $- \lambda \le \varepsilon$, and hence  $\lambda>0$ since $\varepsilon > 0$ was chosen arbitrarily. 

Next we show that $\alpha = f(\bar{x})$. Indeed, suppose on the contrary
that $\alpha > f(\bar{x})$. Then for any $\beta > 0$ satisfying $\beta < 
\delta_1$ and $\alpha - \beta > f(\bar{x})$, we have $(\bar{x}, \alpha - 
\beta) \in \mathbb{B} ((\bar{x}, \alpha), \delta_1) \cap \ep f$, and thus it 
follows from (\ref{12262023}) that $\lambda \beta \leq \varepsilon \beta$.
This brings us to the contradiction $\lambda \le 0$ by the arbitrary choice of $\varepsilon>0$.

To verify now implication ``$\Longrightarrow$'' in \eqref{eq:cr12}, pick $(x^*, - \lambda) \in\widehat N ( (\bar x,f ( \bar x));\gr f)$ with $\lambda\ne 0$. It follows from the definition that for each $\varepsilon>0$ there exists $\delta>0$ such that, whenever $x \in \mathbb B(\bar{x}, \delta)$, we have
\begin{align}\label{eq:55}
\langle x^*, x - \bar{x} \rangle + \langle - \lambda, f(x) - f(\bar{x}) \rangle 
\le \varepsilon \left(\|x - \bar x\| + |f(x) - f(\bar{x})| \right).
\end{align}
 To proceed further, consider the following two cases.\vspace*{0.05in}
 
 \noindent {\em Case~{\rm 1}, where $\lambda>0$}. Suppose without loss of generality that $\varepsilon<\lambda$. For any $x \in \mathbb{B} 
 (\bar{x}, \delta)$ with $(x, \alpha) \in \ep f$, we 
 deduce from \eqref{eq:55}  that
 \begin{align*}
 \langle x^*, x - \bar{x} \rangle - \langle \lambda, \alpha - f(\bar{x}) \rangle 
  & \le - \langle \lambda, \alpha - f(x) \rangle + \varepsilon \left( \|x - \bar x\| 
  + |f(x) - f(\bar{x})| \right) \\
  & \le \varepsilon \left(\|x - \bar x\| + |\alpha - f(\bar{x})| \right).
 \end{align*}
 This yields $(x^*, - \lambda) \in \widehat N ((\bar{x}, f(\bar{x})); \ep f)$, and thus it follows from \eqref{eq:cr11} that 
 $x^* \in \widehat {\partial} \left(\lambda f\right) (\bar{x})$, which justifies \eqref{eq:cr12} in this case. \vspace*{0.05in}
 
\noindent {\it Case~{\rm 2}, where $\lambda<0$.} We may assume that $\varepsilon<-\lambda$. For any $x \in \mathbb{B} (\bar{x}, \delta)$ with $(x, \alpha) \in \ep (-f)$, we derive from \eqref{eq:55} that
 \begin{align*}
\langle x^*, x - \bar{x} \rangle - \langle -\lambda, \alpha -(- f)(\bar{x}) \rangle 
& \le \langle \lambda, \alpha - (-f)(x) \rangle + \varepsilon \left( \|x - \bar x\| + |f(x) - f(\bar{x})| \right) \\
& \le \varepsilon \left(\|x - \bar x\| + |\alpha -(-f)(\bar{x})| \right).
 \end{align*}
 This yields $(x^*, \lambda) \in \widehat N ((\bar{x}, (-f)(\bar{x})); \ep (-f))$, which implies in turn that
 $x^* \in \widehat {\partial} \left(\lambda f\right) (\bar{x})$ due to \eqref{eq:cr11} and thus completes the proof of the lemma.  
\end{proof}

Finally, we are ready to derive, as a consequence of Theorem~\ref{theo172024}, the desired chain rules for both regular and singular regular subdifferentials of compositions with real-valued inner functions defined on normed spaces. 

\begin{corollary}[\bf chain rules with real-valued inner functions]\label{chain-cor} Let $h: \mathbb{R} \to \mathbb{\bar R}$ be finite at $ \bar y$ with 
$\bar y := g(\bar x)$, and let $g: X \to \mathbb{R}$ be calm at 
$\bar{x}$. Suppose that the sets $X\times \ep h$ and $\gr g \times 
\mathbb{R} $ satisfy condition \eqref{Q2} for $\widehat {\partial}$ 
at $(\bar{ x},\bar{ y},h(\bar{ y}))$. Then
\begin{align}
& ~~ {\widehat {\partial}} \left(h \circ g \right) (\bar{x}) = \left(\bigcup_{\lambda 
\in {\widehat {\partial}} h(\bar{y}),\,\lambda\ge 0} 
\lambda\widehat {\partial} g(\bar{x})\right)\cup\left(\bigcup_{\lambda 
\in {\widehat {\partial}} h(\bar{y}),\,\lambda< 0} 
\lambda\widehat {\partial}^+g(\bar{x})\right),\label{eq:cr13} \\
&{\widehat {\partial}^\infty} \left(h \circ g \right) (\bar{x}) = 
\left(\bigcup_{\lambda 
\in {\widehat {\partial}^\infty } h(\bar{y}),\,\lambda\ge 0} 
\lambda\widehat {\partial}g(\bar{x})\right)\cup\left(\bigcup_{\lambda 
\in {\widehat {\partial}^\infty } h(\bar{y}),\,\lambda< 0} 
\lambda\widehat {\partial}^+g(\bar{x})\right).\label{eq:cr15}
 \end{align}
\end{corollary}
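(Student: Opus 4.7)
The plan is to reduce Corollary~\ref{chain-cor} to a direct application of Theorem~\ref{theo172024}, exploiting the fact that $g$ is real-valued so that the duality pairing inside the scalarization becomes ordinary multiplication by a scalar. Since $Y=\mathbb{R}$, every $y^{\ast}\in\widehat{\partial}h(\bar{y})$ is simply a number $\lambda\in\mathbb{R}$, and the inner function $\langle y^{\ast},g\rangle$ coincides with $\lambda g$. With this identification, the chain rules \eqref{eq:cr09} and \eqref{eq:cr10} in Theorem~\ref{theo172024} become
$$
\widehat{\partial}(h\circ g)(\bar x)=\bigcup_{\lambda\in\widehat{\partial}h(\bar y)}\widehat{\partial}(\lambda g)(\bar x),\qquad \widehat{\partial}^{\infty}(h\circ g)(\bar x)=\bigcup_{\lambda\in\widehat{\partial}^{\infty}h(\bar y)}\widehat{\partial}(\lambda g)(\bar x),
$$
so the task reduces to computing $\widehat{\partial}(\lambda g)(\bar x)$ for each sign of $\lambda$.

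Next I would evaluate $\widehat{\partial}(\lambda g)(\bar x)$ by splitting on the sign of $\lambda$. For $\lambda>0$, the defining inequality \eqref{eq:Fdef} is preserved under multiplication by a positive constant, giving immediately $\widehat{\partial}(\lambda g)(\bar x)=\lambda\widehat{\partial}g(\bar x)$. For $\lambda<0$, I would write $\lambda g=|\lambda|(-g)$, use positive-scalar homogeneity to obtain $\widehat{\partial}(\lambda g)(\bar x)=|\lambda|\widehat{\partial}(-g)(\bar x)$, and then apply the definition \eqref{upper-sub} to rewrite $\widehat{\partial}(-g)(\bar x)=-\widehat{\partial}^{+}g(\bar x)$, yielding $\widehat{\partial}(\lambda g)(\bar x)=\lambda\widehat{\partial}^{+}g(\bar x)$. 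Alternatively, both formulas can be read off directly from the second equivalence \eqref{eq:cr12} of Lemma~\ref{lem172023}, which is precisely why that lemma is stated for both signs of $\lambda$.

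Finally, I would rewrite the overall union by partitioning $\widehat{\partial}h(\bar y)$ into its nonnegative and strictly negative parts, inserting the two expressions from the previous paragraph. This yields \eqref{eq:cr13}. The identical argument applied to the singular formula in Theorem~\ref{theo172024}, with $\widehat{\partial}h(\bar y)$ replaced by $\widehat{\partial}^{\infty}h(\bar y)$, gives \eqref{eq:cr15}.

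The main nuisance I expect is bookkeeping at the endpoint $\lambda=0$: one must adopt the convention $0\cdot\widehat{\partial}g(\bar x)=\{0\}$ (matching $\widehat{\partial}(0\cdot g)(\bar x)$) and verify that including $\lambda=0$ in the nonnegative branch does not spuriously enlarge or shrink the right-hand side. No genuinely new analytical content is required beyond Theorem~\ref{theo172024} and the scaling identities; the metric qualification condition and the calmness of $g$ are inherited hypotheses, so no additional qualification is needed.
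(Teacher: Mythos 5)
Your proposal is correct and follows essentially the same route as the paper: reduce to Theorem~\ref{theo172024} with $Y=\mathbb{R}$ so the scalarized terms become $\widehat{\partial}(\lambda g)(\bar{x})$, then split by the sign of $\lambda$ using positive homogeneity and definition \eqref{upper-sub}, handling $\lambda=0$ via the calmness of $g$ (the paper phrases the nonzero case through the coderivative and Lemma~\ref{lem172023}\eqref{eq:cr12}, which you cite as an alternative). Your explicit flag of the $\lambda=0$ convention matches the paper's Case~1 treatment, so nothing is missing.
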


\begin{proof} Observe that, by the scalarization formula \eqref{scal} under the imposed calmness assumption, the subgradient set standing on the right-hand sides of \eqref{eq:cr09} and \eqref{eq:cr10} reduces to $D^*g(\bar x)(y^*)$ with $y^*:=\lambda\in\R$ due to $Y=\mathbb{R}$ in the setting of the corollary. The coderivative definition \eqref{cod} tells us that $x^{*} \in \widehat {D}^{*} g(\bar{x}) (\lambda)$ if and only if $(x^{*}, - \lambda) \in \widehat N((\bar{x}, \bar{y}); \gr g)$. Consider the following two cases.\vspace*{0.05in} 
 
\noindent {\em Case~{\rm 1}, where $\lambda = 0$}. Since $g$ is calm at 
$\bar{x}$, we have $x^{*} = 0$ by scalarization \eqref{scal}.  Then the sets on the right-hand sides of \eqref{eq:cr13} and \eqref{eq:cr15} are obviously zeros, while the left-hand sides reduce to zeros by \eqref{eq:cr09} and \eqref{eq:cr10} in Theorem~\ref{theo172024}.\vspace*{0.05in}

\noindent {\em Case~{\rm 2}, where $\lambda\ne 0$}. In this case, it follows from \eqref{eq:cr12} in Lemma~\ref{lem172023} that $x^{*} \in\widehat {\partial} (\lambda g) (\bar{x})$. Using the latter inclusion and applying Theorem~\ref{theo172024} with taking into account definition \eqref{upper-sub} of the upper regular subdifferential, we arrive at both formulas \eqref{eq:cr13}, \eqref{eq:cr15} and thus complete the proof of the corollary.
\end{proof}\vspace*{-0.15in}

\section{Bilevel Programming in Asplund Spaces}\label{sec:bilevel}

This section is mainly devoted to deriving {\em necessary optimality conditions} in nonsmooth models of {\em optimistic bilevel programming}. Developing the {\em marginal functions approach} to bilevel programs and having in hands the results obtained above, we invoke here the following {\em two major ingredients} that are new in comparison with the previous sections. The {\em first} one is dealing with not general normed spaces while with the class of {\em Asplund spaces}, which is fairly broad (containing, in particular, every reflexive Banach spaces) and has been well recognized and applied in variational analysis. The {\em second} major ingredient of our developments is involving, along with regular normal/subdifferential constructions, the {\em limiting normals and subgradients} by Mordukhovich. 

Recall that a Banach space $X$ is {\em Asplund} if any separable subspace of $X$ has a separable dual. This class of spaces have been comprehensively investigated in geometric theory of Banach spaces admitting various characterizations and a great many applications to different aspects of variational analysis, optimization, and optimal control; see both volumes of \cite{Mor2006} and the references therein. Unless otherwise stated, all the spaces considered below are  assumed to be Asplund.\vspace*{0.05in}

Next we recall the limiting generalized differential constructions for general sets and functions introduced by the third author and largely used below; see, e.g.,  \cite{GO,Mor2006,Book_Penot,RW,Thibault} and their bibliographies.

\begin{definition}[\bf limiting normals and subgradients]\label{lim}
 {\rm Consider a  nonempty set $S \subset X$ and a function $f: X \rightarrow \bar{\mathbb{R}}$ with ${\rm dom}\,f<\infty$. Then:
\begin{itemize}
\item[\bf(i)] The {\it limiting normal cone} to $S$ at $\bar{x}\in S$ is defined by
\begin{align}\label{lnc}
N (\bar{x}; S) := \Lsup\limits_{x \rightarrow \bar{x}} \widehat N ({x}; S).
\end{align}

\item[\bf(ii)] The {\it limiting subdifferential} of $f$ at 
$\bar x\in{\rm dom}\,f$ is defined by
\begin{align}\label{lim-sub}
\partial f(\bar x) :=\big\{x^{*} \in X^{*}\;\big|\;(x^{*}, -1) \in N \big((\bar{x}, 
f(\bar x)\big);\ep f)\big\}.
\end{align}

\item[\bf(iii)] The {\it singular limiting subdifferential} of $f$ at $\bar{x}\in\dom f$ is defined by
\begin{align}\label{sin-lim}
\partial^\infty f(\bar x) :=\big\{x^{*} \in X^{*}\;\big|\; (x^{*}, 0) \in N 
\big((\bar{x}, f(\bar x)); \ep f\big)\big\}.
\end{align} 
\end{itemize}}
\end{definition}

Definition~\ref{lim}  presents geometric constructions of limiting and singular limiting  subgradients. There are several equivalent representations of the subdifferentials \eqref{lim-sub} and \eqref{sin-lim} that can be founded in the aforementioned books but are not employed in what follows. It is important to emphasize that the sets in \eqref{lnc}--\eqref{sin-lim} are often {\em nonconvex} while enjoying comprehensive {\em calculus rules} based on {\em variational/extremal principles} of variational analysis.\vspace*{0.05in}

Now we proceed with formulating the bilevel programming model of our study. The {\em lower-level problem} of parametric optimization is the following:
\begin{align}\label{POP_ll}
 \min \varphi (x, y) \quad \text{subject to $y \in G(x)$},
\end{align}
where the lower-level cost function $\varphi: X \times  Y \to\mathbb{ \bar R}$ depend on the decision variable $y\in Y$ and parameter $x\in X$, and where $G: X \rightrightarrows Y$ is the parameter-dependent constraint mapping. The {\em lower-level optimal value function} for \eqref{POP_ll} is the marginal function $\mu(\cdot)$ defined in \eqref{1722023_1}, and the corresponding {\em lower-level  solution mapping} $M(\cdot)$ is taken from \eqref{1722023_2}.

The {\em upper-level problem} is formulated as follows:
\begin{align*}
\min \psi (x, y) \quad \text{subject to $y \in M(x)$ for each 
$x \in \Omega \subset X$},
\end{align*}
where $\psi: X \times Y \to \mathbb{ R}$ is the upper-level cost function, and where the underlying, most involved constraint  $y \in M(x)$ is formed by the solution mapping $M(\cdot)$ associated with the lower-level problem \eqref{POP_ll}.

Using the marginal function approach, we finally define the hierarchical 
optimization problem known as the {\em optimistic bilevel model} (see, e.g.,
\cite{DMZ2012,DZ,Dempe2020,Ye2,Ye} with the discussions and references therein):
\begin{align}\label{OP}
\min\vartheta(x)\quad\text{subject to $x\in\Omega$, where  
$\vartheta(x):=\inf\left\{\psi(x,y)\,\,|\,\,y\in M(x)\right\}$}.
\end{align}
For simplicity, we consider below the case of \eqref{OP} with $\Omega=X$ and
\begin{align}\label{G}
G(x) :=\big\{y \in Y\;\big|\;g_i(x,y)\le 0\;\mbox{ for all }\;i=1,\ldots,p 
\big\}.
\end{align}
Having this in mind, we can rewrite \eqref{OP} in the following form:
\begin{align}\label{SLMP}
\min\psi(x,y)\quad\text{subject to $g_i(x,y)\le 0$, $i=1,\ldots,p$, and $\varphi(x,y)\le \mu(x)$,}
\end{align}
where the marginal function $\mu(\cdot)$ is intrinsically nonsmooth and generally nonconvex even for nice initial data $\varphi$, $\psi$, and $g_i$. It has been realized that standard constraint qualifications fail to fulfill in optimal solutions of \eqref{SLMP}. In this setting, a new constraint qualification  was introduced in \cite{Ye} under the name of ``partial calmness". To define it, consider the {\em perturbed} version of \eqref{SLMP} given by \begin{equation}\label{SLMP_1}
\begin{array}{ll}
\min \psi (x, y) \quad\text{subject to $g_i (x, y) \le 0$, $i=1,\ldots,p$,}\\ 
\text{and $\varphi (x, y) - \mu(x) + \nu = 0$ as $\nu \in \mathbb{ R}$.}
\end{array}
\end{equation}

\begin{definition}[\bf partial calmness]\label{par-calm} {\rm 
 Problem~\eqref{SLMP} is {\em partially calm} at its local optimal solution 
 $(\bar x, \bar y)$ with modulus $\kappa>0$ if there exists a neighborhood 
 $U$ of $(\bar x, \bar y, 0) \in X \times Y \times \mathbb{ R}$ such that
 \begin{align*}
\psi (x, y) - \psi (\bar x, \bar y) + \kappa |\nu| \ge 0
\end{align*}
 for all the triples $(x,y,\nu)\in U$ feasible to \eqref{SLMP_1}.}
\end{definition} 

We refer the reader to \cite{BM1,BM2,DMZ2012,DZ,HS,MMZ,M2018,Dempe2020,Ye} for various explicit conditions ensuring partial calmness. The essence of partial calmness is the possibility of {\em exact penalization} of the troublesome constraint $\varphi(x,y)\le\mu(x)$ in \eqref{SLMP}, which was observed in \cite{Ye} in finite-dimensional spaces. The following statement is taken from \cite[Proposition~7.8.12]{Dempe2020}, where the proof holds in general normed spaces.

\begin{lemma}[\bf exact penalization]\label{PP}
Let \eqref{SLMP} be partially calm at its local optimal 
solution $(\bar x, \bar y)$ with modulus $\kappa>0$, and let $\psi$ be
continuous at $(\bar x, \bar y)$. Then $(\bar x, \bar y)$ is a local optimal 
solution to the penalized problem
\begin{equation*}\label{SLP_P}
\begin{array}{ll}
& \min \psi(x, y) + \kappa\big(\varphi(x, y) - \mu(x)\big)\\ 
& {\rm subject\; to }\;g_i(x,y)\le 0,\;i=1,\ldots,p.
\end{array}
\end{equation*}
\end{lemma}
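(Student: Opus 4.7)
The plan is a classical exact penalization argument: show that any attempt to beat the penalized objective value at $(\bar x,\bar y)$ must force the penalty term to zero, and then invoke partial calmness.

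First I would make the elementary observation that, by the definition of $G$ in \eqref{G}, the constraint $g_i(x,y)\le 0$ for $i=1,\dots,p$ is equivalent to $y\in G(x)$. For any such $(x,y)$, the definition of $\mu(x)$ as the infimum of $\varphi(x,\cdot)$ over $G(x)$ forces $\varphi(x,y)-\mu(x)\ge 0$, so the penalty term is nonnegative on the feasible set of the penalized problem. Moreover, since $\bar y\in M(\bar x)$ we have $\varphi(\bar x,\bar y)=\mu(\bar x)$, which means the penalty vanishes at $(\bar x,\bar y)$ and the value of the penalized objective there is exactly $\psi(\bar x,\bar y)$.

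Next I would argue by contradiction. Suppose $(\bar x,\bar y)$ is not a local minimizer of the penalized problem. Then there is a sequence $(x_k,y_k)\to(\bar x,\bar y)$ satisfying $g_i(x_k,y_k)\le 0$ for every $i$ and
\[
\psi(x_k,y_k)+\kappa\bigl(\varphi(x_k,y_k)-\mu(x_k)\bigr)<\psi(\bar x,\bar y).
\]
Rearranging this inequality and invoking the continuity of $\psi$ at $(\bar x,\bar y)$, we obtain
\[
0\le\kappa\bigl(\varphi(x_k,y_k)-\mu(x_k)\bigr)<\psi(\bar x,\bar y)-\psi(x_k,y_k)\longrightarrow 0,
\]
so the nonnegative penalty term tends to zero. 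Setting $\nu_k:=\mu(x_k)-\varphi(x_k,y_k)$, the triple $(x_k,y_k,\nu_k)\to(\bar x,\bar y,0)$ is by construction feasible for the perturbed problem \eqref{SLMP_1}. Hence for $k$ large enough $(x_k,y_k,\nu_k)$ lies in the partial calmness neighborhood $U$, and Definition \ref{par-calm} yields $\psi(x_k,y_k)-\psi(\bar x,\bar y)+\kappa|\nu_k|\ge 0$, i.e.\
\[
\psi(x_k,y_k)+\kappa\bigl(\varphi(x_k,y_k)-\mu(x_k)\bigr)\ge \psi(\bar x,\bar y),
\]
directly contradicting the choice of the sequence.

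The one delicate point, and what I expect to be the main obstacle, is precisely the step where $\nu_k\to 0$: without the continuity of $\psi$ one could not exclude that the penalty $\varphi(x_k,y_k)-\mu(x_k)$ stays bounded away from zero while the total penalized objective still drops below $\psi(\bar x,\bar y)$, in which case we would have no right to push $(x_k,y_k,\nu_k)$ into the calmness neighborhood $U$. Continuity of $\psi$ is exactly the ingredient that links ``the penalized objective is small'' to ``the perturbation $\nu_k$ is small,'' which is what allows the partial calmness inequality to be activated.
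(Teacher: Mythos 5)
Your argument is correct: the nonnegativity of the penalty on the feasible set, the continuity of $\psi$ forcing $\nu_k:=\mu(x_k)-\varphi(x_k,y_k)\to 0$, and then the activation of the partial calmness inequality from Definition~\ref{par-calm} for the feasible triples $(x_k,y_k,\nu_k)$ of \eqref{SLMP_1} is exactly the classical exact-penalization argument behind this result. The paper itself gives no proof, importing the statement from \cite{Dempe2020} (originating with \cite{Ye}), and your reasoning reproduces that standard proof; the only tacit point worth flagging is the implicit finiteness of $\mu$ along the sequence, which is automatic here since a value $\mu(x_k)=-\infty$ would make the penalized objective $+\infty$ and exclude $(x_k,y_k)$ from the contradiction sequence.
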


The next lemma, valid in normed spaces, is taken from \cite[Proposition~4.1]{MNY2006}.

\begin{lemma}[\bf optimality conditions for difference functions]\label{DR}
 Let $\varphi_1, \varphi_2: X \to \mathbb{\bar R}$ be functions
 finite at $\bar{x}$, and let $\widehat{\partial} \varphi_2(\bar{ x}) \ne
\emptyset$. Then any local minimizer $\bar x$ of the difference function
 $\varphi_1-\varphi_2$ satisfies the regular subdifferential inclusion
\begin{align}\label{NC_DR}
\widehat{\partial} \varphi_1 (\bar x) \subset \widehat{\partial}
\varphi_2 (\bar{ x}).
\end{align}
\end{lemma}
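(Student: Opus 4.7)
The plan is to deduce the inclusion from the smooth variational description of regular subgradients (Lemma~\ref{12202023_lem}), combining it with the local-minimum condition on $\varphi_1-\varphi_2$ so as to move smooth one-sided supports between the two functions. The nonemptiness hypothesis $\widehat\partial\varphi_2(\bar x)\neq\emptyset$ is the link that makes this transfer possible in the direction required for the stated inclusion, since without it there is no ``anchor'' smooth minorant of $\varphi_2$ at $\bar x$ from which to start.

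Step 1 is to fix an arbitrary $x^{*}\in\widehat\partial\varphi_1(\bar x)$ and, by Lemma~\ref{12202023_lem}, produce a neighborhood $U$ of $\bar x$ together with a function $s_1\colon U\to\R$ that is Fr\'echet differentiable at $\bar x$ and satisfies $s_1(\bar x)=\varphi_1(\bar x)$, $\nabla s_1(\bar x)=x^{*}$, and $s_1\le\varphi_1$ on $U$. Step 2 is to use the nonemptiness assumption to pick some $y^{*}\in\widehat\partial\varphi_2(\bar x)$ and extract, again via Lemma~\ref{12202023_lem}, a smooth minorant $s_2\le\varphi_2$ with $s_2(\bar x)=\varphi_2(\bar x)$ and $\nabla s_2(\bar x)=y^{*}$. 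Step 3 is to combine the local-minimum inequality
\[
\varphi_1(x)-\varphi_1(\bar x)\ge\varphi_2(x)-\varphi_2(\bar x)\quad\text{for }x\text{ near }\bar x
\]
with $s_1\le\varphi_1$ and $s_2\le\varphi_2$ to build a shifted smooth candidate
\[
\tilde s(x):=\varphi_2(\bar x)+s_2(x)-s_2(\bar x)+\langle x^{*}-y^{*},x-\bar x\rangle,
\]
which is Fr\'echet differentiable at $\bar x$ with $\tilde s(\bar x)=\varphi_2(\bar x)$ and $\nabla\tilde s(\bar x)=x^{*}$. Step 4 is to verify $\tilde s\le\varphi_2$ on a neighborhood of $\bar x$ and conclude $x^{*}\in\widehat\partial\varphi_2(\bar x)$ by the converse direction of Lemma~\ref{12202023_lem}.

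The main obstacle is Step 4. The bound $s_2\le\varphi_2$ handles every term in $\tilde s-\varphi_2$ except the linear correction $\langle x^{*}-y^{*},x-\bar x\rangle$, so the real content of the proof is showing that this linear discrepancy is absorbed near $\bar x$. Concretely, Fr\'echet differentiability of $s_1,s_2$ at $\bar x$ gives
\[
s_1(x)-s_1(\bar x)-\bigl(s_2(x)-s_2(\bar x)\bigr)=\langle x^{*}-y^{*},x-\bar x\rangle+o(\|x-\bar x\|),
\]
and one tries to bound the left-hand side above by $\varphi_1(x)-\varphi_1(\bar x)-\bigl(\varphi_2(x)-\varphi_2(\bar x)\bigr)$, then apply the local-minimum inequality to push this further. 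The delicate point is that the minimizer inequality goes in the direction $\varphi_2(x)-\varphi_2(\bar x)\le\varphi_1(x)-\varphi_1(\bar x)$, i.e.\ it upper-bounds $\varphi_2$ by $\varphi_1$; so controlling $\langle x^{*}-y^{*},x-\bar x\rangle$ in the correct sign, uniformly in the direction of $x-\bar x$, is what closes the argument. This is the step that genuinely uses both hypotheses simultaneously: the local minimum to chain the two smooth supports, and $\widehat\partial\varphi_2(\bar x)\neq\emptyset$ to have the second smooth support at all. Once this sign control is established, the conclusion $x^{*}\in\widehat\partial\varphi_2(\bar x)$ follows immediately from \eqref{eq:Fdef} and the claimed inclusion $\widehat\partial\varphi_1(\bar x)\subset\widehat\partial\varphi_2(\bar x)$ is proved.
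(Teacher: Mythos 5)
Your Step~4 is not a technical loose end to be tightened later: it is the point where the argument genuinely collapses, and it cannot be repaired because the inclusion \eqref{NC_DR} as literally printed is false. Take $X=\mathbb{R}$, $\varphi_1(x)=|x|$, $\varphi_2\equiv 0$, $\bar x=0$: then $\bar x$ minimizes $\varphi_1-\varphi_2$ and $\widehat\partial\varphi_2(\bar x)=\{0\}\ne\emptyset$, yet $\widehat\partial\varphi_1(\bar x)=[-1,1]\not\subset\{0\}$. In your construction the failure appears exactly where you predicted: with $x^*=1$ and $y^*=0$ one gets $\tilde s(x)=x$, and $\tilde s\le\varphi_2=0$ fails on every neighborhood of the origin. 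The sign control you hoped for is unavailable because, as you yourself observed, the local-minimum inequality bounds $\varphi_2$ from above by $\varphi_1$ (up to constants), which is the wrong direction for manufacturing a minorant of $\varphi_2$ from data attached to $\varphi_1$. The statement of Lemma~\ref{DR} contains a misprint: the correct conclusion --- which is \cite[Proposition~4.1]{MNY2006}, the source the paper cites without reproducing a proof, and which is the inclusion actually invoked later in the proof of Theorem~\ref{NOC:Asplund} --- is the reverse one, $\widehat\partial\varphi_2(\bar x)\subset\widehat\partial\varphi_1(\bar x)$; the hypothesis $\widehat\partial\varphi_2(\bar x)\ne\emptyset$ is there only to make that conclusion nonvacuous, not to anchor a transfer in the direction you attempted.

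For the correct inclusion no smooth variational description is needed; the proof is two lines from definition \eqref{eq:Fdef}. Fix $y^*\in\widehat\partial\varphi_2(\bar x)$ and $\varepsilon>0$, and shrink $\gamma>0$ so that on $\mathbb{B}(\bar x,\gamma)$ both the subgradient inequality for $\varphi_2$ and the local-minimum inequality $\varphi_1(x)-\varphi_1(\bar x)\ge\varphi_2(x)-\varphi_2(\bar x)$ hold; then
\[
\varphi_1(x)-\varphi_1(\bar x)-\langle y^*,x-\bar x\rangle\ge\varphi_2(x)-\varphi_2(\bar x)-\langle y^*,x-\bar x\rangle\ge-\varepsilon\|x-\bar x\|,
\]
so $y^*\in\widehat\partial\varphi_1(\bar x)$. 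If you wish to keep your architecture, note that it proves exactly this direction with no Step~4 at all: the smooth minorant $s_2$ of $\varphi_2$ provided by Lemma~\ref{12202023_lem}, shifted by the constant $\varphi_1(\bar x)-\varphi_2(\bar x)$, becomes a smooth minorant of $\varphi_1$ at $\bar x$ by the local-minimum inequality, whence $y^*=\nabla s_2(\bar x)\in\widehat\partial\varphi_1(\bar x)$ directly.
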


The last lemma here, taken from  \cite[Theorems~3.8 and 
3.86]{Mor2006}, gives us a useful representation of limiting normals to sets defined by inequality constraints.

\begin{lemma}[\bf limiting normals to inequality constraints]\label{Lem:Lipsch}
Let $g_i\colon X\times Y\to\mathbb{R}$ be locally Lipschitzian around $({\bar{ x}}, \bar{y})$, and let $G_i:=\{(x,y)\in X\times Y\,\,|\,\, g_i(x,y)\le 0 \}$ for all $i=1,\ldots, p$. Denote $I (\bar x, \bar{y}): = \{i \in \{1, \ldots, p\} \,\,|\,\, g_i (\bar x, \bar{y}) = 0\}$ and impose the constraint 
qualification condition
\begin{equation}\label{BCQ}
\begin{array}{ll}
\displaystyle\Big[\sum_{i \in I (\bar x, \bar{ y})} \lambda_i (x^*_i, y^*_i) = 0, \, (x^*_i, y^*_i) \in 
\partial g_i (\bar{ x}, \bar{ y}),\, \lambda_i \ge 0,\, i \in I (\bar{ x},\bar{ y})\Big]\\ 
\Longrightarrow\big[\lambda_i = 0\;\mbox{ for all }\;i \in I (\bar{ x}, \bar{ y})\big].
\end{array}
\end{equation}
Then for any $(x^*, y^*) \in  N \left((\bar x,\bar y);\bigcap_{i=1}^{p} G_i \right)$
we have the inclusion
\begin{align*}
(x^*, y^*)  \in \sum_{i \in I (\bar x,\bar{ y})} \lambda_i \partial g_i(\bar x,\bar{ y}) 
\end{align*}
with $\lambda_i \ge 0$ whenever $i\in I(\bar x,\bar y)$.
\end{lemma}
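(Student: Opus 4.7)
The plan is to establish this lemma in three steps: reduce to active indices, apply the intersection formula for the limiting normal cone in Asplund spaces, and then translate each normal cone to a sublevel set into limiting subgradients of the corresponding Lipschitz function. Throughout, I interpret the $\Omega_i$ in the statement as $G_i:=\{(x,y)\,|\,g_i(x,y)\le 0\}$ and take $x^*\in N((\bar{x},\bar{y});\bigcap_{i=1}^{p}G_i)$.

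First, I would eliminate inactive indices. For each $i\notin I(\bar x,\bar y)$ we have $g_i(\bar x,\bar y)<0$, and local Lipschitz continuity of $g_i$ yields a neighborhood $V_i$ of $(\bar x,\bar y)$ on which $g_i<0$, whence $V_i\subset G_i$. Taking a finite intersection produces a neighborhood $U$ of $(\bar x,\bar y)$ satisfying $\bigl(\bigcap_{i=1}^{p}G_i\bigr)\cap U=\bigl(\bigcap_{i\in I(\bar x,\bar y)}G_i\bigr)\cap U$. Since the limiting normal cone \eqref{lnc} is defined as a sequential outer limit of regular normal cones at nearby points, this local coincidence implies
\[
N\Bigl((\bar x,\bar y);\bigcap_{i=1}^{p}G_i\Bigr)
= N\Bigl((\bar x,\bar y);\bigcap_{i\in I(\bar x,\bar y)}G_i\Bigr).
\]

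Second, I would invoke the intersection formula for the limiting normal cone in Asplund spaces (as given in \cite{Mor2006}), which under the normal qualification condition
\[
\Big[\sum_{i\in I(\bar x,\bar y)}x^*_i=0,\; x^*_i\in N\bigl((\bar x,\bar y);G_i\bigr)\Big]\Longrightarrow x^*_i=0\;\text{for every active}\;i
\]
yields the inclusion $N\bigl((\bar x,\bar y);\bigcap_{i\in I(\bar x,\bar y)}G_i\bigr)\subset \sum_{i\in I(\bar x,\bar y)} N\bigl((\bar x,\bar y);G_i\bigr)$. I would then apply, for each active $i$, the standard upper estimate for limiting normals to sublevel sets of Lipschitz functions vanishing at the reference point, namely $N((\bar x,\bar y);G_i)\subset\bigcup_{\lambda\ge 0}\lambda\,\partial g_i(\bar x,\bar y)$, valid in Asplund spaces for locally Lipschitz $g_i$. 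Substituting into the intersection inclusion gives the claimed representation $x^*\in\sum_{i\in I(\bar x,\bar y)}\lambda_i\partial g_i(\bar x,\bar y)$ with $\lambda_i\ge 0$.

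Finally, I would verify that \eqref{BCQ} implies the normal qualification condition needed for the intersection rule. Taking $x^*_i\in N((\bar x,\bar y);G_i)$ with $\sum_i x^*_i=0$, the upper estimate above writes each $x^*_i$ as $\lambda_i\tilde x^*_i$ with $\lambda_i\ge 0$ and $\tilde x^*_i\in\partial g_i(\bar x,\bar y)$; then $\sum_i\lambda_i\tilde x^*_i=0$ forces $\lambda_i=0$ for all $i$ by \eqref{BCQ}, hence $x^*_i=0$. The main obstacle in this plan is making the intersection rule precise in the Asplund space setting: it rests on the extremal principle and a fuzzy sum rule, and one must also ensure the sequential normal compactness (SNC) property required to pass from regular to limiting normals in the sum, a property that in our setting is automatic because the Lipschitz constraints $g_i$ render each $G_i$ SNC at $(\bar x,\bar y)$ under \eqref{BCQ}.
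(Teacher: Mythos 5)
The paper offers no proof of this lemma at all: it is imported verbatim from \cite[Theorems~3.8 and 3.86]{Mor2006}, so your reconstruction can only be compared with the standard argument behind those cited results --- and your outline (discard inactive indices by locality of the limiting normal cone, apply the intersection rule for limiting normals in Asplund spaces under a normal qualification condition plus SNC, estimate each $N((\bar x,\bar y);G_i)$ via subgradients of $g_i$, and deduce the qualification condition from \eqref{BCQ}) is exactly that route.

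There is, however, one step stated in a form that is false in general and that hides where \eqref{BCQ} must first be used. The level-set estimate $N((\bar x,\bar y);G_i)\subset\bigcup_{\lambda\ge 0}\lambda\,\partial g_i(\bar x,\bar y)$ is \emph{not} valid merely because $g_i$ is locally Lipschitzian: for $g(x)=x^3$ at $\bar x=0$ the set $\{g\le 0\}=(-\infty,0]$ has limiting normal cone $[0,\infty)$, while $\bigcup_{\lambda\ge 0}\lambda\,\partial g(0)=\{0\}$. Both this estimate and the SNC property of $G_i=g_i^{-1}((-\infty,0])$ that you invoke (obtained from the SNC calculus for inverse images under Lipschitzian mappings) require the nondegeneracy condition $0\notin\partial g_i(\bar x,\bar y)$ for the active indices. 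This condition does follow from \eqref{BCQ}, but you never derive it: since each $\partial g_j(\bar x,\bar y)\ne\emptyset$ for locally Lipschitzian functions on Asplund spaces, if $0\in\partial g_i(\bar x,\bar y)$ for some active $i$ you may take $\lambda_i=1$, $x_i^*=0$, and $\lambda_j=0$ with arbitrary $x_j^*\in\partial g_j(\bar x,\bar y)$ for the remaining active $j$, contradicting \eqref{BCQ}. With this observation inserted before you call on the level-set estimate and the SNC property, your verification that \eqref{BCQ} yields the normal qualification condition for the intersection rule is fine, and the rest of the argument goes through as you describe (note also that the $\Omega_i$ and the argument $\bar x$ in the statement should be read as $G_i$ and $(\bar x,\bar y)$, as you correctly did).
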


Now we are ready to derive necessary optimality conditions for optimistic bilevel programs in arbitrary Asplund spaces. 

\begin{theorem}[\bf necessary optimality conditions for bilevel programs]
\label{NOC:Asplund} $\,$ Let $(\bar x,\bar y)$ be a local optimal solution to the bilevel program \eqref{SLMP}, where $\varphi$ is lower semicontinuous while $g_i$ for $i=1,\ldots,p$ are locally Lipschitzian around $(\bar x, \bar y)$, where $\psi$ is Fr\'echet differentiable at this point, and where \eqref{SLMP} is partially calm at $(\bar x, \bar y)$ with modulus $\kappa>0$. Suppose in addition that the constraint qualification \eqref{BCQ} holds, that
$\widehat{\partial} \mu(\bar x) \ne \emptyset$, and that
\begin{itemize}
\item[\bf$(\cal{A}$)] the sets ${\rm epi}\,\varphi$ and 
$\gr G \times \mathbb{R}$ for $G$ from \eqref{G} satisfy the metric qualification condition \eqref{Q2} for $ \partial^{\textasteriskcentered}=\partial$  at $(\bar{x}, \bar{y}, \varphi(\bar{x}, \bar{y}))$.
\end{itemize}
Then for each $u^* \in \widehat{\partial}\mu(\bar x)$, there exist 
multipliers $\lambda_i$ and $\beta_i$ with $i=1,\ldots,p$ such that we have the following necessary optimality conditions:
\begin{align}\label{NOC:1}
(u^*,0) \in \partial \varphi(\bar{x}, \bar{y}) + \sum_{i=1}^{p} \beta_i 
\partial g_i (\bar{ x}, \bar y), 
\end{align}
\begin{align}\label{NOC:2}
(u^*, 0) \in \kappa^{-1} \nabla \psi (\bar{x}, \bar{y}) + \partial \varphi 
(\bar{x}, \bar{y}) + \sum_{i=1}^{p} \lambda_i \partial g_i (\bar{x}, 
\bar y), 
\end{align}
\begin{align}\label{CSC}
\lambda_i,\beta_i \ge 0, \quad \lambda_i g_i (\bar{x}, 
\bar{y}) = \beta_i g_i (\bar{x}, \bar{y}) = 0\;\mbox{ as }\;i=1,\ldots, p.
\end{align}
\end{theorem}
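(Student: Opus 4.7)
The plan is to split the proof into two parallel threads—one producing \eqref{NOC:1} and the other \eqref{NOC:2}—both built around the same core move: rewriting $(\bar x,\bar y)$ as a minimizer of a difference of two functions whose subtracted term has nonempty regular subdifferential, so that Lemma~\ref{DR} transfers the given $u^{\ast}\in\widehat\partial\mu(\bar x)$ into a regular subgradient at $(\bar x,\bar y)$ of the function being minimized. After that, both threads end with the same calculus chain: passage to the limiting subdifferential, a sum-rule consequence of the metric qualification $(\mathcal A)$, and the normal-cone decomposition for inequality constraints furnished by Lemma~\ref{Lem:Lipsch} under \eqref{BCQ}.

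For \eqref{NOC:1}, the inequality $\varphi(x,y)\ge\mu(x)$ on $\gr G$, combined with the equality $\varphi(\bar x,\bar y)=\mu(\bar x)$, identifies $(\bar x,\bar y)$ as a global minimizer of $[\varphi+\delta_{\gr G}](x,y)-\mu(x)$. Here $\mu$ is regarded as a function of the pair $(x,y)$ that is independent of $y$, so its regular subdifferential at $(\bar x,\bar y)$ equals $\widehat\partial\mu(\bar x)\times\{0\}$, which is nonempty by assumption. Lemma~\ref{DR} then places $(u^{\ast},0)\in\widehat\partial(\varphi+\delta_{\gr G})(\bar x,\bar y)\subset\partial(\varphi+\delta_{\gr G})(\bar x,\bar y)$. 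The metric qualification $(\mathcal A)$, rewritten via the distance-subgradient/normal-cone dictionary of Lemmas~\ref{lem2722023_1}--\ref{12202023_lem2} in the Asplund setting, produces the limiting sum rule
\[\partial(\varphi+\delta_{\gr G})(\bar x,\bar y)\subset\partial\varphi(\bar x,\bar y)+N((\bar x,\bar y);\gr G),\]
and Lemma~\ref{Lem:Lipsch} applied to $\gr G=\bigcap_{i=1}^p\{g_i\le 0\}$ decomposes $N((\bar x,\bar y);\gr G)\subset\sum_i\beta_i\partial g_i(\bar x,\bar y)$ with $\beta_i\ge 0$ and $\beta_i g_i(\bar x,\bar y)=0$. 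This yields \eqref{NOC:1} together with the $\beta$-part of \eqref{CSC}.

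For \eqref{NOC:2}, Lemma~\ref{PP} first converts partial calmness into the fact that $(\bar x,\bar y)$ is a local minimizer of
\[\psi(x,y)+\kappa\varphi(x,y)+\delta_{\gr G}(x,y)-\kappa\mu(x).\]
This is again a difference minimized at $(\bar x,\bar y)$ whose subtracted term $\kappa\mu$ has nonempty regular subdifferential, so Lemma~\ref{DR} gives $(\kappa u^{\ast},0)\in\widehat\partial[\psi+\kappa\varphi+\delta_{\gr G}](\bar x,\bar y)\subset\partial[\psi+\kappa\varphi+\delta_{\gr G}](\bar x,\bar y)$. The exact smooth-plus-nonsmooth sum rule peels $\nabla\psi(\bar x,\bar y)$ off (since $\psi$ is Fr\'echet differentiable there), and rerunning on $\kappa\varphi+\delta_{\gr G}$ the same calculus chain as in the previous paragraph produces multipliers $\widetilde\lambda_i\ge 0$ with $\widetilde\lambda_i g_i(\bar x,\bar y)=0$ such that $(\kappa u^{\ast},0)\in\nabla\psi(\bar x,\bar y)+\kappa\partial\varphi(\bar x,\bar y)+\sum_i\widetilde\lambda_i\partial g_i(\bar x,\bar y)$. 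Dividing by $\kappa$ and setting $\lambda_i:=\widetilde\lambda_i/\kappa$ delivers \eqref{NOC:2} and the remaining part of \eqref{CSC}.

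The main obstacle I anticipate is making the transition from the distance-function formulation of $(\mathcal A)$, stated for $\ep\varphi$ and $\gr G\times\R$ inside the enlarged product $X\times Y\times\R$, to the subgradient sum rule $\partial(\varphi+\delta_{\gr G})\subset\partial\varphi+N(\gr G)$ on $X\times Y$. The route uses the epigraphical identity $\ep(\varphi+\delta_{\gr G})=\ep\varphi\cap[\gr G\times\R]$, the normal/distance-subgradient equivalences of Lemmas~\ref{lem2722023_1}--\ref{12202023_lem2}, a standard limiting passage available in Asplund spaces, and the projection from normals in the enlarged product to subgradients via the definition \eqref{lim-sub}; this is essentially the content of the results referenced in \cite{HuxuAn,jour}. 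Once this sum rule is secured, the rest of the argument is routine multiplier bookkeeping.
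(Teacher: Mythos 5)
Your proposal is correct and follows essentially the paper's own route: exact penalization via Lemma~\ref{PP}, the difference rule of Lemma~\ref{DR} to transfer $u^*\in\widehat\partial\mu(\bar x)$, the elementary regular sum rule to split off the Fr\'echet differentiable $\psi$, the normal-cone intersection rule for $\ep\varphi\cap[\gr G\times\mathbb R]$ under $(\mathcal A)$, and Lemma~\ref{Lem:Lipsch} under \eqref{BCQ} to produce the multipliers in \eqref{NOC:1}--\eqref{CSC}. The only genuine deviation is how you obtain $(\widehat\partial\mu(\bar x),0)\subset\widehat\partial(\varphi+\delta_{\gr G})(\bar x,\bar y)$, the source of \eqref{NOC:1}: you observe that $(\bar x,\bar y)$ globally minimizes $(\varphi+\delta_{\gr G})(x,y)-\mu(x)$ and apply Lemma~\ref{DR} again, whereas the paper cites a marginal-function result of Penot; the two are equivalent and your version is more self-contained. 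The step you flag as the main obstacle---passing from $(\mathcal A)$ to the limiting decomposition $\partial(\varphi+\delta_{\gr G})(\bar x,\bar y)\subset\partial\varphi(\bar x,\bar y)+N((\bar x,\bar y);\gr G)$ through the identity $\ep(\varphi+\delta_{\gr G})=\ep\varphi\cap[\gr G\times\mathbb R]$---is exactly what the paper outsources to \cite[Theorem~3.1]{HuxuAn}, so your sketch of that route is adequate. One ordering caveat in your second thread: peel $\nabla\psi(\bar x,\bar y)$ off \emph{before} passing to the limiting subdifferential, i.e., use the exact regular sum rule $\widehat\partial(\psi+\kappa\varphi+\delta_{\gr G})(\bar x,\bar y)=\nabla\psi(\bar x,\bar y)+\widehat\partial(\kappa\varphi+\delta_{\gr G})(\bar x,\bar y)$ (valid for merely Fr\'echet differentiable $\psi$) and only afterwards enlarge $\widehat\partial(\kappa\varphi+\delta_{\gr G})$ to $\kappa\partial(\varphi+\delta_{\gr G})$; splitting a function that is only Fr\'echet (not strictly) differentiable off a \emph{limiting} subdifferential is not valid in general, and the paper's chain \eqref{1932024_eq:1} is arranged precisely in this order. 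With that reordering, your argument matches the paper's proof step for step.
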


\begin{proof}
From Lemma~\ref{PP} and the infinite constraint penalization via the
indicator function $\delta(\cdot; \gr G)$, we get that the pair $(\bar x, \bar y)$ be a local optimal solution to the single-objective unconstrained problem
\begin{align}\label{3192024}
\min\big[\psi (x, y) + \kappa\big( \varphi (x, y) - \mu(x)\big) + \delta
(\cdot, \gr G)\big].
\end{align}
Using the necessary optimality condition \eqref{NC_DR} from 
Lemma~\ref{DR} for the difference function in \eqref{3192024} under $\widehat\partial\mu(\bar x)\ne\emptyset$ together with the elementary sum rule for regular subgradients in
\cite[Proposition~1.107(i)]{Mor2006}, we have
\begin{equation}\label{1932024_eq:1}
\begin{array}{ll}
\big( \kappa \widehat{\partial} \mu(\bar{x}), 0 \big) & \subset
\nabla \psi (\bar x, \bar y) + \widehat{\partial}\big( \kappa
\varphi (\cdot) + \delta(\cdot,\gr G)\big) (\bar x, \bar y)\\
& \subset \nabla \psi (\bar x, \bar y) + \kappa {\partial} \big(
\varphi (\cdot) + \delta (\cdot, \gr G)\big) (\bar x, \bar y).
\end{array}
\end{equation}
Moreover, recalling that $\mu(x)=\inf_{y\in Y}(\varphi(\cdot)+\delta(\cdot,\gr G))(x,y)$ and then applying \cite[Theorem 4.47]{Book_Penot} tell us that
\begin{equation}\label{1932024_eq:2}
\begin{array}{ll}
\big(\widehat{\partial} \mu(\bar{x}), 0\big) & \subset \widehat{\partial}
\big(\varphi(\cdot) + \delta(\cdot, \gr G)\big) (\bar x, \bar y)\\
&\subset{\partial}\big( \varphi(\cdot) + \delta(\cdot, \gr G)\big)
(\bar x, \bar y).
\end{array}
\end{equation}
Observing that $\partial\delta(s;S)=N(s;S)$ for any set $S$ with $s\in S$  and using
\begin{align*}
\ep (\varphi + \delta_{\gr G}) = \ep \varphi \cap [\gr G \times \mathbb{ R}]\quad\text{and}\quad  (\varphi + \delta_{\gr G})(\bar x, \bar y)=\varphi(\bar x,\bar y),
\end{align*}
we therefore arrive at the equality
\begin{align*}
N \big( (\bar{x}, \bar{y}, (\varphi + \delta_{\gr G})(\bar x, \bar y)) ;
\ep \varphi\cap [\gr G\times \mathbb{R}] \big)\\= N \big( (\bar{x}, \bar{y}, \varphi (\bar{x}, \bar{y}));
\ep \varphi\cap [\gr G\times \mathbb{R}]\big).
\end{align*}
Since assumption $(\cal A)$ is satisfied, the result of \cite[Theorem~3.1]{HuxuAn} ensures that
\begin{align*}
&N \big( (\bar{x}, \bar{y}, \varphi (\bar{x}, \bar{y}));
\ep \varphi\cap [\gr G\times \mathbb{R}] \big)\\&=N \big( (\bar{x}, \bar{y}, \varphi (\bar{x}, \bar{y})\big);
\ep \varphi)
+N \big( (\bar{x}, \bar{y}, \varphi (\bar{x}, \bar{y}));
\gr G\times \mathbb{R}\big).
\end{align*}
Combining the latter with \eqref{1932024_eq:1} and \eqref{1932024_eq:2} brings us to
\begin{align*}
\begin{cases}
\big( \kappa \widehat{\partial} \mu(\bar{x}), 0 \big) & \subset \nabla
\psi (\bar x, \bar y) + \kappa{\partial} \varphi (\bar x, \bar y) + N \big(
(\bar x, \bar y); \gr G \big), \\
\big( \widehat{\partial} \mu (\bar{x}), 0 \big) & \subset {\partial}
\varphi (\bar x, \bar y) + N \big((\bar x, \bar y); \gr G \big).
\end{cases}
\end{align*}
Employing finally Lemma~\ref{Lem:Lipsch} under the assumed constraint qualification \eqref{BCQ}, we readily get the existence of $u^* \in \widehat{\partial}\mu(\bar{ x})$ together with multipliers $\lambda_i$ and $\beta_i$ satisfying all the conditions in \eqref{NOC:1}--\eqref{CSC} and thus complete the proof.
\end{proof}

Now we present two remarks comparing Theorem~\ref{NOC:Asplund} with previous results in this direction and discussing the major assumptions imposed.

\begin{remark}[\bf discussions on optimality conditions]\label{compar} 
{\rm Several types of necessary optimality conditions for optimistic bilevel programs have been obtained in the literature; see, e.g., \cite{DMZ2012,DZ,M2018,Dempe2020,Ye} with further references and discussions. The optimality conditions derived therein are all in finite-dimensional spaces, while Theorem~\ref{NOC:Asplund} is established in general Asplund space setting. The closest statement to Theorem~\ref{NOC:Asplund} is given in \cite[Theorem~7.9.3]{Dempe2020}, where the same necessary optimality conditions \eqref{NOC:1}--\eqref{CSC} are also obtained while under essentially weaker assumptions even in finite dimensions. Indeed, we do not require here, in contrast to \cite{Dempe2020}, that the lower-level cost $\varphi$ is locally Lipschitzian and that the solution mapping $M$ is inner semicontinuous. Moreover, our assumption (${\cal A}$) significantly improves the qualification conditions in \cite[Theorem~7.9.3]{Dempe2020}. Observe finally that Theorem~\ref{NOC:Asplund} is free of SNC/PSNC assumptions (see \cite{Mor2006}), which are usually imposed in deriving necessary optimality conditions for non-Lipschitzian infinite-dimensional problems.}
\end{remark}

The next remark discusses the two major assumptions of Theorem~\ref{NOC:Asplund}.

\begin{remark}[\bf discussing major assumptions]\label{assump}  $\,${\rm 

\begin{itemize}
\item[\bf(i)] The metric qualification condition \eqref{Q2} for limiting subgradients has been well recognized and largely employed in variational analysis and optimization; in particular, it significantly improves other qualification conditions used in subdifferential calculus and optimization theory; see, 
e.g., \cite{HuxuAn,Ioffe,NgaiThera,Book_Penot,Thibault} and compare with \cite{Mor2006}.

\item[\bf(ii)] Regarding the condition $\widehat{\partial}\mu(\bar x)\ne\emptyset$, observe first that this assumption holds when $\mu$ is convex, but it is way too much to assume. In particular, it follows from \cite[Propositions~4.4 and 4.6]{Book_Penot} that if $X$ is finite-dimensional and $\mu(\cdot)$ is {\em tangentially convex} as in \cite{Book_Penot}, then we have $\widehat{\partial}\mu(\bar x)\ne\emptyset$ {\em if and only if} the function $\mu(\cdot)$ is {\em calm}  at $\bar x$ in the sense of \eqref{calm}. Furthermore, the {\em lower regularity} of $\mu(\cdot)$ at $\bar x$, in the sense that $\widehat{\partial}\mu(\bar x)=\partial\mu(\bar x)$, ensures the nonemptiness of $\widehat{\partial}\mu(\bar x)$ in Asplund spaces under a certain mild SNC property; see \cite[Corollar~2.24]{Mor2006}. The latter class of functions is sufficiently broad including, e.g., maximum, semiconvex, lower-${\cal C}^1$ functions, etc.; see the book \cite{Thibault} with many references therein.
\end{itemize}}
\end{remark}

We conclude this section with the following simplification of Theorem~\ref{NOC:Asplund} in the case of bilevel programs with differentiable data. Recall that a function $f\colon Z\to\mathbb\R$ defined on a normed space $Z$ is {\em strictly differentiable} at $\bar z$ with the strict derivative $\nabla f(\bar z)$ if 
\begin{equation*}
\lim_{z,u\to\bar z}\frac{f(z)-f(u)-\langle\nabla f(\bar z),z-u\rangle}{\|z-u\|}=0.
\end{equation*}
It is well known that this notion lies between Fr\'echet differentiable of $f$ at $\bar z$ and continuous differentiability of $f$ around this point. If all the constraint functions $g_i,\;i=1,\ldots,p$, in \eqref{SLMP} are strictly differentiable at $(\bar x,\bar y)$, that then the \textit{qualification condition} \eqref{BCQ} reduces to the classical {\em Mangasarian-Fromovitz constraint qualification} (MFCQ). 

\begin{corollary}[\bf bilevel programs with differentiable data]\label{cor:smooth}
 Let $(\bar x, \bar y)$ be a local optimal solution to the bilevel program
 \eqref{SLMP} under the fulfillment of $({\cal A})$. Assume that the functions $g_i$, $i=1,\ldots,p$, and $\varphi$ are strictly differentiable at $(\bar x,\bar y)$, that $\psi$ is Fr\'echet differentiable at $(\bar x, \bar y)$, that \eqref{SLMP} is partially calm at $(\bar x, \bar y)$ with modulus $\kappa>0$, that MFCQ holds at $(\bar x,\bar y)$, and that $\widehat{\partial} \mu (\bar x)\ne\emptyset$. Then there exist  multipliers $\lambda_i$ and $\beta_i$ for $i=1, \ldots,p$ such that
\begin{equation*}
\begin{array}{ll}
\displaystyle\nabla_x \psi (\bar{x}, \bar{y}) + \sum_{i=1}^{p} \kappa \left( \lambda_i - \beta_i \right) \nabla_x g_i (\bar{x}, \bar y) = 0,\\
\nabla_y\psi(\bar{ x},\bar{ y})+
\displaystyle\sum_{i=1}^{p} \kappa \left( \lambda_i 
- \beta_i \right) \nabla_y g_i(\bar{ x},\bar y) = 0,\\
\kappa^{-1} \nabla_y \psi (\bar{ x}, \bar{ y}) + \nabla_y\varphi (\bar{ x},\bar{ y}) +
\displaystyle\sum_{i=1}^{p} \lambda_i \nabla_y g_i (\bar{x}, 
\bar y) = 0,\\
\nabla_y\varphi(\bar{ x},\bar{ y})+
\displaystyle\sum_{i=1}^{p}\beta_i 
\nabla_y g_i(\bar{ x},\bar y)=0,\\
\lambda_i \ge 0,\;\beta_i \ge 0,\;\lambda_i g_i (\bar{x},
\bar{y}) = \beta_i g_i (\bar{x}, \bar{y}) = 0\;\mbox{ as }\;i = 1,\ldots, p.
\end{array}
\end{equation*}	
\end{corollary}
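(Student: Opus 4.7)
The plan is to deduce this corollary from Theorem~\ref{NOC:Asplund} by specializing the limiting-subdifferential conclusions to the smooth setting. First I would check that all the hypotheses of Theorem~\ref{NOC:Asplund} are in force: strict differentiability of each $g_i$ at $(\bar x,\bar y)$ forces $g_i$ to be locally Lipschitzian around that point, so the Lipschitzian hypothesis on the constraint functions is met; strict differentiability of $\varphi$ gives (much more than) the lower semicontinuity needed; and Fr\'echet differentiability of $\psi$, partial calmness with modulus $\kappa$, the metric qualification $({\cal A})$, and the nonemptiness of $\widehat{\partial}\mu(\bar x)$ are assumed directly. As mentioned right before the statement of the corollary, under strict differentiability of the $g_i$'s the qualification~\eqref{BCQ} reduces to the classical MFCQ, so MFCQ at $(\bar x,\bar y)$ is exactly what one needs to activate Theorem~\ref{NOC:Asplund}.

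Applying that theorem, I obtain $u^{*}\in\widehat{\partial}\mu(\bar x)$, together with nonnegative multipliers $\lambda_i,\beta_i$ satisfying the complementary slackness~\eqref{CSC} and the inclusions~\eqref{NOC:1} and \eqref{NOC:2}. Because $\varphi$ and each $g_i$ are strictly differentiable at $(\bar x,\bar y)$, their limiting subdifferentials collapse to singletons $\partial\varphi(\bar x,\bar y)=\{\nabla\varphi(\bar x,\bar y)\}$ and $\partial g_i(\bar x,\bar y)=\{\nabla g_i(\bar x,\bar y)\}$, so those inclusions become the equalities
\begin{align*}
(u^{*},0)&=\nabla\varphi(\bar x,\bar y)+\sum_{i=1}^{p}\beta_i\,\nabla g_i(\bar x,\bar y),\\
(u^{*},0)&=\kappa^{-1}\nabla\psi(\bar x,\bar y)+\nabla\varphi(\bar x,\bar y)+\sum_{i=1}^{p}\lambda_i\,\nabla g_i(\bar x,\bar y)
\end{align*}
in $X^{*}\times Y^{*}$.

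Next I would split each of these identities into its $X^{*}$- and $Y^{*}$-components. The $Y^{*}$-components are read off immediately and yield the third and the fourth displayed equations of the corollary. To obtain the first two, I would subtract the first identity from the second, which cancels $u^{*}$ in the $X^{*}$-slot and $\nabla\varphi(\bar x,\bar y)$ in both slots, giving
\begin{equation*}
\kappa^{-1}\nabla\psi(\bar x,\bar y)+\sum_{i=1}^{p}(\lambda_i-\beta_i)\,\nabla g_i(\bar x,\bar y)=0;
\end{equation*}
multiplying by $\kappa$ and splitting coordinates produces the first two displayed equations. Combining these with the signs and complementarity already delivered by~\eqref{CSC} completes the argument.

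The only genuine obstacle is the verification that the hypotheses of Theorem~\ref{NOC:Asplund} are fulfilled in the smooth setting; but this is essentially bookkeeping, the bridge between the nonsmooth~\eqref{BCQ} and the smooth MFCQ being the one substantive point, and that reduction the authors have flagged in the paragraph preceding the corollary.
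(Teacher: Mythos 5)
Your proposal is correct and follows the same route as the paper: both specialize Theorem~\ref{NOC:Asplund}, using that strict differentiability gives local Lipschitz continuity and collapses the limiting subdifferentials $\partial\varphi(\bar x,\bar y)$ and $\partial g_i(\bar x,\bar y)$ to the singletons $\{\nabla\varphi(\bar x,\bar y)\}$ and $\{\nabla g_i(\bar x,\bar y)\}$, so that \eqref{NOC:1}--\eqref{CSC} become the stated equations. The paper merely states this reduction without the componentwise bookkeeping, which you carry out correctly (subtracting the two identities and multiplying by $\kappa$ yields the first two equations, and the $Y^*$-components give the other two).
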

\begin{proof} Recall that the limiting subdifferential of a strictly differentiable function reduce to its strict derivative, and that any such function is locally Lipschitzian around the point in question. Thus the necessary optimality conditions formulated in the corollary follow from conditions \eqref{NOC:1}--\eqref{CSC} of Theorem~\ref{NOC:Asplund}.
\end{proof}

\section{Conclusions}\label{sec:7}

The paper provides a systematic study of regular subgradients and their singular counterparts for a general class of marginal functions defined on normed spaces with applications to variational calculus rules for such constructions. Further applications are given, together with using the corresponding limiting normals and subgradients, to deriving necessary optimality conditions for the optimistic model of bilevel programming in Asplund spaces via the marginal function approach. Although our developments are given in infinite-dimensional spaces, the obtained results are new in finite dimensions. In the future research, we will continue the lines of this study with applying the newly introduced nearly-isolated calmness properties to other aspects of variational analysis and optimization and aiming, in particular, at the pessimistic model of bilevel programming.

\bigskip

\noindent {\bf Acknowledgements:} 
The authors wishes to thank both reviewers for their comments, remark and criticism 
that helped to improve the quality of the paper.

\end{document}